\documentclass[10pt]{article}

\usepackage{amsmath,amssymb,amsthm,bm,mathrsfs}
\usepackage{hyperref}
\usepackage{enumerate}
\usepackage[colorinlistoftodos,bordercolor=orange,backgroundcolor=orange!20,linecolor=orange,textsize=normalsize]{todonotes}

\title{Duality attainment and strict feasibility of the generalized moment problem and its relaxations}
\author{Sami Halaseh\footnote{LAAS-CNRS, 7 av du Colonel Roche, 31031 Toulouse Cedex 4, France.} \footnote{Fachbereich Mathematik und Statistik, Universit\"at Konstanz, Germany.} \and   Victor Magron\footnotemark[1] \footnote{Toulouse Mathematics Institute,  118 route de Narbonne, 31062 Toulouse Cedex 9, France.} \and Mateusz Skomra\footnotemark[1]}

\newtheorem{theorem}{Theorem}[section]
\newtheorem*{theorem*}{Theorem}
\newtheorem{lemma}[theorem]{Lemma}
\newtheorem{proposition}[theorem]{Proposition}
\newtheorem{corollary}[theorem]{Corollary}
\newtheorem{assumption}[theorem]{Assumption}
\newtheorem{remark}[theorem]{Remark}
\newtheorem{example}[theorem]{Example}
\numberwithin{equation}{section}

\DeclareMathOperator{\proj}{proj}
\DeclareMathOperator{\cc}{cc}
\DeclareMathOperator{\cs}{cs}
\DeclareMathOperator{\Span}{span}
\DeclareMathOperator{\st}{s.t.}
\DeclareMathOperator{\QM}{QM}
\DeclareMathOperator{\cone}{cone}
\DeclareMathOperator{\sep}{SEP}

\newcommand{\di}{\mathrm{d}}
\newcommand{\Idiv}{\mathrm{I}}
\newcommand{\primal}{\mathrm{P}}
\newcommand{\dual}{\mathrm{D}}
\newcommand{\red}{\mathrm{red}}
\DeclareMathOperator{\relint}{relint}
\DeclareMathOperator{\lin}{lin}
\DeclareMathOperator{\aff}{aff}
\DeclareMathOperator{\Real}{Re}
\DeclareMathOperator{\Imaginary}{Im}
\DeclareMathOperator{\rank}{rank}
\DeclareMathOperator{\Tr}{Trace}
\DeclareMathOperator{\LT}{LT}
\DeclareMathOperator{\lex}{lex}
\DeclareMathOperator{\grlex}{grlex}
\newcommand{\ib}{\mathbf{i}}
\newcommand\R {\mathbb{R}}
\newcommand\g {\mathbf{g}}
\newcommand\p {\mathbf{p}}
\newcommand{\Ip}{I(\mathbf{p})}
\newcommand\C {\mathbb{C}}
\newcommand\D {\mathcal{D}}
\newcommand\N {\mathbb{N}}
\newcommand{\Gb}{\mathcal{G}}
\newcommand\mom {z}
\newcommand{\quantum}{\mathcal{Q}}
\newcommand{\qu}{x}
\newcommand{\qv}{y}

\begin{document}
\maketitle

\begin{abstract}
The generalized moment problem (GMP) is an infinite dimensional linear problem over the cone of finite nonnegative Borel measures.  
When a GMP instance involves finitely many polynomial moment constraints, moment/sum-of-squares hierarchies provide a sequence of bounds converging to the optimal value. 
We consider GMP instances with measures supported over a compact basic semialgebraic set $X$. 
We study 
the case when $X$ has nonempty interior, and 
the case when $X$ is the vanishing set of prescribed polynomials forming a Gröbner basis of the ideal they generate, which we assume is real radical. 
Under a relative interior assumption, we show attainment of the infinite dimensional dual problem, and attainment of each associated finite dimensional sum-of-squares strengthening. For the latter we present two disjoint proofs. The first is obtained by adapting results regarding the closedness of quadratic modules, and the second builds on Csiszár’s work on exponential density constructions to find a strictly feasible measure. 
Finally, we discuss the special case where $X$ is the product of spheres, and applications of our results to GMP instances arising from tensor optimization and quantum information theory.

\textbf{Keywords:}  generalized moment problem, moment/sum-of-squares hierarchies, duality attainment

\textbf{MSC:} 90C22, 90C23
\end{abstract}

\section{Introduction, notation, and problem statement}

The generalized moment problem (GMP) is an infinite dimensional linear program (LP) over the cone of finite nonnegative Borel measures supported on a set $X \subset \R^n$. 
Given $m \in \N$, $b \in \R^m$, and polynomials $f, h_1,\dots,h_m\in\R[x_1,\dots,x_n] =\R[x]$, we consider in this paper the following GMP instance:
\begin{equation}\label{eq:gmp_primal}\tag{GMP-P}
\inf_{\mu} 
\left\{
    \int f \, d\mu \,
    \colon
    \int h_i \, d\mu = b_i, \; i=1,\dots,m, 
    \; \mu \in \mathcal{M}_+(X)
\right\},
\end{equation}
where  $\mathcal{M}_+(X)$ is the set of finite nonnegative Borel measures supported on $X$. To ensure that each feasible solution of \eqref{eq:gmp_primal} has bounded mass, we will always assume in the sequel that $h_1 \equiv1$ and $b_1>0$. 

This framework encompasses classical moment problems, rational function optimization, as well as various applications arising from tensor optimization and quantum information theory. An important special case is polynomial optimization, where a polynomial $f$ is minimized over a  semialgebraic set $X$. This corresponds to a GMP instance with a single constraint $(m = 1)$ given by $h_1\equiv 1$ and $b_1=1$. 
The moment/sum-of-squares (moment/SOS) hierarchy of semidefinite relaxations introduced by Lasserre \cite{lasserre2001global} provides a sequence of bounds converging from below to the minimum of $f$ on $X$. 
Each relaxation is a semidefinite program \cite{vandenberghe1996semidefinite}, i.e., it involves a linear objective function and linear matrix inequality constraints. 
The task of computing the optimal value of such semidefinite programs can be done efficiently via primal-dual interior-point methods. 
These semidefinite relaxations have become a cornerstone of modern polynomial optimization. This approach can be extended to a general GMP with polynomial constraints, see \cite{lasserre2008semidefinite}. 
General convergence results of the moment/SOS hierarchy for polynomial GMP instances under broad assumptions were established in \cite{tacchi2022convergence}.
For more details on applications of the GMP, especially with polynomial input data, see the monograph \cite{lasserre_momentsandapps_2010}. 

The dual of \eqref{eq:gmp_primal} is the following infinite dimensional LP over nonnegative polynomials:
\begin{equation}\label{eq:gmp_dual}\tag{GMP-D}
\sup_{\lambda} \left\{ \sum_{i=1}^m b_i \lambda_i \colon f - \sum_{i=1}^m h_i\lambda_i \geq 0 \text{ on } X,\; \lambda \in \R^m \right\}.
\end{equation}
We say that \eqref{eq:gmp_primal} has \emph{duality attainment} when the supremum in \eqref{eq:gmp_dual} is actually a maximum. Given $h_1, \dots, h_m$ as above, we define the convex cone as
\begin{equation}\label{eq:K_cone}
K := \left\{ \left(\int_X h_i \di \mu\right)_{i=1,\dots,m} \mid  \mu \in \mathcal{M}_+(X) \right\}.
\end{equation}
A crucial assumption for our result is the relative interiority of the $b$ vector, i.e., that $b\in \relint(K)$. 

\subsection{Contributions}\label{sec:contributions}

Our first main result shows that duality attainment holds for the infinite dimensional GMP.
\begin{theorem}
\label{th:gmp_int}
Assume that $h_1\equiv1$, $b_1 > 0$ and that $b = (b_1,\dots,b_m)$ is in the relative interior of $K$. 
Then the dual program \eqref{eq:gmp_dual} attains its optimal value. 
\end{theorem}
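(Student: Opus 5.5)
The strategy is to prove attainment through a finite-dimensional conic duality argument: first show there is no duality gap and the primal value is finite, then use the relative interior assumption to show that a maximizing sequence of multipliers can be taken bounded, and extract a convergent subsequence. Throughout, $X$ is taken to be compact, as in the setting of the paper. Then $K$ is a convex cone in $\R^m$, and every polynomial is bounded on $X$. Since $h_1\equiv 1$, any feasible $\mu$ satisfies $\mu(X)=b_1$, so $\int f\,d\mu$ is bounded below by $b_1\min_X f$. Moreover $b\in\relint K\subseteq K$ gives primal feasibility. Hence the primal value $p^*$ is finite.

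\textbf{Step 1 (the dual is feasible and bounded).} Take $\lambda=(\min_X f,0,\dots,0)$; it is dual feasible. Weak duality gives $\sum b_i\lambda_i=\int \sum h_i\lambda_i\,d\mu\le\int f\,d\mu$ for any feasible pair. Therefore the dual value $d^*$ is finite and $d^*\le p^*$.

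\textbf{Step 2 (reduce to the linear span of $K$).} Let $L=\Span K$. The dual objective and the constraint data only see $\lambda$ through its action on $L$, in the following sense. If $\lambda\in L^\perp$, then $\int\sum_i\lambda_ih_i\,d\mu=0$ for all $\mu\in\mathcal M_+(X)$. Taking Dirac measures $\delta_x$ with $x\in X$ shows $\sum\lambda_ih_i\equiv0$ on $X$. Also $b\in K\subseteq L$ gives $\langle b,\lambda\rangle=0$. So adding an element of $L^\perp$ to $\lambda$ changes neither feasibility nor the objective. We may therefore restrict the dual to $\lambda\in L$.

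\textbf{Step 3 (boundedness of a maximizing sequence; the key step).} Let $\lambda^{(k)}\in L$ be feasible with $\langle b,\lambda^{(k)}\rangle\to d^*$. Suppose, for contradiction, that $\|\lambda^{(k)}\|\to\infty$. Normalize and pass to a subsequence so that $\lambda^{(k)}/\|\lambda^{(k)}\|\to\bar\lambda\in L$ with $\|\bar\lambda\|=1$.

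Divide the feasibility condition $f-\sum_i\lambda^{(k)}_ih_i\ge0$ on $X$ by $\|\lambda^{(k)}\|$ and let $k\to\infty$. Because $f$ is bounded on $X$, this gives $\sum_i\bar\lambda_ih_i\le0$ on $X$. Integrating against any $\mu$, we get $\langle y,\bar\lambda\rangle\le0$ for all $y\in K$.

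On the other side, $\langle b,\lambda^{(k)}\rangle/\|\lambda^{(k)}\|\to0$, since the numerator converges to the finite number $d^*$. Hence $\langle b,\bar\lambda\rangle=0$.

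Now use $b\in\relint K$ and the fact that $\aff K=L$ (as $K$ is a cone containing $0$). For every $y\in K$ we have $b+\epsilon(b-y)\in K$ for some small $\epsilon>0$. This gives $\langle b+\epsilon(b-y),\bar\lambda\rangle\le0$, which simplifies to $\langle y,\bar\lambda\rangle\ge0$. Combined with the previous inequality, $\langle y,\bar\lambda\rangle=0$ for all $y\in K$. Thus $\bar\lambda\in L^\perp\cap L=\{0\}$, contradicting $\|\bar\lambda\|=1$. I expect this step to be the main obstacle, specifically the correct handling of the relative interior, which is why the reduction to $L$ in Step 2 is done first.

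\textbf{Step 4 (conclusion).} The sequence $\lambda^{(k)}$ is bounded, so a subsequence converges to some $\lambda^*$. The condition $f-\sum\lambda_ih_i\ge0$ pointwise on $X$ is closed in $\lambda$, so $\lambda^*$ is feasible. Its objective value is $\langle b,\lambda^*\rangle=d^*$, so the supremum in \eqref{eq:gmp_dual} is attained. This argument uses neither compactness beyond the boundedness of $f$ on $X$ nor a zero duality gap.
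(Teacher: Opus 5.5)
Your proof is correct. It shares two parts with the paper's proof: the reduction to $\lin(K)$ at the start and the closedness of the feasible set at the end. The boundedness step is argued differently.

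The paper gives a direct, quantitative bound. For an $\eta$-optimal feasible $\lambda\in\lin(K)$, it perturbs the data to $b'=b+\varepsilon\lambda/\|\lambda\|$. This point stays in $K$ because the $\varepsilon$-ball around $b$ within $\lin(K)$ lies in $K$. The paper then represents $b'$ by an atomic measure via Tchakaloff's theorem (Proposition~\ref{pr:tchakaloff}) and, in effect, applies weak duality at $b'$. This yields $\|\lambda\|\le\bigl((b_1+\varepsilon)f_{\max}-b_1f_{\min}+\eta\bigr)/\varepsilon$.

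You instead argue by contradiction with a normalized limit direction $\bar\lambda$. You show that $\sum_i\bar\lambda_ih_i\le 0$ on $X$ and $\langle b,\bar\lambda\rangle=0$. The prolongation characterization of the relative interior then forces $\bar\lambda\in\lin(K)^\perp\cap\lin(K)=\{0\}$. In effect you prove on the fly the characterization $\langle b,\lambda\rangle>0$ for all $\lambda\in K^*\setminus\lin(K)^\perp$, which is quoted in Remark~\ref{rk:gmp_int}.

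What each route buys:
\begin{itemize}
\item Your route is shorter and needs neither Tchakaloff's theorem, nor $f_{\max}$, nor an explicit radius $\varepsilon$.
\item As you note, it also needs no zero-duality-gap result. The paper invokes Theorem~\ref{th:nogap_gmp}, but its argument does not really need it either, since $(f_{\min},0,\dots,0)$ is already dual feasible.
\item The paper's version gives an explicit bound on near-optimal multipliers, in terms of the distance from $b$ to the relative boundary of $K$ and the range of $f$ on $X$. That bound is re-quoted in the proof of Theorem~\ref{th:attainSOS1} for the SOS strengthenings.
\item Your argument transfers to that setting too, because every SOS-feasible $\lambda$ is feasible for \eqref{eq:gmp_dual}, but only qualitatively.
\end{itemize}

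A minor phrasing point: rather than assuming $\|\lambda^{(k)}\|\to\infty$, assume the maximizing sequence is unbounded and pass to a subsequence with $\|\lambda^{(k)}\|\to\infty$. That subsequence is still maximizing.
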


The above generalizes the duality attainment result of Nie in \cite[Proposition 3.6]{nie_atruncKprob_2014}, which we discuss further in Remark \ref{rk:gmp_int}. Theorem \ref{th:gmp_int} also complements the result by de Klerk and Laurent \cite[Theorem 1]{klerk2019survey}, which we state in our setting in Theorem \ref{th:nogap_gmp}.

Our second main result shows that duality attainment holds for the finite dimensional semidefinite relaxations of \eqref{eq:gmp_primal}, which we now introduce. For further details, see \cite{laurent_survey_2009,lasserre_momentsandapps_2010,nie2023moment} for thorough expositions.

When discussing relaxations, we always make the following assumption:
\begin{assumption}
\label{hyp:ball}
Let $p_1,\dots,p_s,g_1\dots,g_{\ell}$ be  polynomials in $\R[x]$. The set $X$ is a basic compact semialgebraic set with one of the two following descriptions:
\begin{enumerate}[(i)]
\item $X = \big\{x\in \R^n \mid p_1(x),\dots,p_s(x) =0 \;\; \text{and}\;\; g_1(x)\geq 0 \big\}$
\item $X = \big\{x\in \R^n \mid g_1(x),\dots,g_\ell(x) \geq 0\big\},$
\end{enumerate}
where $g_1= R - \|x\|^2$ for some $R>0$, such that, in (i), $V_{\R}(\Ip) := \{x\in \R^n \mid p_1(x),\dots,p_s(x) =0\} \subset \big\{x\in \R^n \mid  g_1(x) = R - \|x\|^2 > 0\big\}$.
\end{assumption}

We introduce the notions needed to formulate the relaxation of \eqref{eq:gmp_primal}. Given $t \in \N$ and a multisequence $\mom \in \R^{\N^n_{2t}}$, we define the \emph{Riesz functional} of $\mom$ as
\[\begin{aligned}
L_{\mom}: \R[x]_{2t}  &\longrightarrow \;\R \\
 \sum_{\alpha} p_{\alpha} x^{\alpha} \; &\longmapsto \; \sum_{\alpha} p_{\alpha} \mom_{\alpha}.
\end{aligned}\]
The \emph{truncated pseudo-moment matrix} $M_t(\mom)$ associated to $\mom$ is defined by 
\[
M_t(\mom) := \big(L_{\mom}(x^{\alpha+\beta})\big)_{\alpha,\beta \in \N^n_t} = (\mom_{\alpha+\beta})_{\alpha,\beta \in \N^n_t}.
\]
Furthermore, given a polynomial $k = \sum_{\gamma} k_{\gamma} x^{\gamma} \in \R[x]_{2t}$, set $t_k = \lceil\frac{\deg k}{2} \rceil$. We define the \emph{shifted sequence}
\begin{equation}\label{eq:shifted_seq}
(k\mom)_{\alpha} = L_\mom(k x^{\alpha}) = \sum_{|\gamma|\leq \deg(k)} k_{\gamma} L_{\mom}(x^{\alpha+\gamma}) = \sum_{\gamma} k_{\gamma} \mom_{\alpha+\gamma}, \;\; \text{with}\; \alpha\in \N^n_{2(t-t_k)}.
\end{equation}
This in turn allows us to define the \emph{truncated localizing matrix} associated to $k$ and $\mom$ as the pseudo-moment matrix of the sequence $kz$,
and is written as $M_{(t-t_k)}(k \mom)$. Thus, 
\[
M_{(t-t_k)}(k \mom) = \big(L_\mom(k x^{\alpha+\beta})\big)_{\alpha,\beta\in \N_{(t-t_k)}^n}.
\]

The following notions are needed for the formulation of the strengthenings of \eqref{eq:gmp_dual}. Set $\g=\{g_1,\dots,g_{\ell}\}$, and $t_{g_j} = \lceil\frac{\deg g_j}{2} \rceil$. 
With $t \geq \max \{ t_{g_1},\dots,t_{g_\ell} \}$, the \emph{truncated quadratic module} $\QM_{2t}(\g)$ generated by $\g$ is defined by
\[
\QM_{2t}(\g) = \left\{ \sigma_0 + \sum_{j=1}^{\ell} \sigma_j g_j \mid \sigma_0 \in \Sigma[x]_{2t}, \; \sigma_j \in \Sigma[x]_{2(t-t_{g_j})}, \;\; j=1,\dots,\ell, \right\},
\]
where $\Sigma[x]_{2t}$ is the set of sum-of-squares (SOS) polynomials of degree at most $2t$. The quadratic module generated by $\g$ is $\QM(\g) = \bigcup_{t \in \N} \QM_{2t}(\g)$. Similarly, set $\p = \{p_1,\dots,p_s\}$, and $t_{p_l} = \lceil\frac{\deg p_l}{2} \rceil$. The \emph{ideal} generated by $\p$ is denoted by $\Ip$, and its truncation is defined as
\begin{equation}\label{eq:trunc_ideal}
\langle \p \rangle_{2t} := \left\{\sum_{l=1}^s k_l p_l \mid \deg(k_l p_l) \leq 2 t\right\}.
\end{equation} 

With $f, h_1,\dots,h_m \in \R[x]$ and the above notation, define 
\begin{equation}
\label{eq:tmin}
t_{\min} := \max \left\lbrace t_{g_1},\dots,t_{g_\ell}, t_{p_1},\dots,t_{p_s}, \Big\lceil\frac{\deg h_1}{2} \Big\rceil, \dots , \Big\lceil\frac{\deg h_m}{2} \Big\rceil,  \Big\lceil\frac{\deg f}{2} \Big\rceil \right \rbrace.
\end{equation}
We now formulate the hierarchy of relaxations of \eqref{eq:gmp_primal}, indexed by $t \geq t_{\min}$,
\begin{equation}\label{eq:gmp_relax}\tag{GMP-P-rel}
\begin{aligned}
\primal_t = \; &\inf_{\mom}  \; \; L_{\mom}(f)\\
	& \st \;  L_{\mom}(h_i) = b_i, \;\; i=1,\dots,m \\
	& \qquad M_t(\mom)\succeq 0,\; M_{t-t_{g_j}}(g_j\mom) \succeq 0,\; M_{t-t_{p_l}}(p_l \mom) = 0, \\
	& \qquad j = 1,\dots,\ell,\;\;l=1,\dots,s,\;\;\mom \in \R^{\N^n_{2t}},
\end{aligned}
\end{equation}
where `$\succeq0$' denotes positive semidefiniteness, and, later, `$\succ 0$' denotes positive definiteness.
Since the cost function and constraints of \eqref{eq:gmp_relax} depend linearly on $\mom$, \eqref{eq:gmp_relax} is a \emph{semidefinite program}  \cite{vandenberghe1996semidefinite}. 
A solution $\mom$ is \emph{strictly feasible} if $M_t(\mom)\succ 0$ and  $M_{t-t_{g_j}}(g_j\mom) \succ 0$ ($j=1,\dots,\ell$).
The dual hierarchy of \eqref{eq:gmp_relax} is obtained by strengthening the nonnegativity constraints of \eqref{eq:gmp_dual}, and is given by the following sequence of semidefinite programs indexed by $t \geq t_{\min}$,
\begin{equation}\label{eq:dual_relax}\tag{GMP-D-str}
\begin{aligned}
\dual_t = &\sup_{\lambda \in \R^m}  \; \; \sum_{i=1}^m b_i \lambda_i \\
	& \st \; f - \sum_{i=1}^m  h_i \lambda_i \in  \QM(\g)_{2t}+\langle \p \rangle_{2t}.
\end{aligned}
\end{equation}
The problems above form a pair of primal-dual semidefinite programs. 
We now state our second main result, regarding the duality attainment of the finite dimensional relaxations. 
\begin{theorem}\label{th:attainSOS}
Let $X$ be a compact subset of $\R^n$ satisfying Assumption \ref{hyp:ball}. 
Suppose that either
\begin{enumerate}[(i)]
    \item The ideal $\Ip=(p_1,\dots,p_s)$ is real radical, and that $\p=\{p_1,\dots,p_s\}$ forms a Gröbner basis of $\Ip$ with respect to the graded lexicographic order, or
    \item $\p=\{0\}$ and $X$ has nonempty interior. 
\end{enumerate}
Assume further, that $h_1\equiv1$, $b_1 > 0$, and that $b = (b_1,\dots,b_m)$ is in the relative interior of $K$. 
Then the semidefinite program \eqref{eq:dual_relax} attains its optimal value for all $t \geq t_{\min}$.
\end{theorem}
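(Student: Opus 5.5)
The plan is to use standard conic duality for the primal--dual pair \eqref{eq:gmp_relax}/\eqref{eq:dual_relax}. If the primal semidefinite program has a strictly feasible point in the appropriate relative sense and its value is finite, then the dual attains its optimum. It therefore suffices to show two things for every $t \ge t_{\min}$: the value $\primal_t$ is finite, and there is a pseudo-moment sequence $\mom$ that is feasible for \eqref{eq:gmp_relax} with $M_t(\mom)\succ 0$ and $M_{t-t_{g_j}}(g_j\mom)\succ 0$. In case (i) positivity of $M_t(\mom)$ must be understood modulo the ideal, on the quotient $\R[x]_t/\langle \p\rangle_{2t}$.

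\textbf{Finiteness.} This follows from the ball constraint $g_1=R-\|x\|^2$. Because $h_1\equiv 1$, any feasible $\mom$ has $\mom_0=b_1$. The localizing constraint for $g_1$ then bounds all diagonal entries of $M_t(\mom)$ inductively in terms of $b_1$ and $R$, so the feasible set is bounded. Since $b\in K$, there is a measure $\mu$ with moments $b$, so the feasible set is also nonempty and $\primal_t$ is finite.

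\textbf{Constructing a strictly feasible point.} This is the main step. Take $\mu$ to be a measure on $X$ whose moment vector $(\int h_i\,d\mu)_i$ equals $b$ and which has a positive density with respect to Lebesgue measure on $X$ in case (ii), or with respect to the surface measure on the real variety $V_{\R}(\Ip)$ in case (i). The natural way to get one is a Csiszár-type exponential family construction. Consider densities $\exp(\sum_i \theta_i h_i)$ relative to a reference measure $\nu$ with full support. The gradient of the log-partition function $\theta\mapsto \log\int \exp(\sum_i\theta_i h_i)\,d\nu$ maps onto $\relint(K)$ intersected with the appropriate scaling hyperplane fixed by $b_1$. Because $b\in\relint(K)$, some $\theta$ realizes $b$ exactly.

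With such a $\mu$, set $\mom$ to be its truncated moment sequence. For a polynomial $q$ of degree at most $t$, $q^T M_t(\mom) q=\int q^2\,d\mu$. This is positive unless $q$ vanishes on the support of $\mu$.
\begin{itemize}
\item In case (ii) the support has nonempty interior, so $q=0$.
\item In case (i) the support is the real variety, so $q\in \Ip$ because the ideal is real radical. The Gröbner basis hypothesis for the graded order guarantees that $q$ then lies in $\langle\p\rangle_{2t}$ with degree bounds, so $q$ is zero in the truncated quotient.
\end{itemize}
The same argument applies to $\int q^2 g_j\,d\mu$, since $g_j>0$ on a dense subset of the support. In case (i) this uses $g_1>0$ on $V_{\R}(\Ip)$. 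The equality constraints $M_{t-t_{p_l}}(p_l\mom)=0$ hold automatically because $\mu$ is supported on the variety.

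\textbf{Expected obstacle.} I expect the main difficulty to be case (i). There one must formulate Slater's condition on the reduced space, by quotienting out the linear subspace forced to vanish by the ideal constraints. One must also check that the Gröbner basis property gives exactly the degree-bounded membership needed, so that the reduced moment matrix is genuinely positive definite. The exponential-density existence argument, in particular surjectivity onto $\relint(K)$ when the $h_i$ may be affinely dependent on $X$, also needs care. As a fallback I would instead prove closedness of $\QM_{2t}(\g)+\langle\p\rangle_{2t}$ directly, adapting known closedness results for truncated quadratic modules, and deduce attainment from that closedness together with $b\in\relint(K)$.
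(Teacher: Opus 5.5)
Your outline is the paper's second proof (Section~\ref{sec:GMP_relax_density} with Appendix~\ref{ap:algred}), and your fallback is its first proof (Section~\ref{sec:GMP_relax_marshall}). Two steps, however, need repair.

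\textbf{The reference measure.} Lebesgue measure on $X$, or ``surface measure'' on $V_{\R}(\Ip)$, need not have full support on $X$. A compact basic semialgebraic set with nonempty interior can have lower-dimensional pieces, and a real radical variety can have components of different dimensions. In that situation both uses of $\mu$ fail.

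For example, $g_1=4-x^2$ and $g_2=x^2(x-1)$ give $X=\{0\}\cup[1,2]$. For $h=(1,x)$ the vector $b=(1,\tfrac12)$ is interior to $K$. Yet any $\mu$ absolutely continuous with respect to Lebesgue measure on $X$ satisfies $\int x\,\di\mu\ge\mu(X)$, so no exponential density reaches $b$.

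Similarly, take a circle together with an isolated real point. A measure carried by the circle gives $\int\psi^2\,\di\mu=0$ for $\psi=x^2+y^2-1\notin\Ip$. So the reduced moment matrix is singular for $t\ge 2$.

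The fix is the ``full support'' you mention in your next sentence, made explicit as in the paper. Take a strictly positive probability measure such as $\sum_i 2^{-i}\delta_{x^{(i)}}$ over a dense sequence in $X$ (Lemma~\ref{le:strictly_pos}). Then:
\begin{itemize}
\item the convex support of $h_{\#}\nu$ is the convex hull of $h(X)$ (Lemma~\ref{le:conv_supp});
\item your log-partition/steepness argument becomes a valid substitute for the paper's $\Idiv$-projection route to $b/b_1$;
\item $\int q^2 g\,\di\mu=0$ forces $q^2g=0$ on all of $X$.
\end{itemize}
Also, in case (ii) $g_j$ need not be positive on a dense subset of $X$, as $g_2$ above shows. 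What you need, and what holds, is that $\{g_j>0\}$ contains a nonempty open subset of $X$.

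\textbf{Transferring attainment in case (i).} Your claim that ``finite value plus strict feasibility suffices'' is missing an ingredient here. A point that is strictly feasible only modulo the ideal gives, via SDP duality, attainment for the dual of the \emph{reduced} program. It does not a priori give attainment for \eqref{eq:dual_relax}.

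The paper handles this as follows.
\begin{itemize}
\item It writes the reduced program in coordinates $\hat\mom\in\R^{\mathcal{B}_{2t}}$ on $\R[x]_{2t}/\Ip_{2t}$, using grlex standard monomials (Lemma~\ref{lemma:bt}).
\item It bounds that program below via $\primal_t\le\primal_t^{\red}$, by extending each $\hat\mom$ to a feasible $\mom$.
\item In Proposition~\ref{pr:reduced_programs} it proves that the reduced dual has exactly the same feasible $\lambda$'s as \eqref{eq:dual_relax}. A reduced certificate $f-\sum_i\lambda_i h_i\equiv\sigma_0+\sum_j\sigma_j g_j$ modulo $\Ip_{2t}$ lifts to $\QM_{2t}(\g)+\langle\p\rangle_{2t}$ precisely because $\Ip_{2t}=\langle\p\rangle_{2t}$ (Lemma~\ref{le:truncated_ideal}).
\end{itemize}
This transfer is where the Gr\"obner hypothesis is essential. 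By contrast, positive definiteness of the reduced moment and localizing matrices needs only real radicality and the linear independence of standard monomials modulo $\Ip$, once you quotient by $\Ip_t$.
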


See Section \ref{sec:examples} for the definitions of a real radical ideal and a Gröbner basis. 
We provide two disjoint proofs of the above theorem, one by studying the dual program directly, and one by showing the existence of a Slater point for the primal program, which in this case, is a strictly feasible solution. We give context on how these proof techniques fit into the existing literature in Subsection \ref{ssec:related_works}. 

Theorem \ref{th:attainSOS} complements the following result by Tacchi, which we reformulate in our setting as follows.
\begin{proposition}[{\cite[Proposition 6]{tacchi2022convergence}}]
\label{prop:nogaprelax}
Suppose that $h_1 \equiv 1$, $b_1 > 0$ and that Assumption \ref{hyp:ball} holds. 
If \eqref{eq:gmp_relax} has a feasible solution, then there is no duality gap between \eqref{eq:gmp_relax} and \eqref{eq:dual_relax}. 
\end{proposition}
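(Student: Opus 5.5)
The plan is to show that the feasible set of \eqref{eq:gmp_relax} is nonempty and bounded, and then to invoke a standard conic-duality fact: for a semidefinite program whose feasible set is nonempty and bounded, there is no duality gap with its dual. The ball constraint $g_1 = R-\|x\|^2$ in Assumption \ref{hyp:ball} is what produces the boundedness. The normalization $h_1\equiv 1$, $L_\mom(1)=b_1$ then fixes the mass.

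\emph{Step 1 (bounded moments).} Let $\mom$ be feasible. Positive semidefiniteness of $M_{t-1}(g_1\mom)$ is equivalent to $L_\mom(g_1\sigma)\ge 0$ for every $\sigma\in\Sigma[x]_{2(t-1)}$. Take $\sigma=\|x\|^{2(k-1)}$, which is a sum of squares of degree $2(k-1)\le 2(t-1)$ for $1\le k\le t$. This gives
\[
L_\mom(\|x\|^{2k}) \le R\, L_\mom(\|x\|^{2(k-1)}),
\]
and by induction $L_\mom(\|x\|^{2k})\le R^k b_1$. Expanding $\|x\|^{2k}$ gives a sum, with positive coefficients, of the even moments $L_\mom(x^{2\beta})$, $|\beta|=k$. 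Each of these is a diagonal entry of $M_t(\mom)\succeq 0$ and hence nonnegative, so each is bounded by $R^kb_1$. A positive semidefinite matrix with bounded diagonal has all entries bounded ($|m_{\alpha\beta}|^2\le m_{\alpha\alpha}m_{\beta\beta}$), so every $\mom_\gamma$ with $|\gamma|\le 2t$ is bounded. The feasible set is closed and bounded, hence compact, and it is nonempty by hypothesis.

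\emph{Step 2 (trivial recession cone).} The same computation with $b_1$ replaced by $0$ handles the homogeneous system: $L_\mom(h_i)=0$ for all $i$, together with the semidefinite and ideal constraints, gives $L_\mom(\|x\|^{2k})\le R^k\cdot 0$. Hence every diagonal entry of $M_t(\mom)$ vanishes, and so $\mom=0$. The recession cone of the feasible set is therefore $\{0\}$. This is the form in which boundedness enters conic duality.

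\emph{Step 3 (no gap).} I would derive zero duality gap by a theorem of the alternative, rather than quoting a ready-made result. Consider the value function $v(c)=\inf\{L_\mom(c)\colon \mom\text{ feasible}\}$ for perturbed objectives $c$. It is finite everywhere because the feasible set is compact, so it is a concave, continuous function. Equivalently, since the recession cone is trivial, a Gordan-type alternative applied to the homogeneous system of Step 2 yields a strictly feasible point of \eqref{eq:dual_relax}. Concretely, there is some $\lambda$ such that $0-\sum_i\lambda_i h_i$ lies in the interior of $\QM(\g)_{2t}+\langle\p\rangle_{2t}$, interpreted in the quotient by the ideal part. Adding a large multiple of such a $\lambda$ to any $\lambda$ makes $f-\sum_i\lambda_i h_i$ an interior point as well, so the dual has a Slater point. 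Standard strong duality for conic programs (dual Slater point $\Rightarrow$ equal values, with primal attainment) then gives $\primal_t=\dual_t$.

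The main obstacle is Step 3. The equality constraints $M_{t-t_{p_l}}(p_l\mom)=0$ mean the relevant cone is not full-dimensional in $\R[x]_{2t}$. The alternative argument must therefore be carried out in the correct ambient space: either pass to the quotient $\R[x]_{2t}/\langle\p\rangle_{2t}$, or treat the ideal part as a linear subspace and apply the conic Farkas lemma with a relative-interior Slater condition. The point to verify is the following. Triviality of the recession cone of the primal feasible set is exactly the dual (polar) statement that the affine span of $\{h_i\}$ meets the interior of the strengthened cone modulo the ideal. I would check this by writing the separation argument explicitly in that quotient.
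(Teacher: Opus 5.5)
Your proposal is correct and is essentially the standard argument behind the cited result. Steps 1--2 reproduce the boundedness of feasible pseudo-moment sequences from \cite[Lemma 5]{tacchi2022convergence}, which the paper itself invokes in the proof of Theorem~\ref{th:attainSOS1}. Step 3 is the usual SDP fact that a nonempty bounded primal feasible set gives, via Gordan's alternative for the PSD cone, a dual point with positive definite Gram matrices, and hence no gap.

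The obstacle you flag is not a real one: $\QM_{2t}(\g)+\langle\p\rangle_{2t}\supseteq\Sigma[x]_{2t}$ is already full-dimensional in $\R[x]_{2t}$. You can therefore keep the ideal constraints among the linear equalities and apply the alternative directly to the PSD cone, with no quotient needed. The adjoint of $\mom\mapsto M_t(\mom)$ is surjective onto $\R[x]_{2t}$, which is exactly what makes your ``add a large multiple'' step work.
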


\paragraph{Significance of duality attainment and strict feasibility}

The theorems above address the important matter of duality attainment for the GMP, i.e., under what conditions does the GMP dual problem admit an optimal solution? 
While weak duality holds automatically, the existence of optimal dual solutions is delicate and depends on geometric properties of the underlying generalized moment cone. 
Dual attainment for polynomial GMP instances guarantees the existence of optimal polynomial certificates in the GMP dual.
On the practical side, one focuses on the moment/SOS hierarchy of semidefinite relaxations. 
Dual attainment at a given relaxation order implies that one can obtain an optimal sum-of-squares certificate with bounded coefficients. 
In most practical applications, these certificates correspond to explicit algebraic witnesses. 
Ensuring their existence is therefore essential both theoretically and computationally. 
A second matter is about strong feasibility. 
Modern semidefinite solvers typically implement primal-dual interior-point algorithms. 
These solvers tend to exhibit their most stable and predictable convergence behavior when the original program has a pair of primal-dual strictly feasible solutions. 
All together, strong feasibility and strong duality ensure that the iterates remain well-centered and that the duality gap decreases reliably, which is key to the practical efficiency of interior-point methods; see \cite[Section 7.1]{helmberg2002sdp} and \cite[Section 5]{vandenberghe1996semidefinite} for more details. 

\subsection{Related works and methodology}\label{ssec:related_works}

In Theorem \ref{th:gmp_int}, we show that duality attainment holds for the infinite dimensional GMP. This generalizes the result in \cite{nie_atruncKprob_2014}, where optimality conditions and dual attainment for the $A$-truncated moment problem are studied. There, Nie shows, under a similar relative interior assumption, that duality attainment is obtained for the infinite dimensional GMP. See Remark \ref{rk:gmp_int} for further discussion.

Theorem \ref{th:attainSOS} generalizes the duality attainment result \cite[Proposition 3.10]{baldi2025exact}, which is in the polynomial optimization context. The techniques we use to provide two proofs of Theorem \ref{th:attainSOS} fit into respective lines of research, and so, in the following, we discuss the two proof techniques, and their related works. We then discuss an algebraic reduction method common to both proofs, and the related works thereof.

\paragraph{Closedness of quadratic modules}
Our first proof of duality attainment of the relaxations comes from directly studying the dual semidefinite strengthenings. In that context, the feasible set is the quadratic module generated by the polynomials defining the support set. Hence, showing that the quadratic module is closed can be used to show duality attainment. Our result on duality attainment via the closedness of the quadratic module generalizes results from a line of work by Powers and Scheiderer in \cite[Proposition 2.6]{powers2001noncompact}, Marshall in \cite[Theorem 3.1]{marshall_optim_2003}, Schweighofer in \cite[Corollary 20]{schweighofer2005compact}, and Laurent in \cite[Section 6.2]{laurent_survey_2009}. Such an approach was used in the context of duality attainment and duality gaps in \cite[Section 2]{sekiguchi2013duality}. These results were also used to show duality attainment type results in special empty interior cases such as the vanishing sets of the gradient ideal in \cite[Theorem 8]{nie2006gradideal}, the KKT ideal \cite[Theorems 3.1 and 3.2]{demmel2007kktideal}, and more general ideals in \cite[Proposition 2.5]{nie2013varieties}. Our result extends these cases.

\paragraph{Truncated moment problems and entropy methods}
Our second proof of duality attainment of the relaxations comes from showing the existence of a strictly feasible point for the primal relaxation. This employs the strong duality of semidefinite programs.

Like our initial GMP instance, maximum entropy problems are often considered as \emph{partially-finite programs}, i.e., convex programs with a single infinite dimensional variable subject to a finite number of linear constraints. In fact, duality and existence results are available for such partially-finite programs. In particular, the entropy-based estimation problem has long been studied to recover a measure density from finitely many moment constraints. We mention some related works that highlight the deep connection between entropy maximization, interiority conditions, and duality attainment in measure theoretic optimization.

Attainment of the dual for such partially-finite programs is proved in \cite[Corollary 2.6]{borwein1991duality}, and the existence of a (unique) optimal density for the primal is derived in \cite[Theorem 4.8]{borwein1991duality}, the authors use an equivalent assumption to the relative interior assumption that we consider; see \cite[Lemma 3.3]{borwein1993partially}. Establishing these duality and existence results requires elaborate ingredients from (infinite dimensional) convex analysis, in particular the study of relevant Legendre--Fenchel transform domains and their subgradients. 

Works leading to comparable duality and density existence results rely on tools from probability theory, in particular the so-called \emph{maximal entropy method on the mean} (MEM), which is developed in \cite{dacunha1990maximum,gamboa1997bayesian} and is based on sequences of Bayesian problems. 
The MEM is a construction where the moment problem is approximated by a sequence of finite dimensional problems, which are obtained by a discretization of the underlying support space $X$. The MEM estimator is the limit of these discretized estimators. 

The case where $X$ is compact is also studied in \cite{blekherman2012truncated}. There, it is shown that for a measure $\mu$ on $X$, absolutely continuous with respect to the Lebesgue measure on $X$, any moment sequence in the interior of the moment cone has a representing measure $\nu$, which is absolutely continuous with respect to $\mu$. Moreover, the associated Radon--Nikodym derivative has an exponential form. Similarly to \cite{borwein1991duality}, the main proof ingredient used in \cite{blekherman2012truncated} is that the relative interior of the truncated moment cone is actually the domain of the Legendre--Fenchel transform associated with a certain convex function. 
Computing the value of the Legendre--Fenchel transform boils down to solving a maximum (Boltzmann--Shannon) entropy  problem.  

It turns out that this previously established result about the existence of a feasible exponential density for the GMP can be alternatively proved in a more concise fashion, under the relative interior assumption made in Theorem \ref{th:density_gmp}. The resulting density that we obtain is the same as in the above mentioned works \cite{borwein1991duality,gamboa1997bayesian,blekherman2012truncated}. Rather than relying on elaborate Legendre--Fenchel duality or Bayesian sequences, our approach solely leverages entropy based constructions and $\Idiv$-projection techniques inspired by Csiszár’s framework \cite{csiszar1975divergence,csiszar2001convex}. The existence of an exponential density feasible for the primal GMP is stated in Theorem \ref{th:density_gmp}, and can be seen as an extension of the result in \cite{blekherman2012truncated} beyond the monomial setting.  

We believe that this alternative self-contained proof is of independent value for modern researchers interested in theoretical and practical aspects of generalized moment problems, due to the fact that it proves the existence of a strictly feasible solution.

\paragraph{Algebraic reduction}
In both of the techniques above, dealing with the case of equality constraints, where the support set $X$ will have empty interior (see Assumption \ref{hyp:ball}(i)), requires the use of an algebraic reduction of the moment relaxation and its dual SOS strengthening. On the dual side, such a reduction amounts to finding SOS certificates in the quotient ring of the ideal. This places our work in a line of research regarding the exploitation of equality constraints to reduce the size of the finite dimensional programs. The technique of using an algebraic reduction was also employed in \cite{henrion2012innerapprox}.

In \cite{lasserre2002binary}, binary programming is modeled by using $x_i^2=x_i$ equality constraints. In \cite{parrilo2002finitesets}, finite varieties and certificates using their ideals are considered. Furthermore, a real radical assumption is made, and the need for a Gröbner basis and an associated quotient basis are briefly discussed. Later, in \cite{marshall_optim_2003}, a high level approach is presented for the algebraic reduction by an arbitrary ideal. This is contrasted by the detailed treatment of the algebraic reduction in the case of $X$ being zero dimensional and finite in \cite{laurent2007varieties}. In \cite{pena2008ideal}, the truncation of the ideal is considered as part of the set giving a positivity certificate, here the variety does not need to be finite. In \cite{vo2008equalities}, a heuristic is developed on how to use equality constraints to contract the semidefinite programs associated to the moment relaxations.

Reductions and certificates over special ideals and bases were also studied in the above mentioned works \cite{nie2006gradideal,demmel2007kktideal,nie2013varieties}, and also in \cite{bucero2016border}, which employs a reduction using the border basis. The reduction we make is in the style of the one in \cite[Section 2]{laurent2007varieties}, and generalizes the above mentioned ones. Furthermore, we give a level of detail, which, to the best of our knowledge, is not found in the literature, and hence facilitates the implementation of such reductions. Reducing the size of the semidefinite program can speed up computations, and, hence, the reduction we describe is of independent interest.

\subsection{Paper guide}

In Section \ref{sec:examples}, we discuss the necessity of the assumptions made in Theorem \ref{th:gmp_int} and Theorem \ref{th:attainSOS}. We present several counterexamples emphasizing that the relative interiority assumption is crucial to ensure dual attainment. In Section \ref{sec:GMP}, we prove Theorem \ref{th:gmp_int}: duality attainment for the infinite dimensional GMP. In Section \ref{sec:GMP_relax_marshall}, we prove Theorem \ref{th:attainSOS}: duality attainment for each finite dimensional relaxation of the GMP. We do this by adapting results by Marshall, from \cite{marshall_optim_2003,marshall_positive_2008}. In Section \ref{sec:GMP_relax_density}, we use results from \cite{csiszar1975divergence,csiszar2001convex} to find a Slater point for each finite dimensional relaxation of the GMP, which consists of finding a suitable positive measure. We then use this to give another proof of Theorem \ref{th:attainSOS}
 
In Section \ref{sec:apps} we present problems from the literature that admit GMP formulations, and apply
Theorem \ref{th:gmp_int} and Theorem \ref{th:attainSOS} to them. These problems come from tensor optimization, quantum information theory, and quantum optimal transport. First, in Subsection \ref{ssec:sphs}, we prove,  in Corollary \ref{co:attainSOS_sphs}, duality attainment in the special case of $X$ being the product of spheres. In Subsection \ref{ssec:apps_background}, we give background for the application problems, and discuss GMPs over complex variables. In Subsection \ref{ssec:apps_sphs}, we consider problems with GMPs over the product of spheres, which include best rank one approximation of a tensor, the DPS hierarchy, and the quantum Wasserstein distance. Finally, in Section \ref{ssec:apps_nonempty}, we consider positive symmetric tensor decomposition as a GMP instance over the unit ball. We then conclude in Section \ref{sec:conclusion}.

In Appendix \ref{ap:algred}, we present a detailed algebraic reduction of the GMP relaxations. This is used in Appendix \ref{ap:pfofthm} to complete the second proof strategy (started in Subsection \ref{ssec:GMP_relax_density_proof}). In Appendix \ref{ap:spheres_alg}, we show that in the special case of $X$ being the product of spheres, the necessary algebraic assumptions are satisfied.

\subsection*{Acknowledgements}
This work has been supported by the European Union’s HORIZON–MSCA-2023-DN-JD programme under the Horizon Europe (HORIZON) Marie Skłodowska Curie Actions, grant agreement 101120296 (TENORS). The authors are grateful to Fabrice Gamboa for pointing out relevant bibliographic references, and Jonas Britz for helpful discussions regarding the algebraic reduction.

\section{Necessity of assumptions}\label{sec:examples}

We discuss the necessity of the assumptions in our main results, Theorem \ref{th:gmp_int} and Theorem \ref{th:attainSOS}.

\paragraph{Gröbner basis assumption}

Recall that the \emph{ideal} generated by $\p$ is the set $\Ip =  \left\{ \sum_{i=1}^s k_i p_i \mid k_i \in \R[x] \right\}$. Let `$<_{\lex}$' be the lexicographical order on $\R[x]$ such that $x_n <_{\lex} x_{n-1} <_{\lex} \cdots <_{\lex} x_1$. We denote by `$<_{\grlex}$' the graded lexicographical order, and write $x^{\alpha} <_{\grlex} x^{\beta} $ if
\[
|\alpha| < |\beta| \quad \text{or}\quad \Big(|\alpha| = |\beta| \;\; \text{and} \;\; x^\alpha <_{\lex} x^\beta\Big)
\]
for exponent vectors $\alpha,\beta\in \N^n$. Henceforth, we fix the ordering on the monomials as the graded lexicographical order.
Given a polynomial $k = \sum_{\alpha} k_{\alpha} x^{\alpha} \in \R[x]$, the \emph{leading term} of $k$, denoted $\LT(k)$, is the term of $k$ indexed by the maximal $\beta$, with respect to $<_{\grlex}$.
The \emph{leading term ideal} of $\Ip$ is $\LT(\Ip) = \{\LT(k) \mid k \in \Ip \}$. 
A finite subset $\Gb$ of $\Ip$ is called a \emph{Gröbner basis} of $\Ip$ if $\LT(\Ip)=\LT(\Gb)$; that is, if the leading term  of every nonzero polynomial in $\Ip$ is divisible by the leading term of some polynomial in $\Gb$. For more details on Gr\"{o}bner bases, see \cite{cox_ideals_2015}.

As in \eqref{eq:trunc_ideal} we define the truncated ideal $\langle \p \rangle_{t}$ as 
\[
\langle \p \rangle_{t} := \left\{\sum_{l=1}^s k_l p_l \mid \deg(k_l p_l) \leq t\right\} \, .
\]
Note that another natural way to truncate the ideal would be to consider the set $\Ip_{t} := \Ip \cap \R[x]_{t}$. This set contains $\langle \p \rangle_{t}$, but we are only assured equality if the polynomials $p_1,\dots,p_s$ form a Gröbner basis of $\Ip$ with respect to a graded ordering, otherwise the containment may be strict. 

\begin{lemma}\label{le:truncated_ideal}
If the polynomials $p_1,\dots, p_s$ form a Gröbner basis of the ideal $\Ip$ with respect to the graded lexicographic order, then we have the following equality of sets,
\[
\langle \p \rangle_{t} = \left\{\sum_{l=1}^s k_l p_l \mid \deg(k_l p_l) \leq t\right\} = \Ip_t.
\]
\end{lemma}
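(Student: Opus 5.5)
The inclusion $\langle \p \rangle_{t} \subseteq \Ip_t$ is immediate. Each element $\sum_l k_l p_l$ with $\deg(k_l p_l)\le t$ lies in $\Ip$, and its degree is at most $\max_l \deg(k_l p_l)\le t$. All the work is in the reverse inclusion. For that I will use the multivariate division algorithm with respect to $<_{\grlex}$, as in \cite{cox_ideals_2015}, together with the defining property of a Gröbner basis.

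Fix $q\in\Ip_t$, i.e.\ $q\in\Ip$ with $\deg q\le t$. Dividing $q$ by the ordered tuple $(p_1,\dots,p_s)$ gives a representation
\[
q=\sum_{l=1}^s k_l p_l + r,
\]
with two properties. First, no term of $r$ is divisible by any $\LT(p_l)$. Second, whenever $k_l p_l\neq 0$, the leading monomial of $k_l p_l$ is $\le_{\grlex}$ the leading monomial of $q$. The second property holds because each step of the algorithm subtracts a multiple $c\,x^{\gamma}p_l$ whose leading term cancels the current leading term of the running dividend. The running dividend only decreases in the monomial order, so every such leading term is bounded by $\LT(q)$.

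Next I show $r=0$. We have $r=q-\sum_l k_lp_l\in\Ip$. If $r\neq0$, the Gröbner basis property $\LT(\Ip)=\LT(\Gb)$ says that $\LT(r)$ is divisible by some $\LT(p_l)$. This contradicts the first property of the remainder, so $r=0$.

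Finally, the order is graded: $x^\alpha\le_{\grlex}x^\beta$ implies $|\alpha|\le|\beta|$. So $\deg(k_lp_l)$ equals the degree of its leading monomial, which is at most $\deg q\le t$. Hence $q\in\langle \p\rangle_t$.

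The main point needing care is the degree bound on the quotients $k_l p_l$. It is exactly where the graded order is essential: under a non-graded order such as $<_{\lex}$, the bound $\LT(k_lp_l)\le\LT(q)$ would not control total degree. I would state and justify this property of the division algorithm explicitly, citing \cite{cox_ideals_2015}, rather than merely invoking it.
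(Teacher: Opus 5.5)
Your proof is correct and follows essentially the same approach as the paper. The paper just cites \cite[Chapter 2]{cox_ideals_2015}: \S6 Corollary 2 (membership in $\Ip$ means the remainder on division by the Gröbner basis is zero) and \S9 Lemma 2 (a zero remainder gives a standard representation with $\mathrm{multideg}(k_l p_l) \le \mathrm{multideg}(q)$). Your argument unpacks exactly these two facts, and you correctly use the graded order to turn the multidegree bound into the total-degree bound $\deg(k_l p_l)\le t$.
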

Lemma \ref{le:truncated_ideal} can be obtained after combining Corollary~2 of \S6 and Lemma~2 of \S9 in \cite[Chapter 2]{cox_ideals_2015}. 

\paragraph{Real radicalness of the ideal}

Given an ideal $\Ip$, its \emph{real vanishing set} is $V_{\R}(\Ip) = \{x \in \R^n \mid k(x)=0, \;\; \text{for all}\; k \in \Ip\}$.
An ideal is called \emph{real radical} if it satisfies $\Ip = \sqrt[\R]{\Ip}$, where
\begin{align*}
\sqrt[\R]{\Ip} & = \{k \in \R[x] \mid k^{2a} + \sigma \in \Ip \text{ for some positive } a \in \N, \sigma \in \Sigma[x]  \}.
\end{align*} 
The assumption of real radicalness is used in the two proofs of Theorem \ref{th:attainSOS}, to show that if $k$ vanishes on $X$, then it is in the ideal $\Ip$, i.e., we have that $k+q = 0$, where $q\in \Ip$. This is precisely the statement of Realnullstellensatz. For more details about real radical ideals, real vanishing sets, and the Realnullstellensatz, see \cite[Section 3]{schweighofer2022realalggeo}.

\paragraph{Ball constraint}
Note that if $X$ is compact, then $g_1= R - \|x\|^2$ can always be fulfilled by simply adding a (possibly redundant) ball inequality constraint in the description of $X$. 
For the sake of simplicity, we assume that there is only one explicit ball constraint for both items of Assumption \ref{hyp:ball}. 
Another possibility would be, as in \cite{tacchi2022convergence}, to add several such constraints. 
Combined with $h_1\equiv1$ and $b_1 > 0$, the ball constraint allows one to ensure that the feasible set of \eqref{eq:gmp_relax} is bounded at each relaxation order. 
Note also that in the first item of Assumption \ref{hyp:ball}, we could suppose that $X$ is described with more than one (not necessarily ball) inequality constraints. 
However, we then go on to assume that the only inequality is, in fact, redundant, since $V_{\R}(\Ip) \subset \big\{x\in \R^n \mid  g_1(x) = R - \|x\|^2 > 0\big\}$. 
Finally, in the case considered in Subsection \ref{ssec:sphs} where $X$ is the product of spheres, there is no need to add any redundant ball constraint as the explicit sphere constraints already ensure that the feasible set of \eqref{eq:gmp_relax} is bounded.

\paragraph{Relative interior assumption} We present examples where duality attainment fails for either the GMP, its relaxations, or both, if the assumption $b\in \relint(K)$ is dropped.

We begin with an example which shows that duality attainment can fail for the infinite dimensional GMP without the assumption $b\in \relint(K)$, even if it holds for the finite dimensional relaxation.

\begin{example}
\label{ex:infno_finyes}
We set  $X=[0,1]$, and consider the following GMP instance and its dual
\begin{equation}\label{eq:counterex}
\begin{aligned}
&\inf_{\mu}  \; \; \int_0^1 x(x-1) \di\mu  \qquad\qquad &&\sup_{\lambda} \; \; \lambda_1 \\
&\st \;  \int_0^1 1 \di\mu = 1  &&\st \; x(x-1) - \lambda_1 - \lambda_2x^2(1-x)\geq 0 \text{ on } [0,1] \\
&\qquad \int_0^1 x^2(1-x) \di\mu = 0 &&\qquad \lambda_1,\lambda_2 \in \R. \\
&\qquad \mu \in\mathcal{M}_+(X) 
\end{aligned}
\end{equation}
In the notation of \eqref{eq:gmp_primal}, we have $f=x(x-1),\; h_1 \equiv 1,\; h_2=x^2(1-x), \; b_1 = 1, \; \text{and}\;\; b_2 = 0$. We set $\delta_c$ as being the Dirac delta measure at $c\in\R$. Feasible measures to the primal must be of the form $\mu = a_0 \delta_0 + a_1 \delta_1$, with $a_0+a_1=1$ and $a_0,a_1\geq0$. This is because $h_2$ is nonnegative on $[0,1]$ and zero only at the endpoints. Hence, the optimal value of the primal problem is clearly zero, which gives an upper bound of zero on the optimal dual value, by weak duality.

This means that, if it were to be feasible, a dual solution of the form $(0,\lambda_2)$ would give the optimal value. However, if $\lambda_1 =0$, then we must find $\lambda_2$ such that 
\[
x^2 -x -\lambda_2x^2 (1-x) \geq 0  \text{ on } [0,1].
\]
This can never be the case as the term $-x$ always dominates the left-hand-side of the above inequality near the origin. 

We now show that there is a sequence of feasible solutions $\{\lambda^{(l)}\}_{l > 0}$, whose objective value tends to zero. 
Set $\lambda_1^{(l)} = -\frac{1}{l}$, and $\lambda_2^{(l)}= \min_{x \in [0,1]} \frac{x^2-x+1/l}{x^2(1-x)}$. 
Note that this latter quantity is bounded from below, as the only points where the denominator vanishes are $0$ and $1$, and the corresponding limits of the expression $ \frac{x^2-x+1/l}{x^2(1-x)}$ are both $+ \infty$. 
Then for all positive integer $l$ one has
\[
x^2 -x + \frac{1}{l} - \lambda_2^{(l)} x^2(1-x)  \geq 0 \text{ on } [0,1]. 
\]
Therefore duality attainment fails for the infinite dimensional GMP relaxation. 

If we now replace the dual inequality constraint by the following equality constraint
\[
x(x-1) - \lambda_1 - \lambda_2x^2(1-x) = \sigma_0(x) + \sigma_1 x (x-1),
\]
with a sum-of-squares polynomial $\sigma_0$ of degree at most 2 and a nonnegative scalar $\sigma_1$, then necessarily $\lambda_2 = 0$ since the right-hand-side has degree at most 2. Thus one has 
\[
(1+\sigma_1) x(x-1) - \lambda_1 = \sigma_0(x),
\]
which is equivalent to the nonnegativity of $(1+\sigma_1) x(x-1) - \lambda_1$ since nonnegative univariate polynomials are sums of squares. 
Since the minimum of $x(x-1)$ over $\R$ is $-1/4$ we obtain the optimal solution $\lambda_1=-1/4, \lambda_2=0, \sigma_0=(x-1/2)^2, \sigma_1=0$, yielding attainment of the finite dimensional sum-of-squares program. 
\end{example}

The following example shows that duality attainment can fail for the finite dimensional GMP relaxations without the assumption $b\in \relint(K)$, even if duality attainment holds for the infinite dimensional GMP.

\begin{example}\label{ex:infyes_finno}
We set  $X=[0,1]$, and consider the following GMP instance and its dual
\begin{equation}\label{eq:counterex2}
\begin{aligned}
&\inf_{\mu}  \; \; \int_0^1 x \di\mu  \qquad\qquad &&\sup_{\lambda} \; \; \lambda_1 \\
&\st \;  \int_0^1 1 \di\mu = 1  &&\st \; x - \lambda_1 - \lambda_2x^2\geq 0 \text{ on } [0,1] \\
&\qquad \int_0^1 x^2 \di\mu = 0 &&\qquad \lambda_1,\lambda_2 \in \R. \\
&\qquad \mu \in\mathcal{M}_+(X) 
\end{aligned}
\end{equation}
In the notation of \eqref{eq:gmp_primal}, we have $X=[0,1],\; f=x,\; h_1 \equiv 1,\; h_2=x^2, \; b_1 = 1, \; \text{and}\;\; b_2 = 0$. Finally, suppose $X$ is given with the semialgebraic description $X=\{x\in \R\;|\; g(x)=x^3(1-x)\geq 0\}$. The only feasible solution to the primal is the Dirac delta measure at zero, i.e., $\mu = \delta_0$. This is because $h_2$ is nonnegative on $[0,1]$ and zero only at the $x=0$. Hence, the optimal value of the primal problem is zero, and is attained. Furthermore, it gives an upper bound of zero on the optimal dual value. Thus, the optimal value of the infinite dual problem is attained by the solution $\lambda_1,\lambda_2=0$. This shows duality attainment holds for the infinite dimensional GMP.

With $t_g=\lceil\frac{\deg g}{2} \rceil$, the $t^{th}$ primal relaxation and dual strengthening are given by
\[
\begin{aligned}
&\inf_{\mom}  \; \; L_{\mom}(x)  \qquad\qquad &&\sup_{\lambda, \sigma_0,\sigma_1}  \; \; \lambda_1 \\
&\st \;  L_{\mom}(1) = 1 && \st \; x- \lambda_1 - \lambda_2x^2 = \sigma_0 + \sigma_1 g \\
&\qquad L_{\mom}(x^2) = 0  && \qquad \sigma_0 \in \Sigma[x]_{2t}, \; \sigma_1 \in \Sigma[x]_{2(t-t_g)}\\
&\qquad M_t(\mom)\succeq 0,\; M_{t-t_g}(g \mom) \succeq 0 && \qquad \lambda_1,\lambda_2 \in \R\\
& \qquad \mom \in \R^{\N^n_{2t}}. &&
\end{aligned}
\]
The solution $\mu=\delta_0$ of the infinite dimensional problem gives a solution to each level of the primal relaxation with value zero, which acts as an upper bound on the optimal value of the dual strengthening. This means that, if it were to be feasible, a dual solution of the form 
$(0,\lambda_2)$ would give the optimal value. However, if $\lambda_1 =0$, then we must find $\lambda_2$ such that 
\[
x -\lambda_2 x^2 = \sigma_0 + \sigma_1 x^3(1-x).
\]
This can never be the case since the linear part of the left hand side is $x+0$, while the linear term on the right hand side will be contained in $\sigma_0$. However, $\sigma_0$ is nonnegative around zero, so it cannot have a linear part of $x+0$.

We now show that there is a sequence of feasible solutions $\{\lambda^{(l)}\}_{l > 0}$, whose objective value tends to zero, which implies that duality attainment fails for the finite dimensional GMP relaxation. Set $\lambda^{(l)}_1 = -\frac{1}{4l^2},  \lambda_2^{(l)}=-l^2$, and write
\[
x + \frac{1}{4l^2} + l^2 x^2= \left(l x + \frac{1}{2l}\right)^2 = \sigma_0^{(l)}(x) + \sigma_1^{(l)}(x) g , 
\]
with $\sigma_0^{(l)}= \left(l x + \frac{1}{2l}\right)^2$ and $\sigma_1^{(l)} =0$.
\end{example}

Finally, the next example shows duality attainment can fail for both the finite dimensional GMP relaxation and the infinite dimensional GMP, without the assumption $b\in \relint(K)$.

\begin{example}\label{ex:infno_finno}
We use the same setup as in Example \ref{ex:infyes_finno}, with the only change being $f=-x$. In this case, dual attainment fails for the infinite dimensional GMP. This is because there does not exist $\lambda_2$ such that 
\[
-x - \lambda_2 x^2 \geq 0 \text{ on } [0, 1], 
\]
since the term $-x$ always dominates the left-hand-side of the above inequality near the origin. 
Similarly to Example \ref{ex:infyes_finno}, duality attainment fails for the finite dimensional GMP relaxation: set $\lambda^{(l)}_1 = - \frac{1}{4l^2},  \lambda_2^{(l)}=-l^2$, and write
\[
-x + \frac{1}{4l^2} + l^2 x^2= \left(l x - \frac{1}{2l}\right)^2 = \sigma_0^{(l)}(x) + \sigma_1^{(l)}(x) g , 
\]
with $\sigma_0^{(l)}= \left(l x - \frac{1}{2l}\right)^2$ and $\sigma_1^{(l)} =0$.
\end{example}

\section{Duality attainment of the GMP}\label{sec:GMP}

In this section, we prove duality attainment for \eqref{eq:gmp_primal}, under the relative interior assumption.

Recall that for a convex set $C$, its \emph{relative interior}, denoted $\relint(C)$, is the interior of $C$ within its affine hull, denoted $\aff(C)$,
\[
\relint(C) = \{b \in \aff(C) \mid  B(b,\varepsilon)  \cap \aff(C) \subset C \text{ for some } \varepsilon > 0 \},
\]
where $B(b,\varepsilon) = \{b' \in \R^m \colon \|b - b'\|_{\infty} \le \varepsilon \}$ is the ball with radius $\varepsilon$ centered on $b$. When $C$ contains the origin, then one can replace $\aff(C)$ by the linear hull of $C$, denoted by $\lin(C)$. For more information about the relative interior, see \cite[Section 6]{rockafellar1997convex}.

Given a polynomial map $h = (h_1,\dots,h_m)$ with $h_1,\dots,h_m \in \R[x]$, the so-called \emph{pushforward measure} of a given $\mu \in \mathcal{M}_+(X)$ under $h$ is
defined as 
\begin{align*}
h_{\#} \mu (C) = \mu (h^{-1}(C)) = \mu (\{ x \in X : h(x) \in C \}),
\end{align*}
for every Borel set $C \in \mathscr{B}(\R^m)$. In this way, $h_{\#} \mu$ is a nonnegative Borel measure on $\R^m$. The change of variable formula then gives
\[
\int_{\R^m} g  \, \di h_{\#}\mu = \int_X g \circ h \, \di \mu,
\]
for every Borel function $g \colon \R^m \to \R$ for which the right hand side of the equation exists.

The following proposition gives an equivalent description of the set $K$, as in \eqref{eq:K_cone}. In this proposition, the \emph{conic hull} of a set $Y \subseteq \R^m$ is the smallest pointed convex cone that contains $Y$, i.e., the set
\[
\{\alpha_1 y^{(1)} + \dots + \alpha_N y^{(N)} \mid N \ge 1, \ \alpha_1, \dots, \alpha_N \ge 0, \ y^{(1)},\dots,y^{(N)} \in Y\} \, .
\]
\begin{proposition}\label{pr:tchakaloff}
Consider the polynomial map $h$ and the cone $K$ defined in \eqref{eq:K_cone}. 
The cone $K$ is the conic hull of $h(X)$.
\end{proposition}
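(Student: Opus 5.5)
We prove the two inclusions separately, writing $\cone(h(X))$ for the conic hull of $h(X)$.

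\emph{Inclusion $\cone(h(X)) \subseteq K$.} This follows from Dirac measures. For $x \in X$, the measure $\delta_x \in \mathcal{M}_+(X)$ satisfies $\int_X h_i \di\delta_x = h_i(x)$, so $h(x) \in K$. The set $K$ is a convex cone, since $\mathcal{M}_+(X)$ is a convex cone and $\mu \mapsto (\int h_i \di\mu)_i$ is linear. Hence any nonnegative combination $\sum_k \alpha_k h(x^{(k)})$ is the image of $\sum_k \alpha_k \delta_{x^{(k)}}$ and lies in $K$.

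\emph{Inclusion $K \subseteq \cone(h(X))$.} This is the substantive direction, essentially a Tchakaloff/Carathéodory-type statement. Fix $\mu \in \mathcal{M}_+(X)$ and set $v = (\int h_i \di\mu)_i$. Since $X$ is compact (Assumption \ref{hyp:ball}) and $h$ is continuous, $h(X)$ is compact, so the $h_i$ are bounded on $X$ and $v$ is well defined. If $\mu(X)=0$ then $v=0$, which lies in the cone. Otherwise, normalize to the probability measure $\nu = \mu/\mu(X)$. It then suffices to show that $w = \int h \di\nu$ lies in the convex hull $\mathrm{conv}(h(X))$, since then $v = \mu(X)\, w \in \cone(h(X))$.

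By the change of variables formula, $w = \int_{\R^m} y \, \di(h_\#\nu)(y)$, the barycenter of the probability measure $h_\#\nu$ supported on the compact set $h(X)$. The key fact is that the barycenter of a probability measure on a compact set $C \subseteq \R^m$ lies in $\mathrm{conv}(C)$. Because $C$ is compact, $\mathrm{conv}(C)$ is compact by Carathéodory's theorem (it is the continuous image of $C^{m+1} \times$ a simplex), hence closed. Suppose $w \notin \mathrm{conv}(C)$. Strict separation gives $a \in \R^m$ and $c$ with $\langle a, y\rangle \le c < \langle a, w\rangle$ for all $y \in C$. Integrating against $h_\#\nu$ yields $\langle a, w\rangle \le c$, a contradiction.

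\emph{Main obstacle.} The only delicate point is closedness of $\mathrm{conv}(h(X))$, which is exactly where compactness of $X$ enters. Without compactness the barycenter could lie only in the closure. We handle it by the Carathéodory argument above. Note that $\cone(h(X))$ itself need not be closed; the normalization $h_1 \equiv 1$ makes it closed, but that is not needed here. Alternatively, one could invoke a Tchakaloff/Richter theorem directly to replace $\mu$ by a finitely atomic measure with the same $h$-moments, but the separation argument is self-contained.
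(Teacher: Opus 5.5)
Your proof is correct, but for the nontrivial inclusion $K \subseteq \cone(h(X))$ it takes a different route from the paper.

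The paper pushes $\nu$ forward to $h_{\#}\nu$ and checks that this measure has finite first moments. It then invokes Tchakaloff's theorem in the Bayer--Teichmann form. This gives a cubature rule with at most $m$ nodes in $X$ and positive weights that reproduces $\int_X h_i \di\nu$ for all $i$.

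You instead normalize to a probability measure and prove directly that the barycenter of $h_{\#}\nu$ lies in $\mathrm{conv}(h(X))$. You use that the convex hull of a compact set is compact (Carath\'eodory), together with strict separation.

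Your argument is elementary and self-contained. It also isolates exactly where compactness of $X$ enters, namely the closedness of $\mathrm{conv}(h(X))$.

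The cubature theorem does not depend on that closedness. So the paper's route would survive for non-compact $X$ as long as the $h_i$ are $\nu$-integrable, whereas your strict-separation step would then only place the barycenter in the closure. The paper's route also comes with the explicit bound $N \le m$ on the number of atoms. You can recover that bound by applying the conic form of Carath\'eodory's theorem to $v \in \cone(h(X))$. Neither the proposition as stated nor its use in the proof of Theorem~\ref{th:gmp_int} needs it, however.

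One minor point: compactness of $X$ is a standing hypothesis in Section~\ref{sec:GMP}. It does not come from Assumption~\ref{hyp:ball}, which the paper only imposes when discussing the relaxations.
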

\begin{proof}
Denote $Z := h(X)$. Every element in the conic hull of $Z$ can be written as $\sum_{j=1}^N \alpha_j h(x^{(j)})$ for some $x^{(j)} \in X$ and $\alpha_j \ge 0$. By selecting $\nu = \sum_{j=1}^N \alpha_j \delta_{x^{(j)}} \in \mathcal{M}_+(X)$, this conic hull element belongs to $K$.

Conversely, fix $\nu \in \mathcal{M}_+(X)$. If $\nu$ is the zero measure, then the vector $(\int_X h_i \di \nu)_i$ is the zero vector, which belongs to the conic hull of $Z$. If $\nu$ is not the zero measure, let $\eta := h_{\#} \nu$ be the pushforward of $\nu$. Note that $\eta$ has finite first moments, because the change of variable formula and compactness of $X$ give
\[
\int_{\R^m} \| y\| \di \eta(y) = \int_{X} \| h(x)\| \di \nu(x) < +\infty \, .
\]
Hence, Tchakaloff's theorem \cite{tchakaloff1957cubature} in the form stated in \cite[Corollary 2]{BayerTeichmann:2006} gives that there exists a cubature rule for the integration of the polynomials $h_i$ over $X$ with respect to $\nu$. In other words, there exists $1 \le N \le m$, points $x^{(1)}, \dots, x^{(N)} \in X$, and weights $\alpha_1, \dots, \alpha_N > 0$ such that the equality $\int_X h_i \di \nu = \sum_{j} \alpha_j h_i(x^{(j)})$ holds for all $i$. Hence, the vector $(\int_X h_i \di \nu)_i$ belongs to the conic hull of $Z$.
\end{proof}
The dual cone of $K$ is the cone of all nonnegative functionals on $K$, i.e., $K^* := \{\lambda \in \R^m \mid \sum_{i=1}^m \lambda_i b_i \ge 0 \text{ for all $b \in K$}\}$. By Proposition~\ref{pr:tchakaloff}, we have the following description.
\begin{corollary}\label{cor:dual_cone}
We have $K^* = \{\lambda \in \R^m \mid \sum_{i = 1}^{m} \lambda_i h_i \ge 0 \text{ on $X$}\}$.
\end{corollary}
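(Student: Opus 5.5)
The plan is to prove the two inclusions between $K^*$ and the set $P := \{\lambda \in \R^m \mid \sum_{i=1}^m \lambda_i h_i \ge 0 \text{ on } X\}$ separately, with Proposition~\ref{pr:tchakaloff} doing the main work in one direction. The underlying idea is that a linear functional is nonnegative on a conic hull exactly when it is nonnegative on the generating set. Here the generating set is $h(X)$.

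For the inclusion $P \subseteq K^*$, I take $\lambda \in P$ and an arbitrary $b \in K$. By Proposition~\ref{pr:tchakaloff}, I can write $b = \sum_{j=1}^N \alpha_j h(x^{(j)})$ with $\alpha_j \ge 0$ and $x^{(j)} \in X$. Then
\[
\sum_{i=1}^m \lambda_i b_i = \sum_{j=1}^N \alpha_j \sum_{i=1}^m \lambda_i h_i(x^{(j)}) \ge 0 .
\]
One could even avoid Proposition~\ref{pr:tchakaloff} here. Writing $b_i = \int_X h_i \di\mu$, we get $\sum_i \lambda_i b_i = \int_X \sum_i \lambda_i h_i \di \mu \ge 0$, since the integrand is nonnegative on $X$ and $\mu$ is a nonnegative measure.

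For the inclusion $K^* \subseteq P$, I take $\lambda \in K^*$ and a point $x \in X$. The Dirac measure $\delta_x$ belongs to $\mathcal{M}_+(X)$, so $h(x) = (\int_X h_i \di\delta_x)_i \in K$. This is also the trivial direction of Proposition~\ref{pr:tchakaloff}, namely $h(X) \subseteq K$. It then follows that $\sum_i \lambda_i h_i(x) \ge 0$. Since $x \in X$ was arbitrary, $\lambda \in P$.

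Neither step presents a real obstacle. The only point requiring care is that $K$ is described either through measures or as a conic hull, and both descriptions must be available. Proposition~\ref{pr:tchakaloff} ensures they coincide, and each inclusion then reduces to a one-line check of a linear functional against generators or integrals.
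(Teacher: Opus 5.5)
Your proof is correct and follows the paper, which obtains the corollary directly from the conic-hull description of $K$ in Proposition~\ref{pr:tchakaloff}: a linear functional is nonnegative on a conic hull if and only if it is nonnegative on the generators $h(X)$. Your side remark is also right: integrating $\sum_i \lambda_i h_i \ge 0$ against $\mu$, together with Dirac measures $\delta_x$ for the reverse inclusion, gives both inclusions without invoking Tchakaloff's theorem at all.
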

We next recall that when $X$ is compact, the primal infinite dimensional linear problem \eqref{eq:gmp_primal} with polynomial input data is attained if it has a feasible solution. The following statement is an adaptation of \cite[Theorem~1]{klerk2019survey}.
\begin{theorem}\label{th:nogap_gmp}
Assume that \eqref{eq:gmp_primal} has a feasible solution. 
Then there is no duality gap between \eqref{eq:gmp_primal} and \eqref{eq:gmp_dual}. Furthermore, \eqref{eq:gmp_primal} has an optimal solution. 
\end{theorem}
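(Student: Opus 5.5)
The plan is to treat \eqref{eq:gmp_primal} as a conic linear program over the cone $\mathcal{M}_+(X)$, paired with the space $C(X)$ of continuous functions on the compact set $X$. We need two facts: the primal optimum is attained, and there is no duality gap.

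\textbf{Attainment.} Let $\mu_k$ be a minimizing sequence of feasible measures. Since $h_1\equiv 1$, every feasible $\mu$ has total mass $\mu(X)=b_1$. The measures therefore lie in a bounded subset of $\mathcal{M}_+(X)=C(X)^*_+$, which is weak-$*$ compact by Banach--Alaoglu (and sequentially compact, since $C(X)$ is separable). Pass to a weak-$*$ convergent subsequence $\mu_k\rightharpoonup\mu^\star$. Because $f,h_1,\dots,h_m$ are continuous on $X$, the constraints $\int h_i\,\di\mu^\star=b_i$ pass to the limit, and $\int f\,\di\mu_k\to\int f\,\di\mu^\star$. Hence $\mu^\star$ is feasible and optimal, and the primal value $\rho$ is finite.

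\textbf{No duality gap.} Write $A\mu=(\int h_i\,\di\mu)_i$ and consider the image cone
\[
\mathcal{C}:=\left\{\left(A\mu,\ \int f\,\di\mu\right)\mid \mu\in\mathcal{M}_+(X)\right\}\subset\R^{m+1}.
\]
The key step is to show that $\mathcal{C}$ is closed. Take $(A\mu_k,\int f\,\di\mu_k)\to(c,r)$. The first coordinate $\mu_k(X)=c_1$ is bounded, so the same weak-$*$ compactness argument yields a limit measure $\mu$ with image exactly $(c,r)$. Closedness of $\mathcal{C}$ is the standard sufficient condition (for instance in Barvinok's or Shapiro's treatment of conic LPs) for zero duality gap when the primal is feasible.

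To make this self-contained, fix $\varepsilon>0$. The point $(b,\rho-\varepsilon)$ does not lie in the closed convex cone $\mathcal{C}$, since otherwise some feasible $\mu$ would have value $\rho-\varepsilon<\rho$. Strict separation gives $(\lambda,\tau)\in\R^{m}\times\R$ with
\[
\sum_i\lambda_i\int h_i\,\di\mu+\tau\int f\,\di\mu\ \ge\ 0\ >\ \sum_i\lambda_i b_i+\tau(\rho-\varepsilon)
\]
for all $\mu\in\mathcal{M}_+(X)$.

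It remains to show $\tau>0$. Testing the left inequality on the feasible $\mu^\star$ gives $\sum_i\lambda_ib_i+\tau\rho\ge0$. Comparing with the right inequality yields $\tau\varepsilon>0$, so $\tau>0$. Normalizing $\tau=1$ and testing on Dirac measures $\delta_x$ gives $f+\sum_i\lambda_ih_i\ge0$ on $X$, so $-\lambda$ is dual feasible. Its dual value satisfies $\sum_i b_i(-\lambda_i)>\rho-\varepsilon$. Letting $\varepsilon\to0$, and combining with weak duality, the dual supremum equals $\rho$.

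The main obstacle is this closedness and separation step, and specifically ruling out a degenerate separating hyperplane with $\tau=0$. Here it is resolved by primal feasibility together with the fixed mass constraint from $h_1\equiv1$, $b_1>0$, which provides the compactness needed for closedness.
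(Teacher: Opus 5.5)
Your proof is correct. Its skeleton matches the paper's: both arguments rest on closedness of the finite-dimensional image cone $\hat K=\{(\int f\,\di\mu,\int h_1\,\di\mu,\dots,\int h_m\,\di\mu) \mid \mu\in\mathcal{M}_+(X)\}$, which is your $\mathcal{C}$ with the coordinates permuted, followed by conic duality.

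The difference lies in how each ingredient is obtained. The paper proves closedness with Tchakaloff's theorem: it identifies $\hat K$ with the conic hull of the compact set $\Phi(X)$, where $\Phi=(f,h_1,\dots,h_m)$. This stays entirely finite-dimensional, and as a by-product it shows that optimal measures can be taken finitely atomic. However, its step ``conic hull of a compact set, therefore closed'' silently uses $0\notin\operatorname{conv}\Phi(X)$, which holds only because $h_1\equiv 1$. For zero gap and attainment, the paper then simply cites Barvinok and de Klerk--Laurent.

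You get closedness and primal attainment directly from weak-$*$ sequential compactness of mass-bounded measures. This makes the role of $h_1\equiv 1$ explicit and needs no cubature theorem. You also carry out the separation argument yourself, using the optimal $\mu^\star$ to force $\tau>0$ and Dirac measures to recover a dual feasible $-\lambda$. The paper's version is shorter given its citations; yours is self-contained.
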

\begin{proof}
Let us consider the polynomial map $\Phi = (f,h_1,\dots,h_m)$. 
Since $\Phi$ is continuous and $X$ is compact, the image set $\Phi(X)$ is closed. 
As in the proof of Proposition \ref{pr:tchakaloff}, the cone 
\[
\hat{K} = \left\{\left(\int_X f \di \mu, \int_X h_1 \di \mu,\dots,\int_X h_m \di \mu  \right)  \mid \mu \in \mathcal{M}_+(X) \right\}
\] 
is the conic hull of $\Phi(X)$, therefore it is closed. 
The value of \eqref{eq:gmp_primal} is bounded from below as $f$ is a polynomial and $X$ is compact. 
The result then follows by the duality theory of conic linear optimization (see, e.g., \cite[Section~IV.7.2]{barvinok2025course} or \cite[Theorem~1]{klerk2019survey}). 
\end{proof}

\begin{theorem*}[Repetition of Theorem \ref{th:gmp_int}]
Assume that $h_1\equiv1$, $b_1 > 0$ and that $b = (b_1,\dots,b_m)$ is in the relative interior of $K$. 
Then the dual program \eqref{eq:gmp_dual} attains its optimal value.
\end{theorem*}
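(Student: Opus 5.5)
The plan is to reduce \eqref{eq:gmp_dual} to a finite dimensional conic program over the cone $K$ and then use the standard fact that a finite dimensional conic LP whose primal right-hand side lies in the relative interior of the cone attains its dual.

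By Theorem \ref{th:nogap_gmp}, since $b\in K$ the primal \eqref{eq:gmp_primal} is feasible, has an optimal solution, and there is no duality gap. So the primal value $v$ is finite and equals the dual supremum. It remains to produce $\lambda$ with $f-\sum_i\lambda_i h_i\ge 0$ on $X$ and $\sum_i b_i\lambda_i=v$.

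Consider the lifted cone $\hat K$ from the proof of Theorem \ref{th:nogap_gmp}, which is closed and convex. The point $(v,b)$ lies on its boundary, because points $(v-\epsilon,b)$ are not in $\hat K$. I would separate $(v,b)$ from $\hat K$ with a supporting hyperplane: there is a nonzero $(\tau,-\lambda)\in\R\times\R^m$ such that
\[
\tau\int f\,\di\mu-\sum_i\lambda_i\int h_i\,\di\mu\ \ge\ 0\quad\text{for all }\mu\in\mathcal M_+(X),
\qquad
\tau v-\sum_i\lambda_i b_i=0 .
\]
Equivalently, by Corollary \ref{cor:dual_cone} applied to the map $(f,h)$, we get $\tau f-\sum\lambda_i h_i\ge0$ on $X$. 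Testing on the points $(v+s,b)\in\hat K$ for $s\ge 0$ gives $\tau\ge 0$.

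The key step, and the main obstacle, is excluding $\tau=0$ by means of the relative interior assumption. A naive separation could give a hyperplane containing the whole of $\lin(\hat K)$ directions, so I would instead separate within the affine hull. Concretely, I would restrict to the subspace $\R\times\lin(K)$ and note that $\lin(\hat K)\subseteq \R\times\lin(K)$. I would choose the separating functional inside this subspace, nonzero on it, by separating $(v-\epsilon,b)$ from $\hat K$ and taking limits, using Rockafellar-style relative separation.

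Suppose $\tau=0$. Then $\sum\lambda_i h_i\ge 0$ on $X$, so $\lambda\in K^*$, while $\langle\lambda,b\rangle=0$. Since $b\in\relint(K)$, the linear functional $\langle\lambda,\cdot\rangle$ is nonnegative on $K$ and vanishes at a relative interior point. Hence it vanishes on all of $\lin(K)$, so $(0,-\lambda)$ is zero as a functional on $\R\times\lin(K)$, contradicting nontriviality. If $\lin(\hat K)$ is strictly smaller than $\R\times\lin(K)$, then $f$ coincides on $X$ with a combination of the $h_i$ modulo functions vanishing on $h$-directions, and attainment is immediate from that linear relation; I would handle this degenerate case separately.

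With $\tau>0$, I would rescale to $\tau=1$. This yields $f-\sum\lambda_i h_i\ge0$ on $X$ with $\sum b_i\lambda_i=v$, so $\lambda$ is optimal for \eqref{eq:gmp_dual}. The hypothesis $h_1\equiv1$, $b_1>0$ enters only through Theorem \ref{th:nogap_gmp}, guaranteeing boundedness and finiteness of $v$.
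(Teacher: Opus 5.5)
Your separation route is genuinely different from the paper's and can be completed. However, the step ``testing on the points $(v+s,b)\in\hat K$ for $s\ge 0$ gives $\tau\ge 0$'' is false as written.

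The cone $\hat K$ is not stable under adding $(1,0)$. Its fiber over $b$ is $\{\int f\,\di\mu : \mu \text{ feasible}\}\subseteq[b_1 f_{\min}, b_1 f_{\max}]$, so $(v+s,b)\notin\hat K$ for large $s$. If this fiber is $\{v\}$ (e.g.\ $m=1$ and $f$ constant on $X$, where $\hat K$ is a single ray), then no $s>0$ works. One $s>0$ would suffice, but showing that the fiber is not a singleton in your nondegenerate case again needs the relative interior hypothesis. One way is $\relint(K)=\pi(\relint(\hat K))$ for $\pi(y_0,y)=y$, combined with $(1,0)\in\lin(\hat K)$.

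The simplest repair is already inside your own limiting construction:
\begin{itemize}
\item If $\ell_\epsilon=(\tau_\epsilon,-\lambda_\epsilon)$ is nonnegative on $\hat K$ and $\ell_\epsilon(v-\epsilon,b)<0$, then $\epsilon\tau_\epsilon=\ell_\epsilon(v,b)-\ell_\epsilon(v-\epsilon,b)>0$. Hence the normalized limit $\ell=(\tau,-\lambda)$ has $\tau\ge 0$ and $\ell(v,b)=0$.
\item Take $\ell_\epsilon$ to be the metric projection of $(v-\epsilon,b)$ onto $\hat K$ minus the point itself. Then $\ell_\epsilon$, hence $\ell$, lies in $\R\times\lin(K)$, so $\lambda\in\lin(K)$.
\item If $\tau=0$, your relative interior argument yields $\lambda\perp\lin(K)$, hence $\lambda=0$, contradicting $\|\ell\|=1$. (That argument actually gives $-\lambda\in K^*$, not $\lambda\in K^*$; this sign slip is harmless.)
\end{itemize}
This also makes your separate degenerate case unnecessary. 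Alternatively, work with $\hat K+\R_+(1,0)$. It is closed because $(-1,0)\notin\hat K$ (since $h_1\equiv 1$ forces $\mu=0$), and your membership claim is true for it.

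With this fix, your proof differs from the paper's in mechanism. The paper never separates. It restricts dual feasible $\lambda$ with objective at least $b_1 f_{\min}-\eta$ to $\lin(K)$. It then writes $b+\varepsilon\lambda/\|\lambda\|\in K$ as a conic combination of points $h(x^{(j)})$ and evaluates the dual constraint at one atom. This gives the a priori bound $\|\lambda\|\le((b_1+\varepsilon)f_{\max}-b_1f_{\min}+\eta)/\varepsilon$, and attainment follows from closedness of the dual feasible set.

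Your argument is the classical conic duality proof on the finite dimensional lifted cone $\hat K$. It produces the optimal multiplier directly as the normal of a supporting hyperplane at $(v,b)$. It also isolates the role of $b\in\relint(K)$: it rules out $\tau=0$, which is exactly what the limiting separator is forced into in Example \ref{ex:infno_finyes}.

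The paper's version buys an explicit norm bound on near-optimal dual solutions. That same bounding template is reused in Section \ref{sec:GMP_relax_marshall} for the SOS strengthenings, combined with the closedness result of Proposition \ref{prop:moduleclosed}.
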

\begin{proof}
Let $f_{\min} = \min_{x \in X}f(x)$ and $f_{\max} = \max_{x \in X} f(x)$. 
Since $b$ is in the relative interior of $K$, there exists $\varepsilon > 0$ such that the set
\[
U := \{b' \in \R^m \colon \|b - b'\|_{\infty} \le \varepsilon \} \cap \lin(K)
\]
is included in $K$. Note that $\varepsilon$ only depends on $b$. 
By Theorem \ref{th:nogap_gmp} there is no duality gap between \eqref{eq:gmp_primal} and \eqref{eq:gmp_dual}. 
Since the primal optimal value is greater or equal to $\int_X f_{\min} \di \mu = b_1 f_{\min}$, let us consider any feasible solution $\lambda=(\lambda_1,\dots,\lambda_m)$ with $\sum_{i=1}^m b_i \lambda_i \geq b_1 f_{\min} - \eta$ for some $\eta > 0$. Without loss of generality, one can further restrict the decision variable $\lambda$ of  \eqref{eq:gmp_dual} to belong to $\lin(K)$. 
Indeed, write $\lambda = u+ v$, with $u \in \lin(K)$ and $v \in \lin(K)^\perp$. Then $\sum_{i=1}^m b_i \lambda_i = \sum_{i=1}^m b_i u_i + \sum_{i=1}^m b_i v_i = \sum_{i=1}^m b_i u_i$, since $b \in \lin(K)$. Thus, the dual objective function is unchanged. 
In addition, Corollary~\ref{cor:dual_cone} gives 
\[
\lin(K)^\perp = K^* \cap -K^* = \left\{v \in \R^m: \sum_{i=1}^m v_i h_i(x)=0 \text{ on } X \right\} \, .
\]
Hence, we have $\sum_{i=1}^m  h_i(x)\lambda_i = \sum_{i=1}^m  h_i(x)u_i$, for all $x \in X$. 
Since the objective function and inequality constraint remain both unchanged in \eqref{eq:gmp_dual}, let us assume in the sequel that $\lambda \in \lin(K)$. 

Let $T := \|\lambda\|$ and $\lambda' := \lambda/T$. 
We prove that $T$ is upper bounded by a finite positive number. To do so, consider $b' := b + \varepsilon \lambda'$. Since $\lambda$ is in $\lin(K)$, we get $b' \in U$ and 
\[
\sum_{i=1}^m b'_i \lambda'_i - \frac{b_1 f_{\min}-\eta}{T} = \sum_{i=1}^m b_i \lambda'_i - \frac{b_1 f_{\min}-\eta}{T} + \varepsilon \ge \varepsilon \, .
\]
Since $b'$ is in $K$, there exist weights $w_j > 0$ and points $x^{(j)} \in X$ such that $b'_i = \sum_j w_j h_i(x^{(j)})$ for all $i=1,\dots,m$. 
Hence,
\begin{align*}
\sum_{j}\frac{w_j}{\sum_k w_k}\left(\sum_{i=1}^m h_i(x^{(j)})\lambda'_i - \frac{b_1 f_{\min}-\eta}{T \sum_k w_k}\right) 
& = \frac{1}{\sum_k w_k} \left( \sum_{i=1}^m b_i' \lambda'_i - \frac{b_1 f_{\min}-\eta}{T} \right) \\
& \ge \frac{\varepsilon}{\sum_k w_k}.
\end{align*}
Therefore, there exists $x^{(j)} \in X$ such that 
\[\sum_{i=1}^m h_i(x^{(j)})\lambda'_i - \frac{b_1 f_{\min}-\eta}{T \sum_k w_k} \ge \frac{\varepsilon}{\sum_k w_k}.\]
By feasibility of $\lambda$ we get 
\[\sum_{i=1}^m h_i(x^{(j)}) \lambda'_i = \frac{1}{T}\sum_{i=1}^m h_i(x^{(j)}) \lambda_i \le \frac{1}{T}f(x^{(j)}) \le \frac{f_{\max}} {T},\] and 
\[\frac{\varepsilon}{\sum_k w_k} + \frac{b_1 f_{\min}-\eta}{T \sum_k w_k} \leq \frac{f_{\max}}{T}.\]
Note that $b'_1 = \sum_k w_k$ and $|b'_1 - b_1| = \varepsilon |\lambda'_1| \le \varepsilon$, so $\sum_k w_k \le b_1 + \varepsilon$. 
Therefore 
\begin{align*}
T \le \frac{\sum_k w_k}{\varepsilon}\left(f_{\max} - \frac{b_1 f_{\min}-\eta}{\sum_k w_k} \right)  
& = \frac{\sum_k w_k f_{\max} - (b_1 f_{\min}-\eta)}{\varepsilon} \\
& \leq  \frac{(b_1+\varepsilon)f_{\max} - b_1 f_{\min}+\eta}{\varepsilon}, 
\end{align*}
which does not depend on the choice of $\lambda$.

The feasible set of \eqref{eq:gmp_dual} is the intersection of infinitely many polynomial superlevel sets, and, thus, is closed. This concludes the proof. 
\end{proof}

\begin{remark}
\label{rk:gmp_int}
Under the same relative interiority assumption, Theorem \ref{th:gmp_int} has been  proved by Nie in the special case where $K$ is the truncated moment cone, see \cite[Proposition 3.6 (ii)]{nie_atruncKprob_2014}. 
To emphasize this relation, we first mention an equivalent characterization of relative interior, that can be found in \cite[Theorem 2]{luo1997duality}:
\[
\relint(K) = \left\{b \in K \mid  \sum_{i=1}^m b_i \lambda_i > 0, \; \forall \lambda \in K^* \backslash \lin(K)^\perp  \right\},
\]
with $\lin(K)^\perp = K^* \cap - K^*$. 
If $K$ is the truncated moment cone, namely
\[
K = \left\{z = \left(\int_X x^{\alpha} \di \mu\right)_{\alpha\in\N^n_d} :  \mu \in \mathcal{M}_+(X) \right\},
\]
then $K^*$ and $\lin(K)^\perp$ correspond to coefficient vectors of polynomials being nonnegative on $X$ and vanishing on $X$, respectively. 
Given $z$ in the truncated moment cone $K$, it is assumed in \cite[Proposition 3.6 (ii)]{nie_atruncKprob_2014} that $\sum_{\alpha \in \N^n_d} p_{\alpha} z_{\alpha} > 0$ for all nonzero $p \in \R[x]_d$ being nonnegative on $X$. 
As in the proof of Theorem \ref{th:gmp_int}, the proof of \cite[Proposition 3.6 (ii)]{nie_atruncKprob_2014} restricts the feasible set of \eqref{eq:gmp_dual} to $\lin(K)$, which is equivalent to assume that $z \in \relint(K)$.

Nie's proof requires to show that the vector $z$ is a subsequence of another sequence $z'$ that has a representing measure, i.e., $z'$ is the moment sequence of a probability measure; see \cite[Proposition 3.3]{nie_atruncKprob_2014}.  
By contrast with Nie's construction, our proof is rather short and elementary, and can be applied to any generalized moment cones built from polynomial maps. 
\end{remark}

\section{Duality attainment of relaxations 1:\texorpdfstring{\\ Closed $\QM_{2t}$}{}}\label{sec:GMP_relax_marshall}

In this section, we prove that duality attainment holds for the GMP relaxation \eqref{eq:gmp_relax}, for every level $t$. We show that the feasible set of the dual program \eqref{eq:dual_relax} is closed in the Euclidean topology in $\R[x]_{2t}$.

\begin{proposition}\label{prop:moduleclosed}
Let $X$ be a compact subset of $\R^n$ and suppose Assumption \ref{hyp:ball} holds, then the following statements hold. 
\begin{enumerate}[(i)]
\item If $\Ip$ is a real radical ideal and $\p=\{p_1,\dots,p_s\}$ forms a Gröbner basis of $\Ip$ with respect to the graded lexicographical order, then the set $\QM_{2t}(g_1) + \langle \p \rangle_{2t}$ is closed for all $t \geq t_{\min}$.
\item If $\p=\{0\}$ and $X$ has nonempty interior, then the truncated quadratic module $\QM_{2t}(\g)$ is closed for all $t \geq t_{\min}$. 
\end{enumerate}
\end{proposition}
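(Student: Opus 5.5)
We prove both items with the classical Powers--Scheiderer/Marshall argument: show that the map sending SOS multipliers to the polynomial they generate has bounded preimages of bounded sets, so that images of closed sets stay closed.

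\emph{Case (ii).} Let $\mu$ be a measure with positive density on a nonempty open ball $B\subset X$ (e.g. the Lebesgue measure restricted to $B$). If $\int q^2 g_j\,\di\mu=0$ for some $q\in\R[x]_{t-t_{g_j}}$, then $q^2g_j$ vanishes on $B$ up to the zero set of $g_j$. Since $g_j\not\equiv 0$ (otherwise the interior would be empty or the constraint trivial), $q$ vanishes on a nonempty open set and hence $q=0$. Therefore the bilinear forms
\[
(q,r)\mapsto \int qr\,g_j\,\di\mu, \qquad j=0,\dots,\ell \text{ (with } g_0=1\text{)},
\]
are positive definite on the relevant finite-dimensional spaces. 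Take a sequence $f_k=\sum_j \sigma_{j,k}g_j\to f$ in $\R[x]_{2t}$ and write $\sigma_{j,k}=\sum_i q_{ijk}^2$ with at most $\dim\R[x]_{t-t_{g_j}}$ squares (Carathéodory). Integrating gives
\[
\sum_{j,i}\int q_{ijk}^2 g_j\,\di\mu=\int f_k\,\di\mu ,
\]
and the right-hand side is bounded. Since every term on the left is nonnegative, each $q_{ijk}$ is bounded in a norm equivalent to the coefficient norm. Passing to a convergent subsequence gives $f\in \QM_{2t}(\g)$.

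\emph{Case (i).} Work in the quotient. By the Gröbner basis assumption and Lemma~\ref{le:truncated_ideal}, $\langle\p\rangle_{2t}=\Ip_{2t}$. Moreover $\R[x]_{2t}/\Ip_{2t}$ is identified with the span of the standard monomials of degree $\le 2t$, via the normal form, which is linear and degree non-increasing for grlex. So it suffices to show that the image of $\QM_{2t}(g_1)$ in this finite-dimensional quotient is closed, because the preimage of a closed set under the quotient map is closed and equals $\QM_{2t}(g_1)+\Ip_{2t}$.

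Take $\mu$ a measure with full support on $X=V_\R(\Ip)$, for example $\sum_k 2^{-k}\delta_{x_k}$ with $\{x_k\}$ dense in $X$. If $q\in\R[x]_{t}$ has $\int q^2\,\di\mu=0$, then $q$ vanishes on $X$, so $q\in\Ip$ by the real Nullstellensatz and real radicality. Hence $q\in\Ip_t$ by the Gröbner property, i.e. $q$ is zero in the quotient $\R[x]_t/\Ip_t$. Likewise, since $g_1>0$ on $X$, $\int q^2g_1\,\di\mu=0$ forces $q\in\Ip_{t-1}$. The forms above are therefore positive definite on the quotient spaces $\R[x]_t/\Ip_t$ and $\R[x]_{t-1}/\Ip_{t-1}$. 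Given $f_k=\sigma_{0,k}+\sigma_{1,k}g_1+\iota_k\to f$ with $\iota_k\in\Ip_{2t}$, reduce each square $q_{ik}$ to its normal form $\bar q_{ik}$. This changes $\sigma_{0,k}$ and $\sigma_{1,k}g_1$ only by elements of $\Ip_{2t}$, and here the degree bound again uses the Gröbner/grlex property. Now the integral identity against $\mu$ bounds the $\bar q_{ik}$ in the quotient norm. Extracting convergent subsequences gives $f-\sigma_0-\sigma_1g_1\in\Ip$ with degree at most $2t$, hence in $\Ip_{2t}=\langle\p\rangle_{2t}$.

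The main obstacle is the degree bookkeeping in (i). We must verify that replacing $q$ by its normal form keeps $q^2-\bar q^2$ and $(q^2-\bar q^2)g_1$ inside $\Ip_{2t}$ with the correct degree truncation, and that limits of $\iota_k$ stay in $\langle\p\rangle_{2t}$. This is exactly where Lemma~\ref{le:truncated_ideal} is used: because $\Ip_{2t}$ is a linear subspace of a finite-dimensional space it is closed, and equality with $\langle\p\rangle_{2t}$ avoids the degree blow-up of ideal representations.
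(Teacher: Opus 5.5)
Your proof is correct. The algebraic ingredients are the paper's, but you get the key boundedness step by a different mechanism.

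\textbf{Paper's route.} Following Marshall, the paper writes the multipliers through Gram matrices indexed by $\mathcal{B}_t$ and argues by contradiction. If $\|G^{(l)}\|\to\infty$, normalizing and passing to a limit gives a nonzero pair $(\tilde\sigma_0,\tilde\sigma_1)$ with $\tilde\sigma_0+\tilde\sigma_1 g_1\in\Ip_{2t}$ (respectively $\tilde\sigma_0+\sum_j\tilde\sigma_j g_j=0$ in (ii)). Real radicality together with the choice of quotient basis (respectively the nonempty interior) then forces this pair to vanish.

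\textbf{Your route.} You construct a linear functional $p\mapsto\int p\,\di\mu$ that annihilates $\Ip$ and is positive definite on the reduced Gram spaces. This bounds the Gram data directly by $\int f_k\,\di\mu$, with no normalization and no contradiction.

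\textbf{Shared inputs.} Both proofs use the same algebra. Normal forms are degree non-increasing for grlex. The identity $\Ip_{2t}=\langle\p\rangle_{2t}$ absorbs both $q^2-\bar q^2$ and the limit of the ideal parts. The Realnullstellensatz converts vanishing on $X$ into membership in $\Ip$.

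\textbf{What each buys.} The paper's argument is purely algebraic and needs no auxiliary measure. Yours gives an explicit quantitative bound. It also shows that closedness comes from the same object that drives the paper's second proof. The moments of your full-support measure (the measure of Lemma~\ref{le:strictly_pos}) give positive definite reduced moment and localizing matrices, which is exactly the Slater point of Appendix~\ref{ap:pfofthm}. The two arguments are dual: by a theorem of alternatives, the absence of the nonzero limit certificate the paper rules out is equivalent to the existence of such a positive definite functional.

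\textbf{Points to tidy.}
\begin{itemize}
\item In (i), say explicitly that the number of squares stays bounded, as you did in (ii). After reduction, $\bar\sigma_{0,k}$ and $\bar\sigma_{1,k}$ have PSD Gram matrices of size $r_t$ and $r_{t-1}$ respectively.
\item Handle the case $V_\R(\Ip)=\emptyset$ separately. There your dense sequence does not exist, but real radicality gives $1\in\Ip$, hence $\langle\p\rangle_{2t}=\R[x]_{2t}$ and the claim is trivial.
\item The step $q\in\Ip_t$ is just the definition $\Ip_t=\Ip\cap\R[x]_t$. The Gr\"obner property is needed only where you place it in your last paragraph.
\end{itemize}
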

\begin{proof}
\textit{(i)} We suppose that the variety is not empty, and follow a similar reasoning as for the proof of \cite[Lemma 4.1.4]{marshall_positive_2008}.
First, since $\p=\{p_1,\dots,p_s\}$ forms a Gröbner basis  of $\Ip$ with respect to the graded lexicographical order, one has, by Lemma \ref{le:truncated_ideal},
\[
\Ip \cap \R[x]_{2t} = \Ip_{2t}  = \langle \p \rangle_{2t} = \left\lbrace\sum_{l=1}^s k_i p_i  \mid k_i \in \R[x], \deg(k_i p_i) \leq 2t \right\rbrace.
\]
Let $\mathcal{B}_t$ be a basis of the quotient vector space $\R[x]_t/\Ip_t$; see Appendix \ref{ap:algred} for the notation. 
Let $\Sigma'[x]_{2t}$ be the set of sums of squares of polynomials supported on $\mathcal{B}_t$. 
We define $\Sigma'[x]_{2(t-1)}$ similarly.

For each $\sigma = \sum_i \phi_i^2 \in \Sigma[x]_{2t}$, one has $\phi_i \in \Span_{\R}(\mathcal{B}_{t}) \oplus \Ip_{t}$. 
Writing $\phi_i = \phi_i' + y_i$ with $\phi_i' \in \Span_{\R}(\mathcal{B}_{t})$ and $ y_i \in \Ip_{t}$, we have $\sigma  = \sum_i (\phi_i'^2 + 2\phi_i'y_i + y_i^2) \in \Sigma'[x]_{2t} + \Ip_{2t}$. 
This implies that $\QM_{2t}(g_1) + \langle \p \rangle_{2t} = \QM'_{2t}(g_1) + \langle \p \rangle_{2t}$, where
\[
\QM'_{2t}(g_1) = \left\{\sigma_0 + \sigma_1 g_1 \mid \sigma_0\in \Sigma'[x]_{2t},\; \sigma_1 \in \Sigma'[x]_{2(t-1)}   \right\}.
\]
Any $\sigma \in \Sigma'[x]_{2t}$ can be written as $v_t^T G v_t$ with $v_t= \{q_i(x)\}_{q_i \in \mathcal{B}_t}$ for some PSD matrix $G$ of size $|\mathcal{B}_t| = r_t$. 
As the set of $r_t \times r_t$ PSD matrices is closed the set $\Sigma'[x]_{2t}$ is also closed.
Since $\Ip$ is real radical, $\Ip_{2t} = \Ip \cap \R[x]_{2t}$ is exactly the set of polynomials of degree at most $2t$ that vanish on $V_{\R}(\Ip)$. Hence, the set $\Ip_{2t} = \langle \p \rangle_{2t}$ is closed. 

We will now prove that $\QM'_{2t}(g_1) + \langle \p \rangle_{2t}$ is closed as well. For this we have to show that if a sequence $\{F^{(l)}\}_l$, defined by 
\[
F^{(l)} = \sigma_0^{(l)} + \sigma_1^{(l)} g_1 + \sum_{i=1}^s k_i^{(l)} p_i,
\]
with $\{\sigma_0^{(l)}\}_l \subset \Sigma'[x]_{2t}$, $\{\sigma_1^{(l)}\}_l \subset \Sigma'[x]_{2t-2}$, and $\{k_i^{(l)}\}_l \subset \R[x]_{2t - \deg p_i}$, converges to $F \in \R[x]_{2t}$, then $F \in \QM'_{2t}(g_1) + \langle \p \rangle_{2t}$. 
As above we write $\sigma_1^{(l)} = v_{t-1}^T G_1^{(l)} v_{t-1}$, and denote by $\|G^{(l)}\|$ the Euclidean norm of $G^{(l)}=(G_0^{(l)},G_1^{(l)})$, viewing $G^{(l)}$ as a tuple of real numbers.
If we can show that $\|G^{(l)}\|$ is bounded, then we are done. 

Suppose by contradiction that the sequence of norms $\{\|G^{(l)}\|\}_l$ is not bounded. Replacing each $\{\sigma_j^{(l)}\}_l$ by a suitable subsequence, assume that $\|G^{(l)}\| \to \infty$ and also, replacing $\{\sigma_j^{(l)}\}_l$ by a subsequence again, assume that $\frac{G^{(l)}}{\|G^{(l)}\|} \to \tilde{G} = (\tilde{G_0},\tilde{G_1})$, for some PSD matrices $\tilde{G_0}$ and $\tilde{G_1}$ of sizes $r_t$ and $r_{t-1}$, respectively,  with $\|\tilde{G}\|=1$. 
Then
\[
\frac{1}{\|G^{(l)}\|}\left(\sigma_0^{(l)} + \sigma_1^{(l)} g_1 + \sum_{i=1}^s k_i^{(l)} p_i\right) = \frac{1}{\|G^{(l)}\|} F^{(l)} \to 0.
\]
Since the sets $\Sigma'[x]_{2t}$ and $\Ip_{2t}$ are both closed, we also obtain that 
\begin{align*}
\frac{1}{\|G^{(l)}\|}\sigma_0^{(l)} \to \tilde{\sigma_0} \in \Sigma'[x]_{2t},\;\; \frac{1}{\|G^{(l)}\|}\sigma_1^{(l)} \to \tilde{\sigma_1} \in \Sigma'[x]_{2(t-1)} \\
\text{and}\;\;\frac{1}{\|G^{(l)}\|} \sum_{i=1}^s k_i^{(l)} p_i \to \sum_{i=1}^s \tilde{k_i} p_i \in \Ip_{2t}.
\end{align*}
Hence $\tilde{\sigma}_0 + \tilde{\sigma_1} g_1 \in \Ip_{2t}$, and the sum vanishes on $V_{\R}(\Ip)$. Since each summand is nonnegative on $V_{\R}(\Ip)$, we have that each summand must also vanish on $V_{\R}(\Ip)$. 
Furthermore, since $g_1$ is positive on $V_{\R}(\Ip)$, we obtain that $\tilde{\sigma_1}$ vanishes on $V_{\R}(\Ip)$. 
Writing $\tilde{\sigma_0} = \sum_{i} \phi_i'^2$, the above implies that $\phi'_i$ vanishes on $V_{\R}(\Ip)$. 
Since $\Ip$ is real radical, $\phi'_i \in \Ip_t$.   
But one also has that $\phi'_i \in \Span_{\R}(\mathcal{B}_{t})$, so  $\phi'_i = 0$. 
Thus $\tilde{\sigma_0}=0$ and $\tilde{G_0} = 0$. 
Similarly we obtain $\tilde{\sigma_1}=0$ and $\tilde{G_1} = 0$, 
yielding a contradiction with $\|\tilde{G}\|=1$. 

\textit{(ii)} We consider a variant of the above proof in the case when the variety is all of $\R^n$. 
In this case, we obtain that $\tilde{\sigma} + \sum_{j=1}^{\ell} \tilde{\sigma_j} g_j = 0$. 
Since this latter sum vanishes in particular on $X$ and each summand is nonnegative on $X$ then each summand must also vanish on $X$. 
Since $X$ has nonempty interior there exist $x' \in X$, $\varepsilon > 0$ and a ball $B(x',\varepsilon)$ such that $g_j(x) > 0$ for all  $x \in B(x',\varepsilon)$, implying that each $\tilde{\sigma_j}$ vanishes on $B(x',\varepsilon)$, so $\tilde{\sigma_j}=0$. 
\end{proof}

\begin{theorem}[Repetition of Theorem \ref{th:attainSOS}]
\label{th:attainSOS1}
Let $X$ be a compact subset of $\R^n$ satisfying Assumption \ref{hyp:ball}. 
Suppose that either
\begin{enumerate}[(i)]
    \item The ideal $\Ip=(p_1,\dots,p_s)$ is real radical, and that $\p=\{p_1,\dots,p_s\}$ forms a Gröbner basis of $\Ip$ with respect to the graded lexicographic order, or
    \item $\p=\{0\}$ and $X$ has nonempty interior. 
\end{enumerate}
Assume further, that $h_1\equiv1$, $b_1 > 0$, and that $b = (b_1,\dots,b_m)$ is in the relative interior of $K$. 
Then the semidefinite program \eqref{eq:dual_relax} attains its optimal value for all $t \geq t_{\min}$.
\end{theorem}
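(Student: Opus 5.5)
The plan is to mimic the proof of Theorem~\ref{th:gmp_int}, with the closedness from Proposition~\ref{prop:moduleclosed} replacing the closedness of the cone of nonnegative polynomials. Write $C_t := \QM_{2t}(\g)+\langle \p\rangle_{2t}$. In case (i), the ideal part absorbs the redundant ball term $g_1$, so $C_t = \QM_{2t}(g_1)+\langle\p\rangle_{2t}$; in case (ii), $C_t = \QM_{2t}(\g)$. In both cases Proposition~\ref{prop:moduleclosed} gives that $C_t$ is a closed convex cone in $\R[x]_{2t}$. Consequently the feasible set $F_t := \{\lambda\in\R^m : f-\sum_i h_i\lambda_i \in C_t\}$ of \eqref{eq:dual_relax} is closed, being the preimage of $C_t$ under a continuous affine map. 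It is also nonempty for $t\ge t_{\min}$: take $\lambda=(-c,0,\dots,0)$ with $c$ large. Then $f+c$ has a Putinar-type certificate using the ball constraint $g_1$ (Archimedean), but one at bounded degree is not automatic, so I would instead note that $f+c \in C_t$ for large $c$ via the interior of $\Sigma[x]_{2t}$ when $\deg f\le 2t$. Indeed, $1$ lies in the interior of $\Sigma[x]_{2t}$ within the span of squares of degree $\le t$, and $f+c$ is a small perturbation of $c\cdot 1$ after scaling.

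Next I reduce to $\lambda\in\lin(K)$, exactly as in Theorem~\ref{th:gmp_int}. Write $\lambda=u+v$ with $v\in\lin(K)^\perp$. By Corollary~\ref{cor:dual_cone}, $\sum_i v_ih_i$ vanishes on $X$. In case (ii) this forces $\sum_i v_i h_i=0$ identically, because $X$ has nonempty interior. In case (i), real radicality gives $\sum_i v_ih_i\in\Ip$, and since its degree is at most $2t_{\min}\le 2t$, Lemma~\ref{le:truncated_ideal} gives $\sum_i v_ih_i\in\Ip_{2t}=\langle\p\rangle_{2t}\subset C_t$. The same argument applied to $-v$ shows that $\pm\sum_i v_ih_i\in C_t$, so $\lambda\in F_t$ if and only if $u\in F_t$. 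The objective is also unchanged, since $b\in\lin(K)$. It therefore suffices to show that $F_t\cap\lin(K)\cap\{\sum_i b_i\lambda_i\ge \dual_t-1\}$ is bounded. Being closed as well, this set is then compact, so the supremum is attained.

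For boundedness, note that every $\lambda\in F_t$ is feasible for \eqref{eq:gmp_dual}, because elements of $C_t$ are nonnegative on $X$. Moreover, $\dual_t \geq \sum_i b_i\lambda_i$ for the feasible point above, so the bound $\sum_i b_i\lambda_i\ge \dual_t-1 \ge b_1 f_{\min}-\eta$ holds for some fixed $\eta$. Then the estimate in the proof of Theorem~\ref{th:gmp_int} applies verbatim: with $T=\|\lambda\|$ and $b'=b+\varepsilon\lambda/T\in U\subset K$, writing $b'$ as a conic combination of the points $h(x^{(j)})$ and using the inequality $\sum_i h_i(x^{(j)})\lambda_i\le f(x^{(j)})\le f_{\max}$ yields $T\le ((b_1+\varepsilon)f_{\max}-b_1f_{\min}+\eta)/\varepsilon$.

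The main obstacle, in my view, is the careful handling of the degree truncation in case (i). Closedness requires Proposition~\ref{prop:moduleclosed}, which already uses the Gröbner basis property. The reduction step requires that polynomials vanishing on $X$ of degree $\le 2t$ lie in $\langle \p\rangle_{2t}$, which is where both real radicality and Lemma~\ref{le:truncated_ideal} enter. I also need to make sure that the nonemptiness of $F_t$ holds at every $t\ge t_{\min}$, and not just asymptotically.
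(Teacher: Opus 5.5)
Your skeleton matches the paper's proof, and those parts are correct. You use closedness of $C_t$ from Proposition~\ref{prop:moduleclosed}, the reduction $\lambda=u+v\mapsto u\in\lin(K)$ via real radicality and Lemma~\ref{le:truncated_ideal} (resp.\ nonempty interior), and the norm bound of Theorem~\ref{th:gmp_int}, which applies because every $\lambda\in F_t$ is feasible for \eqref{eq:gmp_dual}.

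The step that fails is your proof that $F_t\neq\emptyset$. You need this step, both for $\dual_t>-\infty$ and for the constant $\eta$. For $t\ge 1$, $1$ is \emph{not} an interior point of $\Sigma[x]_{2t}$ in $\R[x]_{2t}$: for every $\epsilon\neq 0$ the polynomial $1+\epsilon x_1$ takes negative values, so it is not a sum of squares. Accordingly $f+c$ need not be SOS for any $c$ (take $f=x_1$ or $f=-x_1^2$). So $\lambda=(-c,0,\dots,0)$ has not been shown to be feasible.

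The ball constraint you set aside is exactly what repairs this at each fixed $t$. One has $(1+R)^t-(1+\|x\|^2)^t=g_1\sum_{k=0}^{t-1}(1+R)^{t-1-k}(1+\|x\|^2)^k\in\QM_{2t}(g_1)$, and the multiplier is SOS of degree $2(t-1)$. Moreover, $\theta=(1+\|x\|^2)^t$ has a positive definite diagonal Gram matrix in the monomial basis, hence lies in the interior of $\Sigma[x]_{2t}$. Thus for small $\delta>0$ we get $(1+R)^t+\delta f=\bigl((1+R)^t-\theta\bigr)+(\theta+\delta f)\in\QM_{2t}(g_1)\subseteq C_t$, i.e.\ $\lambda=(-(1+R)^t/\delta,0,\dots,0)\in F_t$.

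Alternatively, argue as the paper does. The truncated moments of a measure representing $b$ are feasible for \eqref{eq:gmp_relax}, $\primal_t$ is finite by \cite[Lemma 5]{tacchi2022convergence}, and Proposition~\ref{prop:nogaprelax} gives $\dual_t=\primal_t>-\infty$.

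With the first fix your argument becomes self-contained and, unlike the paper's, never needs the no-duality-gap result. Finiteness of $\dual_t$ from above then follows from weak duality against a measure representing $b$.
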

\begin{proof}
The proof follows the same strategy as the one of Theorem \ref{th:gmp_int}. 
One shows that the value $\primal_t$ of the primal program \eqref{eq:gmp_relax} is lower bounded by leveraging Assumption \ref{hyp:ball} together with the fact that $h_1 \equiv 1$ and $b_1 >0$. Under these assumptions, one can show that all entries of any feasible $z$ for \eqref{eq:gmp_relax} are uniformly bounded in absolute value; see, e.g.,  \cite[Lemma 5]{tacchi2022convergence}. 

Since $\primal_t$ is lower bounded, there must exist a feasible finite $\lambda = (\lambda_1,\dots,\lambda_m)$ for \eqref{eq:dual_relax}. 
By Proposition \ref{prop:nogaprelax}, there is no duality gap between \eqref{eq:gmp_relax} and \eqref{eq:dual_relax}. 
So let us consider any feasible solution $\lambda$ with $\sum_{i=1}^m b_i \lambda_i \geq \primal_t - \eta$ for some $\eta > 0$. 
Without loss of generality, one can further restrict the decision variable $\lambda$ of \eqref{eq:dual_relax} to belong to $\lin(K)$. 
Indeed, write $\lambda = u+ v$, with $u \in \lin(K)$ and $v \in \lin(K)^\perp$. 
Then exactly as in the proof of Theorem \ref{th:gmp_int} the objective function is unchanged. 
In addition, one has
\[
\lin(K)^\perp = K^* \cap -K^* = \left\{v \in \R^m: \sum_{i=1}^m v_i h_i(x)=0 \text{ on } X \right\}.
\]

In the case (i), one has that $\sum_{i=1}^m v_i h_i$ belongs to $\Ip=(p_1,\dots,p_s)$, since $\Ip$ is real radical. 
Since $\p=\{p_1,\dots,p_s\}$ form a Gröbner basis of $\Ip$ with respect to the graded lexicographic order, Lemma \ref{le:truncated_ideal} gives that the polynomial $\sum_{i=1}^m v_i h_i \in \R[x]_{2t} \cap \Ip$ can be written as $\sum_{l=1}^s k_l' p_l$ with $\deg k_l' \leq 2 t - \deg p_l $. 
Therefore one can replace the constraint $f- \sum_{i=1}^m \lambda_i h_i \in \QM_{2t}(g_1) +  \langle \p \rangle_{2t}$ by the constraint $f- \sum_{i=1}^m u_i h_i \in \QM_{2t}(g_1) +  \langle \p \rangle_{2t}$.

In case (ii), one has that $\sum_{i=1}^m v_i h_i$ is the zero polynomial, since $X$ has nonempty interior. 
Therefore $\sum_{i=1}^m \lambda_i h_i = \sum_{i=1}^m u_i h_i$, and one can replace the constraint $f- \sum_{i=1}^m \lambda_i h_i \in \QM_{2t}(\g)$ by the constraint $f - \sum_{i=1}^m u_i h_i \in \QM_{2t}(\g)$.
Overall, the objective function and inequality constraint remain both unchanged in \eqref{eq:dual_relax}, so let us assume in the sequel that $\lambda \in \lin(K)$. 

Let $T_t := \|\lambda\|$ and $\lambda' := \lambda/T_t$. 
As in the proof of Theorem \ref{th:gmp_int}, one obtains the upper bound $T_t \leq \frac{(b_1+\varepsilon)f_{\max} -  \primal_t+\eta}{\varepsilon}$, proving that $T$ is upper bounded by a finite positive number. 
The desired result follows from Proposition \ref{prop:moduleclosed}. 
\end{proof}

\section{Duality attainment of relaxations 2: \texorpdfstring{\\ strict feasibility}{}}\label{sec:GMP_relax_density}

In this section, we present a different proof of the duality attainment of the GMP relaxation \eqref{eq:gmp_relax}, for every level $t$. We do this by using the strong duality of semidefinite programs, which we state in the following theorem (adapted in our context); see, e.g.,~\cite[Theorem 5.4]{theobald_book_2024} or \cite[Section 2.1.5]{blekherman_convexalggeo_2013} for more details.
\begin{theorem}[Strong Duality of Semidefinite Programs]\label{th:strong_duality}
If the primal program \eqref{eq:gmp_relax} is bounded and has a strictly feasible solution, then the dual program \eqref{eq:dual_relax} attains its optimal value and there is no duality gap.
\end{theorem}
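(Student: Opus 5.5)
The plan is to prove this as an instance of Lagrangian duality for a convex program with linear equality constraints and a conic constraint that admits an interior point. The strict feasibility hypothesis supplies that interior point, and the proof reduces to one separating-hyperplane argument in a finite dimensional space. Throughout, $t\ge t_{\min}$ is fixed. I write
\[
\mathcal{A}(\mom):=\bigl(M_t(\mom),\,M_{t-t_{g_1}}(g_1\mom),\dots,M_{t-t_{g_\ell}}(g_\ell\mom)\bigr)
\]
for the linear map into a product $\mathcal{S}$ of symmetric matrix spaces, and $\mathcal{S}_+$ for the product of the PSD cones. I also write
\[
\mathcal{E}(\mom):=\bigl((L_\mom(h_i)-b_i)_i,\,(M_{t-t_{p_l}}(p_l\mom))_l\bigr)
\]
for the affine equality map, and $W$ for the (finite dimensional) target space of $\mathcal{E}$, so that $\mathcal{E}(\mom)\in W$. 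With this notation, \eqref{eq:gmp_relax} reads $\inf\{L_\mom(f) : \mathcal{E}(\mom)=0,\ \mathcal{A}(\mom)\in\mathcal{S}_+\}$, and $\primal_t>-\infty$ by the boundedness assumption.

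\textbf{Step 1 (separation).} Consider the convex set
\[
C:=\{(L_\mom(f)-\primal_t+\tau,\ \mathcal{E}(\mom)) : \tau>0,\ \mathcal{A}(\mom)\in\mathcal{S}_+\}\subset\R\times W .
\]
By definition of $\primal_t$, the set $C$ does not contain $(0,0)$. I will restrict $W$ to the affine hull of the image of $\mathcal{E}$ to avoid a trivial separating functional; at this point it also helps to note that $\mathcal{E}(\mom)$ ranges over an affine subspace. A separating hyperplane then gives $(\mu_0,\lambda,Q)\ne 0$ in this reduced space such that, for all $\mom$ with $\mathcal{A}(\mom)\in\mathcal{S}_+$,
\[
\mu_0\bigl(L_\mom(f)-\primal_t\bigr)-\sum_i\lambda_i\bigl(L_\mom(h_i)-b_i\bigr)-\sum_l\langle Q_l,M_{t-t_{p_l}}(p_l\mom)\rangle\ \ge 0 .
\]
Letting $\tau\to\infty$ shows $\mu_0\ge0$.

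\textbf{Step 2 (the main obstacle: $\mu_0>0$).} This is where strict feasibility is used. Suppose $\mu_0=0$. Take the strictly feasible $\mom^\circ$, so that $\mathcal{E}(\mom^\circ)=0$ and $\mathcal{A}(\mom^\circ)\in\operatorname{int}\mathcal{S}_+$. Then every small perturbation $\mom^\circ+\delta$ still satisfies $\mathcal{A}(\mom^\circ+\delta)\in\mathcal{S}_+$. The displayed linear functional of $\delta$ vanishes at $\delta=0$ and is nonnegative in a neighbourhood, so it vanishes identically. Hence $(\lambda,Q)$ annihilates the whole image of $\mathcal{E}$, hence its affine hull, which is the reduced space. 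So $(\lambda,Q)=0$ there, contradicting $(\mu_0,\lambda,Q)\neq0$. The only delicate point is this bookkeeping of the affine hull, and I expect it to be the main technical issue. Once $\mu_0>0$ is established, we normalize $\mu_0=1$.

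\textbf{Step 3 (identifying the certificate).} After normalization, the inequality says that the linear functional
\[
\mom\mapsto L_\mom\Bigl(f-\sum_i\lambda_ih_i-\sum_l q_l p_l\Bigr)-\primal_t+\sum_i\lambda_ib_i
\]
is nonnegative on the cone $\mathcal{A}^{-1}(\mathcal{S}_+)$. Here $q_lp_l$ is the polynomial corresponding to $\langle Q_l,M(p_l\mom)\rangle$, and it lies in $\langle\p\rangle_{2t}$. Since this cone contains $0$, the constant $-\primal_t+\sum_i\lambda_ib_i$ must be $\ge0$. Moreover, the polynomial part lies in the dual cone of $\mathcal{A}^{-1}(\mathcal{S}_+)$. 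Because $\mathcal{A}$ has a point mapping into $\operatorname{int}\mathcal{S}_+$, this dual cone equals $\mathcal{A}^*(\mathcal{S}_+)$ without taking a closure; this is the standard consequence of the interior point. By the trace identities $\langle G_0,M_t(\mom)\rangle=L_\mom(v_t^TG_0v_t)$ and the analogous identities for the localizing matrices, $\mathcal{A}^*(\mathcal{S}_+)$ is exactly $\QM_{2t}(\g)$. Hence
\[
f-\sum_i\lambda_ih_i\in\QM_{2t}(\g)+\langle\p\rangle_{2t}
\qquad\text{and}\qquad
\sum_i\lambda_ib_i\ge\primal_t .
\]
So $\lambda$ is feasible for \eqref{eq:dual_relax}. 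Weak duality gives $\sum_i\lambda_ib_i\le\dual_t\le\primal_t$, so equality holds throughout. Therefore there is no duality gap, and $\lambda$ attains $\dual_t$.
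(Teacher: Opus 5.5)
Your argument is correct. It differs from the paper only in that the paper gives no proof here: it simply cites the general strong duality theorem for semidefinite programs (Theobald; Blekherman--Parrilo--Thomas). You instead rederive that theorem for this pair by the usual separation argument, and you use the strictly feasible point twice: once to force $\mu_0>0$, and once to get $(\mathcal{A}^{-1}(\mathcal{S}_+))^*=\mathcal{A}^*(\mathcal{S}_+)$ without a closure. Your affine-hull bookkeeping in Step 2 is fine, since $\mathcal{E}(\mom^\circ)=0$ makes the image of $\mathcal{E}$ a linear subspace. The citation buys brevity. Your derivation shows exactly which dual comes out: Gram matrices for $\QM_{2t}(\g)$, and ideal multipliers $q_l=v^TQ_lv$ with $\deg q_l\le 2(t-t_{p_l})$. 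The closure-free identity is itself a Slater-type fact, so give its two-line proof rather than calling it standard. If $\mathcal{A}^*(Y_k)\to c$ with $Y_k\in\mathcal{S}_+$, then $\langle Y_k,\mathcal{A}(\mom^\circ)\rangle=\langle\mathcal{A}^*(Y_k),\mom^\circ\rangle$ stays bounded, and positive definiteness of $\mathcal{A}(\mom^\circ)$ then bounds $\|Y_k\|$. Hence $\mathcal{A}^*(\mathcal{S}_+)$ is closed, and the bipolar theorem gives $(\mathcal{A}^{-1}(\mathcal{S}_+))^*=\overline{\mathcal{A}^*(\mathcal{S}_+)}=\mathcal{A}^*(\mathcal{S}_+)$.

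The one step you do not verify is weak duality $\dual_t\le\primal_t$. This is exactly where the two programs as written stop being a conic dual pair. For $\lambda$ feasible in \eqref{eq:dual_relax} you need $L_\mom(k_lp_l)=0$ for every feasible $\mom$ and every $k_lp_l\in\langle\p\rangle_{2t}$. The primal constraint $M_{t-t_{p_l}}(p_l\mom)=0$ only gives $L_\mom(p_lx^\gamma)=0$ for $|\gamma|\le 2(t-t_{p_l})$, whereas $\langle\p\rangle_{2t}$ allows $|\gamma|\le 2t-\deg p_l$, which is one more when $\deg p_l$ is odd. This genuinely breaks the statement. Take $n=1$, $p_1=x$, $g_1=R-x^2$, $f=-x^2$, $h_1\equiv1$, $b_1=1$, $t=1$. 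The primal constraints reduce to $\mom_0=1$, $\mom_1=0$, $0\le\mom_2\le R$, so $\primal_1=-R$, and $(1,0,R/2)$ is strictly feasible. Yet $-x^2=(-x)\cdot x\in\langle\p\rangle_2$ makes $\lambda_1=0$ dual feasible, so $\dual_1\ge 0>\primal_1$. So your proof, like the statement itself, is valid when every $p_l$ has even degree, or when the ideal part of the dual is restricted to $\deg k_l\le 2(t-t_{p_l})$, which is precisely the dual your Step 3 produces. This covers the case $\p=\{0\}$ where the paper invokes the theorem and the product of spheres. Your argument also applies verbatim to the reduced pair in the appendix, which has no ideal constraints.
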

Note that some sources refer to strong duality as the primal and dual having no duality gap. Recall that there is no duality gap between \eqref{eq:gmp_relax}
and \eqref{eq:dual_relax}, by Proposition \ref{prop:nogaprelax}. Hence, we show that, under the assumption $b\in\relint(K)$, there exists a measure which gives a strictly feasible solution for every level of \eqref{eq:gmp_relax}. Finding this measure is the content of Subsection \ref{ssec:msr}.

We then go on to prove dual attainment under Assumption \ref{hyp:ball}, like we did in Section \ref{sec:GMP_relax_marshall}, in two cases: when $X$ has empty and nonempty interior, i.e., when $\p\neq\{0\}$ and $\p=\{0\}$, respectively. 
Some extra work is required to show the former case, the technical details of which are in Appendices \ref{ap:algred} and \ref{ap:pfofthm}.

\subsection{Existence of a strictly feasible measure}\label{ssec:msr}

Suppose $X$ is a nonempty compact subset of $\R^n$. In this subsection, we prove that, under a relative interior assumption, we can find a positive measure $\nu$ which is feasible for \eqref{eq:gmp_primal}, and such that
\[
\int_X g \,\di \nu = 0 \iff g(x) = 0 \text{ for all $x \in X$},
\]
for any nonnegative continuous function $g \colon X \to \R_{\ge 0}$. This will then be used in the next subsection to get a Slater point for each pseudo-moment program of the associated hierarchy of semidefinite relaxations. Our proof relies on results from \cite{csiszar1975divergence} about the existence and characterization of the $\Idiv$-projection, see also \cite[Chapter X]{Nagasawa:1993} for more discussion.

Given two measures $\nu,\mu \in \mathcal{M}_+(X)$,  the notation $\nu \ll \mu$ means that $\nu$ is absolutely continuous with respect to $\mu$, that is, for every $B \in \mathscr{B}(X)$, $\mu(B) = 0$ implies $\nu(B) = 0$. 
If $\nu \ll \mu$, the corresponding density (Radon--Nikodym derivative) is denoted by $\frac{\di \nu}{\di \mu}$. 
The $\Idiv$-divergence or  Kullback-Leibler information number $\Idiv(\nu||\mu)$ is defined as 
\[
\Idiv(\nu || \mu)=
\begin{cases}
\int_X \log \big(\frac{\di \nu}{ \di \mu}\big) \di \nu = \int_X \frac{\di \nu}{ \di \mu} \log \big(\frac{\di \nu}{ \di \mu}\big) \di \mu, \;\text{ if } \nu \ll \mu, \\
+ \infty \;\text{ otherwise}.
\end{cases}
\]
For a modern exposition on information theory and relative entropy, we refer the interested reader to \cite[Chapter~5]{gray2011entropy}. Let us fix $\mu \in \mathcal{M}_+(X)$ and a convex subset $\mathcal{E}$ of $\mathcal{M}_+(X)$. If each $\nu \in \mathcal{E}$ satisfies  $\Idiv(\nu || \mu) < +\infty$, then a measure $\nu_{\proj}$ satisfying
\[
\Idiv(\nu_{\proj} || \mu) = \inf_{\nu \in \mathcal{E}} \Idiv(\nu || \mu), 
\]
is called the \emph{$\Idiv$-projection} of $\mu$ on to $\mathcal{E}$. 

Given a probability measure $\mu \in \mathcal{P}(X)$, we fix polynomials $h_1,\dots,h_m\in\R[x]$, and define 
\[
K_{\mu} := \left\{ \left(\int_X h_i \di \nu\right)_{i=1,\dots,m} :  \nu \in \mathcal{P}(X), \ \Idiv(\nu || \mu) < +\infty \right\} \, .
\]
The set $K_{\mu} \subseteq \R^m$ is convex by construction. 
Moreover, given $b \in K_{\mu}$ we denote by $\mathcal{E}(b) \subseteq \mathcal{P}(X)$ the set
\[
\mathcal{E}(b) := \left\{\nu \in \mathcal{P}(X) \colon \int_X h_i \di \nu = b_i,\;\; \text{for} \; i =1,\dots,m\right\} \, .
\]
The set $\mathcal{E}(b)$ is a convex set of probability measures. Furthermore, note that, by definition, if $b \in K_{\mu}$, then the set $\mathcal{E}(b)$ is nonempty and contains a probability measure $\nu$ such that $\Idiv(\nu || \mu) < +\infty$. Since $X$ is compact, the functions $h_i$ are bounded on $X$. In particular, if we equip $\mathcal{P}(X)$ with the topology of the total variation distance $d_{\mathrm{TV}}(\mu,\nu) = \sup_{A \in \mathcal{B}(X)}|\mu(A) - \nu(A)|$, then $\mathcal{E}(b)$ is closed by \cite[Theorem~2.1]{Billingsley:1999}. 
Hence, the following theorem follows by combining \cite[Theorems 2.1 and 3.3]{csiszar1975divergence}, see also \cite[Theorem 10.2]{Nagasawa:1993}.

\begin{theorem}\label{th:idivergence}
If $b \in K_{\mu}$, then the $\Idiv$-projection of $\mu$ on $\mathcal{E}(b)$ exists and is unique. 
Furthermore, if $b$ is in the relative interior of $K_{\mu}$ and $\nu$ denotes the $\Idiv$-projection of $\mu$ on $\mathcal{E}(b)$, then the Radon--Nikodym derivative $\frac{\di \nu}{\di \mu}$ is of the form $c \exp\left(\sum_{i=1}^m \kappa_i h_i \right)$ for some real constant $c > 0$ and $\kappa \in \R^m$.
\end{theorem}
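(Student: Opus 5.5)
We prove Theorem~\ref{th:idivergence} in two parts: existence and uniqueness of the projection, then the exponential form of its density.

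\emph{Existence and uniqueness.} Fix $b \in K_\mu$. The set $\mathcal{E}(b)$ is convex, closed in total variation, and contains some $\nu_0$ with $\Idiv(\nu_0\|\mu)<+\infty$. So $\inf_{\nu\in\mathcal{E}(b)}\Idiv(\nu\|\mu)$ is finite; it is nonnegative since both measures are probability measures. Take a minimizing sequence $\nu_k$ and write $\phi_k = \di\nu_k/\di\mu$. We use the parallelogram-type identity from Csisz\'ar's proof: with $\bar\nu=(\nu_k+\nu_l)/2\in\mathcal{E}(b)$, the quantity
\[
\tfrac12\Idiv(\nu_k\|\bar\nu)+\tfrac12\Idiv(\nu_l\|\bar\nu)
\]
equals $\tfrac12\Idiv(\nu_k\|\mu)+\tfrac12\Idiv(\nu_l\|\mu)-\Idiv(\bar\nu\|\mu)$. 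The right-hand side tends to $0$, so by Pinsker's inequality $(\nu_k)$ is Cauchy in total variation. Its limit $\nu^\ast$ lies in $\mathcal{E}(b)$ because $\mathcal{E}(b)$ is TV-closed, which is where compactness of $X$ and boundedness of the $h_i$ enter. Lower semicontinuity of $\Idiv(\cdot\|\mu)$ under TV (or setwise) convergence then shows $\nu^\ast$ attains the infimum. Uniqueness follows from strict convexity of $t\mapsto t\log t$: two minimizers $\nu,\nu'$ would give $\Idiv((\nu+\nu')/2\,\|\,\mu)$ strictly smaller unless $\phi=\phi'$ $\mu$-a.e.

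\emph{Exponential form.} Assume $b\in\relint(K_\mu)$ and let $\nu$ be the projection, with $\phi=\di\nu/\di\mu$. The key step is Csisz\'ar's characterization (Theorem 2.2 there):
\[
\Idiv(\nu'\|\mu)\ \ge\ \Idiv(\nu'\|\nu)+\Idiv(\nu\|\mu)\quad\text{for all }\nu'\in\mathcal{E}(b),
\]
with equality when $b$ is interior enough that $\nu$ is an ``algebraic inner point'' of $\mathcal{E}(b)$. This inequality comes from differentiating $s\mapsto\Idiv((1-s)\nu+s\nu'\|\mu)$ at $s=0^+$.

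First we show $\phi>0$ $\mu$-a.e. Relative interiority lets us find $\nu_1,\dots,\nu_N$ with finite divergence whose moment vectors surround $b$ in $\aff(K_\mu)$. Mixing $\nu$ with $\mu$ itself and correcting the moments with the $\nu_j$, we get an element of $\mathcal{E}(b)$ charging $\{\phi=0\}$. If that set had positive $\mu$-measure, the infinite negative derivative of $t\log t$ at $0$ would make a small perturbation decrease the divergence, a contradiction. With $\phi>0$, perturbations $\nu+s\,g\,\nu$ for bounded $g$ with $\int g\,\di\nu=0$ and $\int h_i g\,\di\nu=0$ stay in $\mathcal{E}(b)$ for small $|s|$ of both signs. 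The first-order condition is then $\int (\log\phi)\, g\,\di\nu=0$ for all such $g$. By a Lagrange / finite-codimension argument, $\log\phi$ lies in the span of $1,h_1,\dots,h_m$ in $L^1(\nu)$, i.e. $\phi=c\exp(\sum_i\kappa_i h_i)$ $\mu$-a.e.

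\emph{Main obstacle.} The hardest step is justifying the two-sided first-order condition. This needs (a) positivity of $\phi$, and (b) integrability of $\log\phi$ times bounded test functions, so that we may differentiate under the integral. In (b), $\log\phi$ can be unbounded, so we would first truncate $g$ to sets where $|\log\phi|\le M$ and let $M\to\infty$. This is exactly where the relative-interior hypothesis is consumed. Since Csisz\'ar's Theorems 2.1 and 3.3 handle these points in this generality, we would verify that their hypotheses (convex, TV-closed constraint set defined by finitely many bounded linear constraints, finite-divergence element, relative interior) hold here, and then cite them.
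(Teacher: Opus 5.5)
Your proposal is correct and ends up taking the same route as the paper. The paper simply checks that $\mathcal{E}(b)$ is convex, closed in total variation (because the $h_i$ are bounded on compact $X$), and contains an element of finite divergence, and then cites Csisz\'ar's Theorems 2.1 and 3.3.

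Your self-contained sketch of those results is also sound, with one correction to your ``main obstacle'' (b): it is not really an obstacle. Since $\Idiv(\nu\|\mu)<\infty$ and $t\log t\ge -1/e$, you already have $\phi\log\phi\in L^1(\mu)$, i.e.\ $\log\phi\in L^1(\nu)$, so no truncation is needed. Consequently the relative-interior hypothesis is consumed only in step (a), to show $\phi>0$ $\mu$-a.e., not in (b).
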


In order to apply Theorem~\ref{th:idivergence} to the GMP setting, we need to describe the relative interior of $K_{\mu}$. After statement \cite[Theorem 3.3]{csiszar1975divergence}, the author mentions in some remarks that one can show that the relative interior of $K_{\mu}$ coincides with that of the convex hull of the support of the pushforward measure $h_{\#} \mu$.
We have not been able to find any formal proof of this statement in the existing literature so we give a proof of a variant of this statement below. Our proof relies on some auxiliary results from \cite{csiszar2001convex}. In this latter paper, the authors define the so-called \emph{convex core} of a finite Borel measure $\eta \in \mathcal{M}_+(\R^n)$. 
The convex core of $\eta$, denoted by $\cc(\eta)$, is the intersection of all convex Borel sets $B$ with $\eta(B) = \eta(\R^n)$. 
The authors of \cite{csiszar2001convex} show that $\cc(\eta)$ corresponds exactly to means of probability measures, which are absolutely continuous with respect to $\eta$.
\begin{theorem}[{\cite[Theorem~3]{csiszar2001convex}}]
\label{th:cc}
Given $\eta \in \mathcal{M}_+(\R^n)$, one has 
\[
\cc(\eta) = \left\{\int_{\R^n} x \di \nu : \nu \in \mathcal{P}(\R^n), \nu \ll \eta \right\}.
\]
In addition, for every $b \in \cc(\eta)$, there exists $\nu \ll \eta$ with mean $b$ such that $\frac{\di \nu}{ \di \mu}$ is bounded. 
\end{theorem}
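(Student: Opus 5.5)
The proof will establish two inclusions. The easy one shows that every mean of a measure $\nu \ll \eta$ lies in $\cc(\eta)$. The substantial one shows that every point of $\cc(\eta)$ is the mean of some $\nu \ll \eta$ with bounded density $\frac{\di\nu}{\di\eta}$; the $\mu$ in the statement should read $\eta$. Proving the substantial inclusion with bounded densities gives both claims of the theorem at once. Throughout, "mean" means $\int x\,\di\nu$ for $\nu$ with finite first moment. If $\eta = 0$ both sides are empty, so assume $\eta \neq 0$.

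\emph{Inclusion $\supseteq$.} Let $\nu \ll \eta$ be a probability measure with mean $b$, and let $B$ be a convex Borel set with $\eta(B)=\eta(\R^n)$. Then $\eta(\R^n\setminus B)=0$, hence $\nu(\R^n \setminus B)=0$, so $\nu$ is concentrated on $B$. It remains to show that the mean of a probability measure concentrated on a convex set $B$ lies in $B$. I would prove this by induction on $\dim \aff(B)$. If $b \notin B$, separate $b$ from $B$ by a nonzero $a$ with $a\cdot x \le a\cdot b$ on $B$. Integrating gives $a\cdot b = \int a\cdot x\,\di\nu \le a\cdot b$, so equality holds and $\nu$ is concentrated on $B \cap \{a\cdot x = a\cdot b\}$. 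This is a convex set of smaller dimension, with $b$ in its affine span, so the induction hypothesis gives the contradiction $b \in B$. The separation must be taken in the relative sense, so that the hyperplane cuts the dimension of $\aff(B)$; this is routine.

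\emph{Inclusion $\subseteq$, with bounded density.} Let $M(\eta)$ be the set of means of probability measures $\nu \ll \eta$ with bounded density. It is convex, since mixtures of such measures are again such measures. I will show by induction on the dimension of the affine hull of the ambient space that $b \notin M(\eta)$ implies $b \notin \cc(\eta)$; the base case of dimension $0$ is trivial. Fix $b \notin M(\eta)$ and separate it from the convex set $M(\eta)$: choose $a \neq 0$ with $a\cdot y \le a\cdot b$ for all $y \in M(\eta)$.

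First, $\eta(\{a\cdot x > a\cdot b\})=0$. Otherwise some bounded Borel set $A \subseteq \{a\cdot x > a\cdot b\}$ has $0<\eta(A)<\infty$. The normalized restriction $\eta|_A/\eta(A)$ has bounded density and a finite mean $y$ with $a\cdot y > a\cdot b$, contradicting the choice of $a$.

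Next, set $L=\{a\cdot x = a\cdot b\}$ and $\eta' = \eta|_L$. If $\eta'=0$, the open halfspace $\{a\cdot x<a\cdot b\}$ is a convex Borel set of full $\eta$-measure not containing $b$, so $b\notin\cc(\eta)$. If $\eta'\neq0$, then $M(\eta')\subseteq M(\eta)$, since bounded densities with respect to $\eta'$ are bounded densities with respect to $\eta$. Hence $b\notin M(\eta')$, and the induction hypothesis applied inside $L$ gives $b\notin\cc(\eta')$. So there is a convex Borel $B'\subseteq L$ with $\eta'(B')=\eta'(L)$ and $b \notin B'$. The set $B = B' \cup \{a\cdot x<a\cdot b\}$ is convex and Borel, has full $\eta$-measure, and omits $b$. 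Therefore $b \notin \cc(\eta)$.

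I expect the main obstacle to be the bookkeeping in this dimension-reduction step. One must check that $B' \cup \{a\cdot x < a\cdot b\}$ is convex, which holds because $B'$ lies in the boundary hyperplane of the open halfspace. One must also check that the separation of $b$ from a possibly non-closed, lower-dimensional convex set $M(\eta)$ still yields a nonzero $a$ that makes progress. If the separation were only improper, I would first restrict to $\aff(M(\eta))$ together with $b$ and argue within that affine space.
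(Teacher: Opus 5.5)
Your proof is correct. The paper gives no argument of its own for this statement; it quotes it from \cite[Theorem~3]{csiszar2001convex}. Your proposal is therefore a self-contained replacement rather than a variant of an in-paper proof. You are right that $\frac{\di\nu}{\di\mu}$ should read $\frac{\di\nu}{\di\eta}$. You are also right to build a finite first moment into ``mean'': a bounded density alone does not give one when $\eta$ has heavy tails.

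\emph{Comparison with a constructive route.} A constructive proof would go through the convex support:
\begin{enumerate}[(i)]
\item identify $\relint\cc(\eta)$ with $\relint\cs(\eta)$;
\item write relative-interior points explicitly as convex combinations of means of normalized restrictions of $\eta$ to small balls around support points;
\item handle relative-boundary points of the core by passing to a face and recursing.
\end{enumerate}
Your argument is the dual of this. You never build the measure. You only show that a point outside the convex set $M(\eta)$ of attainable means is cut off from $\cc(\eta)$ by the explicit full-measure convex set $B'\cup\{a\cdot x<a\cdot b\}$. What this buys:
\begin{itemize}
\item It needs only finite-dimensional separation, and it proves both claims at once.
\item It gives a stronger conclusion. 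The only elements of $M(\eta)$ you ever exhibit are means of $\eta|_A/\eta(A)$ with $A$ bounded. You may therefore replace $M(\eta)$ by the convex hull of such means; this set is still convex and still satisfies $M(\eta|_H)\subseteq M(\eta)$. Hence every point of $\cc(\eta)$ is the mean of some $\nu\ll\eta$ whose density is a simple function with bounded support.
\item The same separation step also yields Proposition~\ref{pr:cs_and_cc}, which the paper imports separately. Suppose $b\in\relint\cs(\eta)$ but $b\notin\cc(\eta)$. Separate inside $\aff\cs(\eta)$. The closed halfspace $\{a\cdot x\le a\cdot b\}$ then has full measure, so it contains $\cs(\eta)$. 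Then $a\cdot x$ is maximized over $\cs(\eta)$ at a relative interior point, which forces $a$ to be constant on $\aff\cs(\eta)$, a contradiction.
\end{itemize}
What the constructive route buys instead is an explicit recipe for $\nu$ and a structural (facial) description of the core.

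\emph{On the obstacle you flag.} You do not need proper separation from $M(\eta)$, nor any restriction to $\aff(M(\eta)\cup\{b\})$. That restriction would in fact need the extra fact that $\eta$ is concentrated on $\aff M(\eta)$. Instead, state the induction for finite $\eta$ concentrated on an affine subspace $L$ of dimension $d$.
\begin{itemize}
\item If $b\notin L$, then $L$ itself is a convex set of full measure omitting $b$.
\item If $b\in L$, separate $b$ from $M(\eta)\subseteq L$ inside $L$. In finite dimension a point can always be weakly separated from any convex set not containing it. Doing this inside $L$ gives $a$ nonzero on the direction space of $L$. So $\{a\cdot x=a\cdot b\}\cap L$ has dimension $d-1$ even when $M(\eta)$ lies in that hyperplane, and the induction progresses.
\end{itemize}
The same remark settles the $\supseteq$ direction. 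There $b\in\aff(B)$ automatically, so you can separate inside $\aff(B)$.
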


\begin{proposition}\label{pr:cc_push}
The set $K_\mu$ coincides with the convex core of the pushforward measure $h_{\#} \mu$. In other words, we have the equality $K_{\mu} = \cc(h_{\#} \mu)$.
\end{proposition}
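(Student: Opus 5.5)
We prove the two inclusions $K_\mu \subseteq \cc(h_{\#}\mu)$ and $\cc(h_{\#}\mu) \subseteq K_\mu$ separately. The key device is to transport probability measures between $X$ and $\R^m$ through the polynomial map $h$. The change of variable formula $\int_{\R^m} y \,\di h_{\#}\nu(y) = \int_X h \,\di\nu$ identifies the vector of generalized moments of $\nu$ with the mean of its pushforward. In the other direction, we lift measures on $\R^m$ back to $X$ by composing densities with $h$.

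For $K_\mu \subseteq \cc(h_{\#}\mu)$, take $\nu \in \mathcal{P}(X)$ with $\Idiv(\nu\|\mu)<+\infty$. By definition $\nu \ll \mu$. Then $h_{\#}\nu \ll h_{\#}\mu$, because if $h_{\#}\mu(C)=0$ then $\mu(h^{-1}(C))=0$, hence $\nu(h^{-1}(C))=0$. Also $h_{\#}\nu$ is a probability measure on $\R^m$ with compact support, since $h(X)$ is compact, so its mean exists. By the change of variable formula this mean is $(\int_X h_i\,\di\nu)_i$. Theorem~\ref{th:cc} then places this vector in $\cc(h_{\#}\mu)$.

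For $\cc(h_{\#}\mu)\subseteq K_\mu$, take $b\in\cc(h_{\#}\mu)$. By the second part of Theorem~\ref{th:cc}, applied with $\eta=h_{\#}\mu$, there is $\tilde\nu\in\mathcal{P}(\R^m)$ with $\tilde\nu\ll h_{\#}\mu$, mean $b$, and bounded density $\rho = \di\tilde\nu/\di h_{\#}\mu$, say $0\le\rho\le M$; we read the "$\di\mu$" in that statement as $\di\eta$. Define $\nu$ on $X$ by $\di\nu := (\rho\circ h)\,\di\mu$. The change of variable formula gives
\[
\nu(X)=\int_X \rho\circ h \,\di\mu=\int_{\R^m}\rho\,\di h_{\#}\mu=\tilde\nu(\R^m)=1,
\]
so $\nu\in\mathcal{P}(X)$. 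The same formula gives
\[
\int_X h_i\,\di\nu=\int_{\R^m} y_i\,\rho(y)\,\di h_{\#}\mu(y)=\int_{\R^m} y_i\,\di\tilde\nu(y)=b_i .
\]
It remains to show $\Idiv(\nu\|\mu)<+\infty$. Clearly $\nu\ll\mu$ with density $\rho\circ h\in[0,M]$. The function $s\mapsto s\log s$ is bounded on $[0,M]$, with convention $0\log 0=0$, and $\mu$ is a probability measure. Hence $\Idiv(\nu\|\mu)=\int_X (\rho\circ h)\log(\rho\circ h)\,\di\mu$ is finite, and $b\in K_\mu$.

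The main obstacle is the second inclusion. An arbitrary $\tilde\nu\ll h_{\#}\mu$ with mean $b$ might have infinite relative entropy, and lifting it to $X$ would not produce an element of $K_\mu$. This is exactly why we use the bounded-density refinement in Theorem~\ref{th:cc}. Besides this, we need to check measurability of $\rho\circ h$, which holds because $\rho$ is Borel and $h$ is continuous. We also need the change of variable formula for the nonnegative or bounded integrands $\rho$ and $y_i\rho$, which holds since everything is supported on the compact set $h(X)$.
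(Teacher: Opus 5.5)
Your proposal is correct and follows essentially the same route as the paper. The first inclusion uses $h_{\#}\nu \ll h_{\#}\mu$, the change of variables formula and Theorem~\ref{th:cc}. The second lifts the bounded density $\rho$ from Theorem~\ref{th:cc} to $\rho\circ h$ on $X$, and its boundedness gives $\Idiv(\nu\|\mu)<+\infty$. Your additional checks are valid and fill in details the paper leaves implicit: existence of the mean from compactness of $h(X)$, boundedness of $s\log s$ on $[0,M]$, and reading $\di\mu$ as $\di\eta$ in Theorem~\ref{th:cc}.
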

\begin{proof}
To prove the inclusion $K_{\mu} \subseteq \cc(h_{\#} \mu)$, fix $b \in K_{\mu}$ and let $\nu \in \mathcal{P}(X)$ be such that $\Idiv(\nu || \mu) < +\infty$ and $b = (\int_X h_i \di \nu)_i$. Since $\Idiv(\nu || \mu) < +\infty$, we have $\nu \ll \mu$. This implies that $h_{\#} \nu \ll h_{\#} \mu$. Moreover, the change of variables formula gives
\[
b_i = \int_X h_i(x) \di \nu (x) = \int_{\R^m} y_i \ \di h_{\#} \nu(y) \, .
\]
Hence, we have $b \in \cc(h_{\#} \mu)$ by Theorem~\ref{th:cc}.

To prove the opposite inclusion, fix $b \in \cc(h_{\#} \mu)$. By Theorem~\ref{th:cc}, there exists a probability measure $\eta \in \mathcal{P}(\R^m)$ such that $\eta \ll h_{\#} \mu$, $\int_{\R^m} y \di \eta(y) = b$, and such that $g = \frac{\di \eta}{ \di h_{\#} \mu}$ is bounded. Define $\nu \in \mathcal{P}(X)$ by $\nu(B) = \int_B g( h(x)) \di \mu(x)$ for all $B \in \mathcal{B}(X)$. Note that $\nu$ is indeed a probability measure, because the change of variables formula gives
\[
1 = \int_{\R^m} \di \eta = \int_{\R^m} g(y) \ \di h_{\#} \mu(y) = \int_{X} g(h(x)) \ \di \mu(x) = \int_{X} \di \nu \, .
\]
Moreover, since $\mu(B)=0$ implies $\nu(B) = 0$, one has $\nu \ll \mu$, and the corresponding Radon--Nikodym derivative $\frac{\di \nu}{\di \mu} = g \circ h$ is bounded on $X$, so $\Idiv(\nu || \mu) < +\infty$. 
In addition, for every $i$ the change of variables formula gives
\begin{align*}
b_i &= \int_{\R^m} y_i \di \eta(y) = \int_{\R^m} y_i g(y) \ \di h_{\#} \mu(y) \\
&= \int_X h_i(x) g(h(x)) \di \mu(x) = \int_X h_i(x) \nu(x) \, .
\end{align*}
Therefore, $b \in K_{\mu}$.
\end{proof}

The authors of \cite{csiszar2001convex} also define the \emph{convex support} of a finite Borel measure $\eta \in \mathcal{M}_+(\R^n)$, denoted $\cs(\eta)$, as the intersection of all closed convex sets $B$ with $\eta(B) = \eta(\R^n)$. By definition, the convex support contains the convex core, and this inclusion may be strict as the convex core is not necessarily closed. 
Nevertheless, these sets have the same relative interiors, as stated in the next proposition.

\begin{proposition}[{\cite[Lemma~1]{csiszar2001convex}}]\label{pr:cs_and_cc}
Given $\eta \in \mathcal{M}_+(\R^n)$, the sets $\cc(\eta)$ and $\cs(\eta)$ have the same relative interiors. In particular, $\cs(\eta)$ is equal to the closure of $\cc(\eta)$.
\end{proposition}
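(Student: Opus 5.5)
The plan is to go through the affine hulls and use the standard fact that a convex set and its closure have the same relative interior (Rockafellar, Theorem 6.3). Write $C := \cc(\eta)$ and $S := \cs(\eta)$. By definition, and as noted just before the statement, $C \subseteq S$. Since $S$ is an intersection of closed convex sets, it is closed and convex, so $\overline{C} \subseteq S$. If we establish $S \subseteq \overline{C}$, we get $S = \overline{C}$, and then $\relint(S) = \relint(\overline{C}) = \relint(C)$ by the closure theorem. So everything reduces to proving $S \subseteq \overline{C}$. The degenerate case $\eta = 0$ should be checked separately: then the empty set is a convex Borel set of full measure, so both $C$ and $S$ are empty.

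To prove $S \subseteq \overline{C}$, I would show that $\overline{C}$ is one of the sets in the intersection defining $S$, i.e. that $\eta(\R^n \setminus \overline{C}) = 0$. Since $\overline{C}$ is closed and convex, its complement is a union of open half-spaces $\{x : a^T x > \beta\}$ with $\overline{C} \subseteq \{a^T x \le \beta\}$. By separation, one can take $a,\beta$ rational up to shrinking, so countably many such half-spaces suffice. It is therefore enough to show that each such open half-space $H$ is $\eta$-null. Suppose instead that $\eta(H) > 0$. Then for some $M$ the bounded set $H_M := H \cap \{\|x\| \le M\}$ also has positive measure. Let $\nu := \eta|_{H_M}/\eta(H_M)$. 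This is a probability measure with $\nu \ll \eta$ and bounded support, so its mean $\int x \, \di\nu$ exists. Every point of $H_M$ satisfies $a^T x > \beta$, so the mean satisfies $a^T\!\int x \, \di \nu > \beta$. By Theorem~\ref{th:cc}, this mean lies in $C \subseteq \{a^T x \le \beta\}$, which is a contradiction.

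This argument assumes Theorem~\ref{th:cc} applies to any $\eta$, including ones without finite first moments. The truncation to $H_M$ is exactly what makes the mean well defined. The main obstacle is the countability step that writes $\R^n \setminus \overline{C}$ as a countable union of open half-spaces disjoint from $\overline{C}$. I would argue it as follows: each point outside $\overline{C}$ is strictly separated from $\overline{C}$ by some hyperplane, and perturbing $(a,\beta)$ to rational values keeps both the point on the far side and $\overline{C}$ on the near side. This requires only a small margin, which can be obtained by shrinking $\beta$ slightly toward the point's side while keeping $\overline{C}$ inside. Alternatively, one can note that $\R^n \setminus \overline{C}$ is Lindelöf. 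With this, $\overline{C}$ is a closed convex set of full measure, so $S \subseteq \overline{C}$, and the two claims follow as described above.
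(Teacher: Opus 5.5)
The paper gives no proof of this proposition; it imports it from \cite{csiszar2001convex}. Your argument is correct once one step is repaired. The core step is sound: an open half-space $H$ missing $\overline{\cc(\eta)}$ must be $\eta$-null. Otherwise the normalized restriction of $\eta$ to a bounded piece of $H$ would have, by Theorem~\ref{th:cc}, a mean in $\cc(\eta)$ lying strictly on the far side of the boundary of $H$. The closure theorem then finishes as you say.

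The step to repair is countability. Your rational-perturbation justification is false when $\overline{\cc(\eta)}$ is unbounded. Take the half-plane $P=\{x\in\R^2 : x_2\le\sqrt{2}\,x_1\}$, which is $\overline{\cc(\eta)}$ for, e.g., a Gaussian restricted to $P$. Every closed half-space containing $P$ has normal proportional to $(-\sqrt{2},1)$, so no rational $a$ exists. Moreover, no slack in $\beta$ can absorb a tilt of $a$ along an unbounded direction. Your Lindel\"of alternative is the correct argument. The open half-spaces disjoint from $\overline{\cc(\eta)}$ cover the open, hence second countable, set $\R^n\setminus\overline{\cc(\eta)}$, so countably many of them suffice. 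Use that and drop the rational version.

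Your route derives the elementary lemma from the deeper Theorem~\ref{th:cc}. That is legitimate inside this paper, where both results are imported. Note, however, that the usual proof of the inclusion you invoke goes through $\relint\cs\subseteq\cc$. That inclusion says a probability measure concentrated on a convex Borel set has its mean in that set. So as a proof of the source lemma itself, your argument risks circularity.

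A self-contained proof needs only the same Lindel\"of step, applied to show that $\cs(\eta)$ has full measure. Then, for any convex Borel $B$ of full measure, $\overline{B\cap\cs(\eta)}$ is closed, convex, of full measure and contained in $\cs(\eta)$. By minimality it equals $\cs(\eta)$, so $\relint\cs(\eta)=\relint(B\cap\cs(\eta))\subseteq B$. This gives $\relint\cs(\eta)\subseteq\cc(\eta)\subseteq\cs(\eta)$. Both claims then follow from \cite[Theorem~6.3]{rockafellar1997convex}, with no appeal to means at all.
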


To prove the next results, we will assume that the measure $\mu$ is \emph{strictly positive}, i.e., that it satisfies $\mu(U) > 0$ for every nonempty open set $U \in \mathcal{B}(X)$. Before proceeding, we note that such a measure always exists in our setting. We refer the reader to \cite{Casteren:1994} for more information about the existence of strictly positive measures in topological spaces.

\begin{lemma}\label{le:strictly_pos}
There exists a strictly positive probability measure on $X$.
\end{lemma}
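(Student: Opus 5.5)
The plan is to build the measure as a countable convex combination of Dirac masses on a dense subset of $X$. Here $X$ is a nonempty compact subset of $\R^n$ and hence a separable metric space, so I first fix a countable dense subset $\{x^{(k)}\}_{k \ge 1} \subseteq X$. If $X$ is finite, this is just an enumeration of its points. I then define
\[
\mu := \sum_{k \ge 1} 2^{-k} \delta_{x^{(k)}},
\]
and in the finite case I renormalize the weights so that they sum to one. Since the weights are positive and sum to one, $\mu$ is a finite nonnegative Borel measure on $X$ with $\mu(X) = 1$, so $\mu \in \mathcal{P}(X)$. Countable additivity is immediate, because a countable sum of measures with summable total masses is again a measure.

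The next step is to check strict positivity. Let $U \subseteq X$ be a nonempty set that is open in the relative topology of $X$. By density of $\{x^{(k)}\}_k$ in $X$, there is some index $k$ with $x^{(k)} \in U$. Therefore $\mu(U) \ge 2^{-k} > 0$, which is exactly the definition of a strictly positive measure used above.

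There is no real obstacle. The only point needing care is that open sets are taken relative to $X$ rather than $\R^n$, and density is meant in the same relative topology, so the argument goes through verbatim. A possible alternative would be normalized Lebesgue measure, but it fails when $X$ has empty interior, for instance under Assumption \ref{hyp:ball}(i). The Dirac-sum construction avoids this problem and works for any nonempty compact $X$.
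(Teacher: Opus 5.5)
Your proposal is correct and is essentially the paper's proof: separability of the compact metric space $X$ gives a dense sequence $\{x^{(k)}\}_{k\ge 1}$, and $\mu = \sum_{k} 2^{-k}\delta_{x^{(k)}}$ charges every nonempty relatively open subset of $X$. Your separate renormalization for finite $X$ is harmless; the paper avoids the case split by implicitly allowing repetitions in the sequence.
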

\begin{proof}
Since $X$ is a compact metric space, it is separable, i.e., there exists a sequence $x^{(1)}, x^{(2)}, \dots$ of points in $X$ such that any nonempty open set $U \in \mathcal{B}(X)$ contains at least one point of the sequence, see \cite[Theorem 8.40]{giles1987topology}. Hence, $\mu = \sum_{i = 1}^{+\infty}\frac{1}{2^{i}} \delta_{x^{(i)}}$ is a strictly positive probability measure on $X$.
\end{proof}

\begin{lemma}\label{le:conv_supp}
Suppose that $\mu \in \mathcal{P}(X)$ is strictly positive. Then, the convex support of $h_{\#} \mu$ is equal to the convex hull of $h(X)$.
\end{lemma}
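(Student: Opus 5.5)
We prove the two inclusions between $\cs(h_{\#}\mu)$ and the convex hull of $h(X)$; write $\eta := h_{\#}\mu$ and $Z := h(X)$.

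\emph{First inclusion: $\cs(\eta) \subseteq$ the convex hull of $Z$.} Since $X$ is compact and $h$ is continuous, $Z$ is compact. Its convex hull in $\R^m$ is therefore compact by Carath\'eodory's theorem, hence closed and convex. Moreover
\[
\eta(Z) = \mu(h^{-1}(Z)) = \mu(X) = \eta(\R^m),
\]
so the convex hull of $Z$ is one of the closed convex sets of full $\eta$-measure. It thus appears in the intersection defining $\cs(\eta)$, which gives the inclusion.

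\emph{Second inclusion.} Let $B$ be any closed convex set with $\eta(B) = \eta(\R^m)$. It suffices to show $Z \subseteq B$, since convexity of $B$ then yields that the convex hull of $Z$ lies in $B$, and intersecting over all such $B$ gives the claim. Suppose instead some $x \in X$ has $h(x) \notin B$. Because $B$ is closed, there is an open neighborhood $V$ of $h(x)$ with $V \cap B = \emptyset$. By continuity of $h$, the set $U := h^{-1}(V) \subseteq X$ is open in $X$ and contains $x$, so it is nonempty. Strict positivity of $\mu$ gives $\mu(U) > 0$. On the other hand, $U \subseteq h^{-1}(\R^m \setminus B)$, so
\[
\mu(U) \le \eta(\R^m \setminus B) = \eta(\R^m) - \eta(B) = 0,
\]
using finiteness of $\eta$. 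This is a contradiction.

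The only step needing care is this last one: we must check that $h^{-1}(V)$ is open relative to $X$, which is exactly the topology in which strict positivity is defined, and that finiteness of $\mu$ justifies the subtraction. Both are routine, and no other step presents an obstacle.
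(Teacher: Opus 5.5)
Your proof is correct and follows essentially the same route as the paper. Compactness of the convex hull of $h(X)$ gives one inclusion, and strict positivity of $\mu$ applied to the open preimage of a neighborhood disjoint from a full-measure closed convex set gives the other; the only cosmetic difference is that you show directly that every such $B$ contains $h(X)$, whereas the paper argues by contradiction from a point $y \in h(X)\setminus \cs(h_{\#}\mu)$.
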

\begin{proof}
Let $Z$ be the convex hull of $h(X)$. Since $X$ is compact, the set $h(X)$ is also compact, and so $Z$ is also compact, see \cite[Theorem 3.20]{rudin1991functional} or \cite[Theorem 5.35]{aliprantis2006hitchhiker}. In particular, $Z$ is a closed convex set such that $1 = h_{\#} \mu(Z) = h_{\#} \mu(\R^m)$. Therefore $\cs(h_{\#} \mu) \subseteq Z$. 

Suppose that $\cs(h_{\#} \mu) \neq Z$. By convexity of $\cs(h_{\#} \mu)$, there exists a point $y \in h(X)$ such that $y \notin \cs(h_{\#} \mu)$. Hence, by the definition of $\cs(h_{\#} \mu)$, there exists a closed convex set $B \subset \R^m$ such that $h_{\#} \mu(B) = 1$ and $y \notin B$. Since $B$ is closed, there is an open set $U\ni y$ such that $U \cap B = \emptyset$. Since $y \in h(X)$, the set $h^{-1}(U)$ is an open and nonempty set in $\mathcal{B}(X)$. Hence, $\mu(h^{-1}(U)) > 0$ and $h_{\#} \mu(U) > 0$, contradicting the assumption that $h_{\#} \mu(B) = 1$. Therefore $\cs(h_{\#} \mu) = Z$.
\end{proof}

By combining the previous results, we obtain the following theorem about the existence of a special feasible point of a GMP.

\begin{theorem}
\label{th:density_gmp}
Let $X$ be a compact nonempty subset of $\R^n$, and $\mu \in \mathcal{P}(X)$ be strictly positive.  Assume that $h_1 \equiv 1$, $b_1 > 0$ and 
that $b = (b_1,\dots,b_m)$ is in the relative interior of $K$. 
Then, the primal program \eqref{eq:gmp_primal} admits a feasible solution $\nu \in \mathcal{M}_+(X)$ with $\nu \ll \mu$, and its Radon--Nikodym derivative $\frac{\di \nu}{\di \mu}$ is of the form $c \exp\left(\sum_{i=1}^m \kappa_i h_i \right)$ for some real constant $c > 0$ and $\kappa \in \R^m$.
\end{theorem}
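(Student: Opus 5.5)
Since $h_1 \equiv 1$ and $b_1 > 0$, I will first normalize and set $\bar b := b/b_1$, so $\bar b_1 = 1$. A measure $\nu$ is feasible for \eqref{eq:gmp_primal} iff $\nu = b_1 \bar\nu$ with $\bar\nu \in \mathcal{E}(\bar b)$, a probability measure. Multiplying the density by $b_1$ only changes the constant $c$, so it suffices to produce $\bar\nu \in \mathcal{E}(\bar b)$ with $\bar\nu \ll \mu$ and exponential density. This will follow from Theorem~\ref{th:idivergence} once I show that $\bar b \in \relint(K_\mu)$.

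To identify $\relint(K_\mu)$, I will chain the earlier results. By Proposition~\ref{pr:cc_push}, $K_\mu = \cc(h_{\#}\mu)$. By Proposition~\ref{pr:cs_and_cc}, $\relint \cc(h_{\#}\mu) = \relint \cs(h_{\#}\mu)$. By Lemma~\ref{le:conv_supp}, using strict positivity of $\mu$, $\cs(h_{\#}\mu) = \operatorname{conv}(h(X)) =: Z$. Hence $\relint(K_\mu) = \relint(Z)$.

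It remains to show that $\bar b \in \relint(Z)$; this is the step I expect to need the most care. By Proposition~\ref{pr:tchakaloff}, $K = \cone(h(X))$, and since $h_1 \equiv 1$ every point of $h(X)$ lies in the hyperplane $H := \{y : y_1 = 1\}$. Therefore $K \cap H = Z$: a conic combination $\sum \alpha_j h(x^{(j)})$ with first coordinate $1$ has $\sum \alpha_j = 1$, so it is a convex combination. Also, $\lin(K) \cap H = \aff(Z)$. Indeed, $\lin(K) = \Span h(X)$, and a vector $\sum \beta_j h(x^{(j)})$ with $\sum \beta_j = 1$ is an affine combination.

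Now $b \in \relint(K)$ gives an $\varepsilon$-ball $B(b,\varepsilon) \cap \lin(K) \subset K$. Scaling by $1/b_1$ gives $B(\bar b, \varepsilon/b_1) \cap \lin(K) \subset K$, since $K$ and $\lin(K)$ are cones. Intersecting with $H$ yields $B(\bar b, \varepsilon/b_1) \cap \aff(Z) \subset Z$, and $\bar b \in Z$. Hence $\bar b \in \relint(Z) = \relint(K_\mu)$. In particular $\bar b \in K_\mu$.

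Finally, I will apply Theorem~\ref{th:idivergence} with $\bar b$. The $\Idiv$-projection $\bar\nu$ of $\mu$ on $\mathcal{E}(\bar b)$ exists and satisfies $\Idiv(\bar\nu\|\mu) < \infty$, so $\bar\nu \ll \mu$. Its density is $c' \exp(\sum_i \kappa_i h_i)$. Setting $\nu := b_1\bar\nu$ gives $\int h_i \,\di\nu = b_i$ for all $i$, $\nu \in \mathcal{M}_+(X)$, and density $b_1 c' \exp(\sum_i \kappa_i h_i)$, which is the claimed form with $c = b_1 c' > 0$.
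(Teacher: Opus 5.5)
Your proof is correct and follows the same route as the paper: reduce to $b_1 = 1$, transfer relative interiority from $K$ to the convex hull of $h(X)$ via the hyperplane $\{y_1 = 1\}$, then chain Lemma~\ref{le:conv_supp}, Proposition~\ref{pr:cs_and_cc} and Proposition~\ref{pr:cc_push} to get $\bar b \in \relint(K_\mu)$, and conclude with Theorem~\ref{th:idivergence}. Your explicit checks that $K \cap H = Z$ and $\lin(K) \cap H = \aff(Z)$, together with the rescaling by $b_1$ at the end, make rigorous the step the paper compresses into its brief ``$\alpha b$'' remark.
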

\begin{proof}
Without loss of generality we assume that $b_1=1$. Since $h_1 \equiv 1$, the set $h(X)$ is included in the hyperplane $\{y \in \R^m \colon y_1 = 1\}$. As $b$ is in the relative interior of $K$, it is also in the relative interior of the conic hull of $h(X)$ by Proposition~\ref{pr:tchakaloff}. Hence, there exists $\alpha > 0$ such that $\alpha b$ is in the relative interior of the convex hull of $h(X)$. Since $b_1 = 1$ and the convex hull of $h(X)$ is included in $\{y \in \R^m \colon y_1 = 1\}$, we have that $\alpha = 1$. Hence, $b$ is in the relative interior of the convex hull of $h(X)$. By Lemma~\ref{le:conv_supp} this implies that $b$ is in the relative interior of the convex support of $h_{\#} \mu$, which by Proposition~\ref{pr:cs_and_cc} implies that $b$ is in the relative interior of the convex core of $h_{\#} \mu$. This in turn implies, by Proposition~\ref{pr:cc_push}, that $b$ is in the relative interior of $K_{\mu}$. The claim then follows from Theorem~\ref{th:idivergence}, namely, from the fact that $\nu\in \mathcal{E}(b)$.
\end{proof}

\begin{corollary}\label{cor:feasible_measure}
Let $X$ be a compact nonempty subset of $\R^n$. Assume that $h_1 \equiv 1$, $b_1 > 0$ and that $b = (b_1,\dots,b_m)$ is in the relative interior of $K$. Then, the primal program \eqref{eq:gmp_primal} admits a feasible solution $\nu \in \mathcal{M}_+(X)$ such that for any nonnegative continuous function $g \colon X \to \R_{\ge 0}$ we have 
\[
\int_X g \,\di \nu = 0 \iff g(x) = 0 \text{ for all $x \in X$} \, .
\]
\end{corollary}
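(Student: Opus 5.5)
We obtain $\nu$ from Theorem~\ref{th:density_gmp}, using a strictly positive reference measure $\mu$. Lemma~\ref{le:strictly_pos} provides a strictly positive probability measure $\mu \in \mathcal{P}(X)$. Applying Theorem~\ref{th:density_gmp} with this $\mu$ gives a feasible $\nu \in \mathcal{M}_+(X)$ for \eqref{eq:gmp_primal} with $\nu \ll \mu$ and
\[
\frac{\di \nu}{\di \mu} = \rho := c \exp\Big(\sum_{i=1}^m \kappa_i h_i\Big), \qquad c>0 .
\]
The key observation is that $\rho$ is continuous on the compact set $X$ and strictly positive. Hence there is a constant $\rho_{\min} := \min_X \rho > 0$.

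The implication ``$\Leftarrow$'' is trivial: if $g \equiv 0$ on $X$, then $\int_X g \,\di\nu = 0$.

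For ``$\Rightarrow$'' we argue by contraposition. Let $g \ge 0$ be continuous on $X$ with $g(x_0) > 0$ for some $x_0 \in X$. By continuity, there is a relatively open neighbourhood $U \subseteq X$ of $x_0$ on which $g > g(x_0)/2$. Since $\mu$ is strictly positive, $\mu(U) > 0$. Therefore
\[
\int_X g \,\di\nu = \int_X g \rho \,\di\mu \ \ge\ \int_U g\rho \,\di\mu \ \ge\ \frac{g(x_0)}{2}\,\rho_{\min}\,\mu(U) > 0 .
\]
This is the desired contradiction.

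There is no serious obstacle here. The only point to check is that ``strictly positive'' is understood with respect to the relative topology of $X$, so that the open set $U$ above qualifies. This is exactly how the notion is used in Lemma~\ref{le:strictly_pos} and Lemma~\ref{le:conv_supp}. The uniform lower bound on the exponential density is what makes the argument work.
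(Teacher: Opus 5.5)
Your proof is correct and follows the paper's argument. You take a strictly positive $\mu$ from Lemma~\ref{le:strictly_pos}, apply Theorem~\ref{th:density_gmp}, and lower-bound $\int_U g\,\di\nu$ by a positive multiple of $\mu(U)>0$ on a neighbourhood $U$ of a point where $g>0$. The only cosmetic difference is that you bound $g$ and the density $c\exp(\sum_i \kappa_i h_i)$ separately, using the density's positive minimum on compact $X$, whereas the paper uses continuity of their product at that point directly.
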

\begin{proof}
Let $\mu$ be any strictly positive probability measure on $X$ (such a measure exists by Lemma~\ref{le:strictly_pos}). Let $\nu$ be the measure constructed from $\mu$ in Theorem~\ref{th:density_gmp} and let $g \colon X \to \R_{\ge 0}$ be any nonnegative continuous function. If $g(x) = 0$ for all $x \in X$, then $\int_X g(x) \di \nu(x) = 0$. Conversely, suppose that $g(x) \neq 0$ for some $x \in X$. Then, there exist $\varepsilon > 0$ and an open set $U \in \mathcal{B}(X)$ such that $x \in U$ and $cg(y)\exp(\sum_{i = 1}^m \kappa_i h_i(y)) > \varepsilon$ for all $y \in U$. In particular, $\int_X g \di \nu = \int_X cg\exp(\sum_{i = 1}^m \kappa_i h_i) \di \mu \ge \int_U cg\exp(\sum_{i = 1}^m \kappa_i h_i) \di \mu \ge \varepsilon \mu(U) > 0$.
\end{proof}

\subsection{Strong duality of semidefinite programs}\label{ssec:GMP_relax_density_proof}

In this subsection, we present an alternative proof of Theorem \ref{th:attainSOS}, which comes from an application of the strong duality of semidefinite programs and uses the strictly positive measure from the previous subsection. We repeat the statement for convenience. 

\begin{theorem}[Repetition of Theorem \ref{th:attainSOS}]
\label{th:attainSOS2}
Let $X$ be a compact subset of $\R^n$ satisfying Assumption \ref{hyp:ball}. 
Suppose that either
\begin{enumerate}[(i)]
    \item The ideal $\Ip=(p_1,\dots,p_s)$ is real radical, and that $\p=\{p_1,\dots,p_s\}$ forms a Gröbner basis of $\Ip$ with respect to the graded lexicographic order, or
    \item $\p=\{0\}$ and $X$ has nonempty interior. 
\end{enumerate}
Assume further, that $h_1\equiv1$, $b_1 > 0$, and that $b = (b_1,\dots,b_m)$ is in the relative interior of $K$. 
Then the semidefinite program \eqref{eq:dual_relax} attains its optimal value for all $t \geq t_{\min}$.
\end{theorem}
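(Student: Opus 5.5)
The plan is to apply Theorem~\ref{th:strong_duality}: show that \eqref{eq:gmp_relax} is bounded and has a strictly feasible point, after which dual attainment follows. Boundedness comes from Assumption~\ref{hyp:ball} together with $h_1\equiv 1$ and $b_1>0$. Under these hypotheses every feasible $\mom$ has uniformly bounded entries (as in \cite[Lemma 5]{tacchi2022convergence}), so $L_\mom(f)$ is bounded below. For the Slater point, take the measure $\nu$ from Corollary~\ref{cor:feasible_measure} and let $\mom$ be its truncated moment sequence of order $2t$. This $\mom$ is feasible: $L_\mom(h_i)=b_i$, the matrices $M_t(\mom)$ and $M_{t-t_{g_j}}(g_j\mom)$ are PSD, and $M_{t-t_{p_l}}(p_l\mom)=0$ because $p_l$ vanishes on $X\supseteq\operatorname{supp}\nu$.

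\emph{Case (ii)}, where $\p=\{0\}$ and $X$ has nonempty interior. For a vector $c$ with $c^TM_t(\mom)c=0$, set $q=\sum_\alpha c_\alpha x^\alpha$. Then $\int_X q^2\,\di\nu=0$, so Corollary~\ref{cor:feasible_measure} gives $q^2\equiv 0$ on $X$. A polynomial vanishing on a set with nonempty interior is zero, hence $c=0$. The localizing matrices are handled the same way: $c^TM_{t-t_{g_j}}(g_j\mom)c=\int_X g_jq^2\,\di\nu=0$ forces $g_jq^2\equiv0$ on $X$. On a small ball inside $X$ where $g_j>0$ this gives $q=0$ on the ball, hence $q=0$. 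I expect this to need the interior point $x'$ with all $g_j(x')>0$ used in Proposition~\ref{prop:moduleclosed}(ii); failing that, one can use that $X$, being closed with nonempty interior, contains an open set where each nonzero $g_j$ is generically positive. So $\mom$ is strictly feasible and Theorem~\ref{th:strong_duality} applies.

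\emph{Case (i)}, where $X\subseteq V_\R(\Ip)$ has empty interior. This is the main obstacle. The full matrix $M_t(\mom)$ is singular, since every $q\in\Ip_t$ lies in its kernel, so \eqref{eq:gmp_relax} has no strictly feasible point as written. I will pass to the algebraically reduced pair of programs. Using Lemma~\ref{le:truncated_ideal}, write $\R[x]_t=\Span_\R(\mathcal{B}_t)\oplus\Ip_t$ and parametrize $\mom$ by the values $L_\mom$ on a basis of $\R[x]_{2t}/\Ip_{2t}$. This turns the constraint $M_{t-t_{p_l}}(p_l\mom)=0$ into an identity. The moment matrix restricted to $\mathcal{B}_t$ becomes $M'_t(\mom)$, and the localizing matrix for $g_1$ is restricted to $\mathcal{B}_{t-1}$. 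The dual of this reduced SDP is exactly the reformulation $f-\sum_i\lambda_ih_i\in\QM'_{2t}(g_1)+\langle\p\rangle_{2t}$ from the proof of Proposition~\ref{prop:moduleclosed}(i), which has the same optimal set in $\lambda$ as \eqref{eq:dual_relax}.

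Strict feasibility of the reduced primal then goes as in case (ii). If $c^TM'_t(\mom)c=0$ with $q\in\Span_\R(\mathcal{B}_t)$, then $q$ vanishes on $X=V_\R(\Ip)$ (the ball constraint is redundant). Real radicality puts $q$ in $\Ip\cap\R[x]_t=\Ip_t$, and together with $q\in\Span_\R(\mathcal{B}_t)$ this gives $q=0$. The same argument works for $g_1q^2$, using $g_1>0$ on $V_\R(\Ip)$. Theorem~\ref{th:strong_duality}, applied to the reduced pair, then yields attainment of \eqref{eq:dual_relax}. The delicate bookkeeping is checking that the reduction is an exact equivalence of primal-dual pairs, with the same $b$-constraints (some $h_i$ may reduce modulo $\Ip$) and the same dual feasible $\lambda$'s. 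This is the step I would write out carefully, deferring the linear-algebra details to the reduction in Appendix~\ref{ap:algred}.
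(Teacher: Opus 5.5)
Your proposal is correct and follows essentially the same route as the paper's second proof. In case (ii), the moments of the exponential-density measure from Corollary~\ref{cor:feasible_measure} give a Slater point directly. In case (i), real radicality and the quotient basis $\mathcal{B}_t$ make the same moments a Slater point of the reduced program, and SDP strong duality plus the identification of the reduced dual's feasible $\lambda$'s with those of \eqref{eq:dual_relax} give attainment.

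One small remark: you do not need, and when some $p_l$ has odd degree may not have, an exact equivalence of the primal programs. It suffices that reduced feasible points extend to feasible points of \eqref{eq:gmp_relax} with the same objective value, which gives boundedness of the reduced primal exactly as in Proposition~\ref{pr:reduced_programs}.
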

\begin{proof}
\textit{(ii)}  By Theorem \ref{th:density_gmp}, if $X$ has nonempty interior, we have a measure in the relative interior of cone of feasible solutions, i.e., a strictly feasible point. Taking the moments of this measure, up to degree $2t$, gives a strictly feasible solution to the semidefinite program \eqref{eq:gmp_relax}, which is the dual of \eqref{eq:dual_relax}. To see this, let $\mom\in \R^{\N_{2t}^n}$ be the truncated moment sequence of the measure $\nu$ from Corollary~\ref{cor:feasible_measure}. We then have that  $p^{T}M_{t-t_{g_j}}(g_j \mom)p = \int_X g_jp^2 \di \nu=0$, if and only if $p=0$, by the fact that $X$ has nonempty interior and $g_j\geq 0$ on $X$. So the matrices are positive definite and $\mom$ is strictly feasible. For details on manipulation of moment matrices see \cite[Lemma 4.2]{laurent_survey_2009} and its proof.

Thus, by the strong duality of semidefinite programs the supremum in \eqref{eq:dual_relax} is attained when the value of the primal program \eqref{eq:gmp_relax} is lower bounded. 
This latter lower bound statement has been already shown in the proof of Theorem~\ref{th:attainSOS1}. 

\textit{(i)} Since the nonempty interior assumption is not satisfied in this case, the argument in the proof of item \textit{(ii)} does not immediately apply. To use the strong duality of semidefinite programs, we need that a solution of the primal \eqref{eq:gmp_relax} has a strictly feasible solution. However, this condition does not hold for \eqref{eq:gmp_relax}, when $\p\neq\{0\}$. To remedy this, we use a similar proof technique to that of \cite[Lemma 1]{henrion2012innerapprox}. Namely, we formulate a reduced semidefinite program using the equality constraints defining $X$, which upper bounds \eqref{eq:gmp_relax}. We then show this reduced problem has a strictly feasible solution, which allows us to apply strong duality. This is the content of Appendix \ref{ap:algred}, which then allows us to complete the proof in Appendix \ref{ap:pfofthm}.
\end{proof}

\section{Applications of main results}\label{sec:apps}

In this section, we explore applications of Theorem \ref{th:gmp_int} and Theorem \ref{th:attainSOS}.

\subsection{Product of spheres}\label{ssec:sphs}

We show an application of Theorem \ref{th:attainSOS} to a specific set with empty interior, the product of spheres. Two technical lemmas are needed to satisfy the algebraic assumptions of Theorem \ref{th:attainSOS} in this case. We present them in Appendix \ref{ap:spheres_alg}.

We fix some notation. Consider the tuple $(n_1,\dots,n_s)\in\N^s$, and set $N= \sum_{l =1}^s n_l$. We assume that $n_l\geq 2$ for all $l=1,\dots,s$. Set $x_l = (x_{l1},\dots x_{ln_l})$, for each $l=1,\dots,s$, and $x = (x_1 , \dots , x_s) \in  \R^N$. We denote the polynomial ring over these variables with the shorthand $\R[x]$. The monomials of this ring are indexed by $\N^{n_1+\dots + n_s}$, which we denote by $\mathcal{N}$. For the truncation to degree $t$, write $\mathcal{N}_t$. We set $\mathbb{S}^{n_l-1} = \{x_l\in \R^{n_l}\mid 1 - \|x_l\|^2 = 0\}$ as the unit sphere in $\R^{n_l}$, $\varphi_l = 1 - \|x_l\|^2$, and $\bm{\varphi}_s = (\varphi_1,\dots,\varphi_s)$. We refer to the following set as the \emph{product of spheres},
\begin{equation}\label{eq:prodofsphs}
X = V_{\R}(\varphi_1,\dots,\varphi_s) = \mathbb{S}^{n_1-1}\times \dots \times \mathbb{S}^{n_s-1} \subset \R^{n_1}\times \dots \times \R^{n_s}=\R^N.
\end{equation}

Note that one does not need the redundant ball constraint from Assumption \ref{hyp:ball} (i). 
Similarly to the proof of Theorem~\ref{th:attainSOS1}, the value $\primal_t$ of the primal program \eqref{eq:gmp_relax} is lower bounded by leveraging the multiple sphere constraints together with $h_1 \equiv 1$ and $b_1 >0$. 
In this case all entries of any feasible $z$ for \eqref{eq:gmp_relax} are uniformly bounded in absolute value by \cite[Lemma 5]{tacchi2022convergence}. 

\begin{corollary}\label{co:attainSOS_sphs}
Let $X$ be the product of spheres, as in \eqref{eq:prodofsphs}. Assume that $h_1 \equiv 1$, $b_1 > 0$ and that $b = (b_1,\dots,b_m)$ is in the relative interior of $K$. Then the semidefinite program \eqref{eq:dual_relax}, with $\p= \bm{\varphi}_s$, attains its optimal value, for all $t \geq t_{\min}$.
\end{corollary}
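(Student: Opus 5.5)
The plan is to reduce Corollary~\ref{co:attainSOS_sphs} to case (i) of Theorem~\ref{th:attainSOS}, taking $\p=\bm{\varphi}_s$. The only new work is checking the algebraic hypotheses of that case and handling the ball constraint, which is absent here. So three things have to be verified: the ideal $I(\bm{\varphi}_s)=(\varphi_1,\dots,\varphi_s)$ is real radical; $\{\varphi_1,\dots,\varphi_s\}$ is a Gröbner basis for the graded lexicographic order; and the boundedness of the feasible set of \eqref{eq:gmp_relax} holds without a ball constraint. Once these are in place, either proof of Theorem~\ref{th:attainSOS} goes through verbatim.

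\textbf{Gröbner basis.} The leading term of $\varphi_l$ under $<_{\grlex}$ is $-x_{l1}^2$ (up to sign), a pure power of the first variable of block $l$. The blocks use disjoint variables, so these leading monomials are pairwise coprime. Buchberger's criterion then says every $S$-polynomial reduces to zero, which gives the Gröbner basis property.

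\textbf{Real radicality.} This is the step I expect to be the main obstacle. I would first argue that each $(\varphi_l)\subset\R[x_l]$ is real radical. Since $n_l\ge 2$, the polynomial $\varphi_l$ is irreducible, and it changes sign on $\R^{n_l}$. By the sign-changing criterion for principal ideals generated by an irreducible polynomial, $(\varphi_l)$ is then real radical, and its real zero set $\mathbb{S}^{n_l-1}$ is Zariski dense in $V_{\C}(\varphi_l)$. For the product, I would show $I(\bm{\varphi}_s)$ is prime: its quotient ring is the tensor product of the domains $\R[x_l]/(\varphi_l)$, and each of these is geometrically integral (it stays a domain over $\C$ because $n_l\ge2$). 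The product of spheres contains a nonsingular real point, since each $\nabla\varphi_l\ne0$ on its sphere. The real Nullstellensatz criterion then applies: a real prime ideal whose variety has a smooth real point of full dimension is real radical. This gives $\sqrt[\R]{I}=I$. An alternative is a direct argument: a polynomial vanishing on $X$ is reduced modulo the Gröbner basis to a normal form of degree at most $1$ in each $x_{l1}$, and vanishing on the product of spheres forces this normal form to be zero. I would fall back on this if the prime-ideal route gets messy.

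\textbf{Boundedness.} On $X$ one has $\|x\|^2=s$, so $g_1=R-\|x\|^2$ with $R>s$ would be strictly positive on $V_{\R}(I)$. More directly, the text preceding the corollary already notes via \cite[Lemma 5]{tacchi2022convergence} that the sphere constraints together with $h_1\equiv1$, $b_1>0$ bound every feasible $z$. That is exactly what the proofs of Theorems~\ref{th:attainSOS1} and~\ref{th:attainSOS2} use the ball for.

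With all three hypotheses checked, Theorem~\ref{th:attainSOS}(i) yields attainment of \eqref{eq:dual_relax} for every $t\ge t_{\min}$.
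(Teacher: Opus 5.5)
Your proposal is correct. Its skeleton matches the paper's: reduce to Theorem~\ref{th:attainSOS}(i), check the Gröbner property exactly as in Lemma~\ref{le:grobner_basis} (pairwise coprime leading monomials $x_{l1}^2$), and replace the ball constraint by the boundedness from \cite[Lemma 5]{tacchi2022convergence}, as the paper does just before the corollary.

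The genuine difference is in the proof of real radicality.
\begin{itemize}
\item The paper's Lemma~\ref{le:realrad} applies the ``prime ideal with a smooth real point'' argument only to a single sphere. It then inducts on $s$ by freezing blocks, and asserts, somewhat informally, that the resulting quotients depend polynomially on the frozen variables.
\item You instead get primality of the whole ideal from $\R[x]/I(\bm{\varphi}_s)\cong\bigotimes_l\R[x_l]/(\varphi_l)$ with geometrically integral factors, and invoke the simple point criterion once. This removes the induction and the coefficient-comparison step, at the cost of citing two standard but nontrivial facts.
\item One wording fix: state the criterion for a \emph{prime} ideal. It reads: if the Jacobian of the generators has rank equal to the height $s$ at some real zero, the ideal is real. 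Writing ``a real prime ideal'' presupposes the conclusion.
\end{itemize}

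Your fallback argument is the most elementary, and it fits the paper's setup best.
\begin{itemize}
\item After division by the Gröbner basis, a polynomial vanishing on $X$ leaves a remainder $\sum_{S\subseteq\{1,\dots,s\}}\prod_{l\in S}x_{l1}\,r_S(x')$, where $x'$ collects the remaining coordinates.
\item Substitute $x_{l1}=\epsilon_l\sqrt{1-\|x_l'\|^2}$ for every $\epsilon\in\{\pm1\}^s$, multiply by $\prod_{l\in T}\epsilon_l$, and sum over $\epsilon$. This isolates $r_T$ and shows it vanishes on the open set where $\|x_l'\|<1$ for all $l$, so $r_T=0$.
\item The conclusion is $\mathcal{I}(X)=I(\bm{\varphi}_s)$ directly. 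That is exactly the form in which real radicality is used in both proofs of Theorem~\ref{th:attainSOS}.
\end{itemize}
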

\begin{proof}
By Lemma \ref{le:realrad} and Lemma \ref{le:grobner_basis}, the real radical and Gröbner basis assumptions of Theorem \ref{th:attainSOS} are satisfied, and the result follows.
\end{proof}

\subsection{Background for application problems}\label{ssec:apps_background}

To describe the problems to which we will apply our results, we briefly introduce some notions from tensor theory and quantum information theory. References for tensor theory include \cite{hogben_linalghandbook_2014,comon_symtensors_2008,landsberg_tensors_2012}, and references for the relevant problems from quantum information theory include \cite{nielsen_qit_2000,watrous_qit_2018}.

\subsubsection{Tensors}

Let $a \in \N$. 
Given $n_1,\dots, n_a \in \N$, we define a real tensor $\mathcal{A}$ of order $a$, as an element of the space $\R^{n_1\times \dots \times n_a}$. Suppose $n_1=\dots= n_a =n\in \N$, then $\mathcal{A}$ is called symmetric, if it is invariant by permutation of its indices. We say a tensor $\mathcal{Q} \in \R^{n_1\times \dots \times n_a}$ is rank one, if it can be written as the outer product of unit vectors $u_i \in \R^{n_i}$ times a nonzero scalar $q$, i.e.,
\begin{equation}\label{rankonetensor}
	\mathcal{Q} = q u_1 \otimes \dots \otimes u_a, \; q\in \R \setminus \{0\}, \; \|u_i\| = 1 \;\text{for all}\; i =1,\dots, a.
\end{equation}
For a symmetric rank one tensor $\mathcal{Q}$, one can write
\[
\mathcal{Q} = q u \otimes \dots \otimes u = q u^{\otimes a}, \; \text{where} \; q \in \R\setminus\{0\}, \; \|u\| = 1.
\]
For two tensors $\mathcal{A}, \mathcal{A}' \in \R^{n_1\times \dots \times n_a}$, the Hilbert Schmidt inner product is given by
\[\langle \mathcal{A},\mathcal{A}'\rangle = \sum_{j_1=1}^{n_1} \; \dots \sum_{j_a=1}^{n_a} \; \mathcal{A}_{j_1\dots j_a}\; \mathcal{A}'_{j_1\dots j_a}.\]
We can associate a polynomial to a tensor $\mathcal{A} \in \R^{n_1\times \dots \times n_a}$ through the inner product, 
\begin{equation}\label{eq:tensor_poly}
    \mathcal{A}(x) = \;\langle \mathcal{A}, x_1 \otimes \dots \otimes x_a \rangle \; = \;\sum_{j_1=1}^{n_1} \dots \sum_{j_a =1}^{n_a}\; \mathcal{A}_{j_1\dots j_a} \; x_{1 j_1} \dots x_{a j_a}.
\end{equation}
Note that, the polynomial above is multilinear in each vector $x_i=(x_{i 1},\dots,x_{i n_i})$. 

In the symmetric case, $\mathcal{A}(x)$ is a homogeneous polynomial of degree $a$ in $n$ variables,
\begin{equation}\label{eq:tensor_poly_sym}
    \mathcal{A}(x) = \;\langle \mathcal{A}, x^{\otimes a} \rangle \; = \;\sum_{j_1,\dots,j_a=1}^{n} \mathcal{A}_{j_1\dots j_a} \; x_{j_1} \dots x_{j_a}.
\end{equation}
By dehomogenizing, we also have a one to one association between order $a$ symmetric tensors in $n$ dimensions and polynomials in of degree less than or equal to $a$ in $n-1$ variables. In other words, we set $x_n=1$ in \eqref{eq:tensor_poly_sym}, and write
\begin{equation}\label{eq:tensor_poly_sym_dehomo}
\Tilde{\mathcal{A}}(x) = \mathcal{A}(x_1,\dots,x_{n-1},1).
\end{equation}

\subsubsection{Quantum information}

Given $n \in \N$, let $\mathcal{H}^n = \{A \in \C^{n\times n} \; | \; A=A^*\}$ be the space of $n\times n$ Hermitian matrices, where $A^*$ is the the conjugate transpose of $A$.  We denote by $\D(\C^n) \subset \mathcal{H}^n$ the set of \emph{normalized quantum states}, i.e., Hermitian matrices with nonnegative eigenvalues and trace equal to one.

A \emph{quantum bipartite state} $\rho$ is an element of the space $\mathcal{H}^n \otimes \mathcal{H}^n$. The cone of \emph{separable bipartite states} is defined as
\begin{equation}\label{eq:SEP_def}
\begin{aligned}
\sep = \cone\Big\{\rho\in \mathcal{H}^n \otimes \mathcal{H}^n \; | \;
\rho = \sum_{\ell=1}^r w_{\ell} \qu^{(\ell)} {\qu^{(\ell)}}^{*}  \otimes \qv^{(\ell)} {\qv^{(\ell)}}^{*} &,\\
w_{\ell} \geq 0, \qu^{(\ell)},\qv^{(\ell)}\in \C^n,\; r\in\N &\Big\}
\end{aligned}
\end{equation}
For a separable bipartite state $\rho$, the smallest $r$, such that the equality in \eqref{eq:SEP_def} holds, is called the \emph{separable rank} of $\rho$.

\subsubsection{GMP with complex variables}\label{ssec:complex_vars}

To formulate problems arising from quantum information theory, we introduce formalism for GMPs over the complex numbers.

A polynomial $f\in \C[x,\overline{x}]$ has the form $f(x,\overline{x}) = \sum_{\alpha,\beta} f_{\alpha,\beta}x^{\alpha}\overline{x}^{\beta}$, i.e., $f$ is a polynomial in $2n$ variables $x_1,\dots,x_n,\overline{x}_1,\dots,\overline{x}_n$. Such a polynomial $f$ defines a function
\[
\begin{aligned}
\C^n &\longrightarrow \C \\
c &\longmapsto f(c,\overline{c}),
\end{aligned}
\]
which we denote by $f(x)$. We say a polynomial $f$ is \emph{Hermitian}, if it is real valued, i.e., $f(c) = \Real(f(c,\overline{c}))$ for all $c\in \C^n$. This allows us to formulate a GMP instance with complex variables,
\begin{equation}\label{eq:gmp_primal_Csphs}
\begin{aligned}
    &\inf_{\mu} \;\; \int_X f \di \mu \\
    &\st \; \int_X h_i \di \mu  = b_i, \ i=1,\dots,m, \\
    & \qquad \int_X \overline{h}_i \di \mu  = \overline{b}_i, \ i=1,\dots,m, \\
    & \qquad \mu \in \mathcal{M}_+(X),
\end{aligned}
\end{equation}
with $f,h_1,\dots,h_m\in\C[x,\overline{x},y,\overline{y}]$, $f$ Hermitian, $b_1,\dots,b_m \in \C$, and $X\subset\C^{2n}$. 
As before, we have the dual problem,
\begin{equation}\label{eq:gmp_dual_Csphs}
\begin{aligned}
    & \sup_{\lambda} \;\; \sum_{i=1}^m \Big(b_i \lambda_i + \overline{b}_i \overline{\lambda}_i \Big) \\
    & \st \; f - \sum_{i=1}^m\Big( h_i\lambda_i + \overline{h}_i\overline{\lambda}_i \Big)\geq 0 \;\;\text{on}\; X, \\
    & \qquad \lambda\in \C^m.
\end{aligned}
\end{equation}
The problems \eqref{eq:gmp_primal_Csphs} and \eqref{eq:gmp_dual_Csphs} are over the complex numbers. However, using the procedure in \cite[Appendix A.1]{gribling2022momdps}, they can be changed, piece by piece, to be over the real numbers. 
Let $\ib = \sqrt{-1} \in \C$. 
Polynomials over $\C[x, \overline{x}, y, \overline{y}]$ can be changed into polynomials over $\R[x_{\Real},x_{\Imaginary},y_{\Real},y_{\Imaginary}]$ by applying the change of variables 
\begin{equation}\label{eq:change_vars}
x = x_{\Real} + \ib x_{\Imaginary} \qquad y = y_{\Real} + \ib y_{\Imaginary}. 
\end{equation}
This can be readily extended to vectors and matrices by applying the corresponding operations entrywise. 
Thus, we write 
\begin{equation*}
\begin{aligned}
h(x,\overline{x},y,\overline{y}) &= \Real(h(x,\overline{x},y,\overline{y})) + \ib \Imaginary(h(x,\overline{x},y,\overline{y})) \\
&= h_{\Real}(x_{\Real},x_{\Imaginary},y_{\Real},y_{\Imaginary}) + \ib h_{\Imaginary}(x_{\Real},x_{\Imaginary},y_{\Real},y_{\Imaginary}),
\end{aligned}
\end{equation*}
where $h_{\Real},h_{\Imaginary}\in\R[x_{\Real},x_{\Imaginary},y_{\Real},y_{\Imaginary}]$. Note that for $f$ Hermitian, we have $f=\Real(f)$, and so $f$ and $f_{\Real}$ define the same functions on $\C^n$, but live in different polynomial rings. 

Finally, note that the change of variables in \eqref{eq:change_vars} describes a bijection between $\C^{2n}$ and $\R^{4n}$, where 
\[(x,y) = (x_{\Real}+\ib x_{\Imaginary},y_{\Real}+\ib y_{\Imaginary}) \; \longmapsto \; (x_{\Real},x_{\Imaginary},y_{\Real},y_{\Imaginary}).\]
Set $X_{\Real}\subset\R^{4n}$ as the image of $X$ under this bijection. For a measure $\mu$ on $X$, we set $\mu_{\Real}$ to be equal to the pushforward of $\mu$ by this bijection. This allows us to write the problems \eqref{eq:gmp_primal_Csphs} and \eqref{eq:gmp_dual_Csphs} over real variables, as follows
\begin{equation}\label{eq:gmp_primal_CtoRsphs}
\begin{aligned}
    &\inf_{\mu_{\Real}} \;\; \int_X f_{\Real} \, \di \mu_{\Real} \\
    &\st \; \int_X h_{i,\Real} \,\di \mu_{\Real}  = \Real(b_i), \ i=1,\dots,m \\
    & \qquad \int_X h_{i,\Imaginary} \, \di \mu_{\Real}  = \Imaginary(b_i), \ i=1,\dots,m \\
    & \qquad \mu_{\Real} \in \mathcal{M}_+(X_{\Real}), 
\end{aligned}
\end{equation}
with the dual problem,
\begin{equation}\label{eq:gmp_dual_CtoRsphs}
\begin{aligned}
    & \sup_{\lambda_{\Real}, \lambda_{\Imaginary}} \;\; \sum_{i=1}^m \Big(\Real(b_i)\lambda_{i,\Real} + \Imaginary(b_i)\lambda_{i,\Imaginary} \Big)\\
    & \st \; f - \sum_{i=1}^m \Big( h_{i,\Real}\lambda_{i,\Real} + h_{i,\Imaginary}\lambda_{i,\Imaginary}\Big) \geq 0\;\; \text{on} \; X_{\Real}, \\
    & \qquad \lambda_{\Real}, \lambda_{\Imaginary}\in\R^m.
\end{aligned}
\end{equation}

\subsection{Hierarchies over products of spheres}\label{ssec:apps_sphs}

We introduce three problems, their associated GMPs, and the applications of Theorem \ref{th:gmp_int} and Corollary \ref{co:attainSOS_sphs} to them.

Entanglement is a key property in quantum information theory, thus detecting whether a given state is entangled or separable is an important question. The Doherty Parrilo Spedalieri (DPS) hierarchy, first introduced in \cite{doherty2002dps}, is a hierarchy of outer approximations of the cone of separable bipartite states. See \cite[Section 4]{fang2021sphere} for a concise introduction. In quantum information literature, the formulation of the DPS hierarchy is usually given in tensor notation over the complex numbers. In \cite{gribling2022momdps,dressler2022hermitian,li2020separability}, the following GMP formulation of the DPS hierarchy is studied:
\begin{equation}\label{eq:dps}
\begin{aligned}
&\inf_{\mu}  \; \; \int_X 1 \di\mu  \qquad\qquad\qquad\qquad &&\sup_{\Lambda}  \; \; \Tr (\rho \Lambda) \\
&\st \;  \int_X xx^* \otimes yy^* \di\mu =\rho &&\st \; 1 - (x \otimes y)^* \Lambda (x \otimes y) \geq 0 \text{ on } X \\
&\qquad \mu \in\mathcal{M}_+(X) && \qquad \Lambda \in \mathcal{H}^n,
\end{aligned}
\end{equation}
where $X$ is the product of two complex spheres,
\begin{equation}\label{eq:complex_sphs}
X=\mathbb{S}^{n-1}_{\C}\times\mathbb{S}^{n-1}_{\C},\;\; \text{with} \;\mathbb{S}^{n-1}_{\C} = \{x\in \C^n \; | \; 1-\|x\|^2 = 0\}.
\end{equation}
The real analog of $X$ is the product of two spheres, denoted by
\[
\begin{aligned}
X_{\Real} &= \mathbb{S}^{2n-1} \times \mathbb{S}^{2n-1}\\
&= \Big\{(x_{\Real},x_{\Imaginary},y_{\Real},y_{\Imaginary})\in \R^{4n}\; | &&1-\|x_{\Real}\|^2 -\|x_{\Imaginary}\|^2 =0\\
&\; &&1-\|y_{\Real}\|^2 -\|y_{\Imaginary}\|^2 =0\Big\}.
\end{aligned}
\]
In the notation of \eqref{eq:gmp_primal}, we have $h_{ij,kl}=x_i \overline{x}_j y_k \overline{y}_l$ and  $b_{ij,kl}=\rho_{ij,kl}$, where $i,j,k,l\in \{1,\dots,n\}$. See \cite[Section 5.1]{britz2025CLDUI}, for a summary of the construction of this GMP formulation. 

The second problem we consider comes from quantum optimal transport theory.  
The classical Wasserstein distance essentially measures the minimal cost required to transform one probability distribution into another. 
To adapt this concept to the quantum setting, various definitions of quantum Wasserstein distances have been introduced; see, for example, the works of \cite{nielsen06} and \cite{de2021quantum}. 
In a recent study \cite{chhatoi2025wasserstein}, the authors consider the order 2 quantum Wasserstein distance proposed by \cite{beatty2025order}, and specifically formulate it as a GMP instance. Given two normalized quantum states $\tau, \omega \in \D(\C^n)$, a \textit{quantum transport plan} between $\tau$ and $\omega$ is a finite set of triples $\{(w_{\ell}, \qu_{\ell}, \qv_{\ell})\}_{\ell}$ such that 
\begin{align}
\sum_{\ell} w_{\ell} \, \qu^{(\ell)} {\qu^{(\ell)}}^* = \tau \quad \text{ and} \quad\sum_{\ell} w_{\ell} \,  \qv^{(\ell)} {\qv^{(\ell)}}^* = \omega, 
\end{align}
where $w_{\ell} > 0$, $\sum_{\ell} w_{\ell} =1$, $\qu^{(\ell)}, \qv^{(\ell)} \in \C^n$,  $\|\qu^{(\ell)}\|=\| \qv^{(\ell)}\| = 1$. 
Let $\quantum(\tau,\omega)$ be the set of all such quantum transport plans between two states $\tau$ and $\omega$. 
Given a plan $\xi=\{(w_{\ell}, \qu^{(\ell)}, \qv^{(\ell)})\}_{\ell}$, the order $2$ \textit{quantum transport cost} is defined as 
\begin{align}
T_2(\xi) := \sum_{\ell} w_{\ell} \Tr \left( (\qu^{(\ell)} {\qu^{(\ell)}}^* - \qv^{(\ell)} {\qv^{(\ell)}}^*) \overline{(\qu^{(\ell)} {\qu^{(\ell)}}^* - \qv^{(\ell)} {\qv^{(\ell)}}^*)} \right),
\end{align}
and the order $2$ quantum Wasserstein distance is given by
\begin{align}
\label{eq:qwtwo}
W_2(\tau,\omega) := \Big(\inf_{\xi \in \quantum(\tau,\omega)} T_2(\xi)\Big)^{1/2}.
\end{align}
In \cite[Section 3]{chhatoi2025wasserstein}, the precise moment formulation of the Wasserstein distance, and its conversion to real variables, are presented. 
It is shown in \cite[Theorem 8]{chhatoi2025wasserstein} that $W_2^2(\tau,\omega)$ is the optimal value of the following GMP instance:
\begin{equation}
\label{eq:qwgmp}
\begin{aligned}
W^2_2(\tau,\omega) = \inf_{\mu} \quad  & \int_{X} f \di  \mu   \\	
\st 
\quad & \int_{X} x x^* \di  \mu = \tau, \\
\quad & \int_{X} y y^* \di  \mu = \omega, \\
\quad & \mu \in \mathcal{M}_+(X),
\end{aligned}
\end{equation}
where $f(x,\overline{x},y,\overline{y})= \Tr\left((x x^* - y y^*) \overline{(x x^* - y y^*)}\right)$, and $X$ is again the product of two complex spheres, as in \eqref{eq:complex_sphs}. In the notation of \eqref{eq:gmp_primal}, we have $h_{ij}=x_i \overline{x}_j$ with $b_{ij}=\tau_{ij}$, and $h_{kl}=y_k \overline{y}_l$ with $b_{kl}=\omega_{kl}$, where $i,j,k,l\in \{1,\dots,n\}$.
The dual of \eqref{eq:qwgmp} is 
\begin{equation}
\label{eq:qwdual}
\begin{aligned}
\sup_{\Lambda,\Gamma} \quad  & \Tr (\tau \Lambda + \omega \Gamma) \\	
\text{s.t.}
\quad & f - x^* \Lambda x -  y^* \Gamma y \geq 0 \text{ on } X, \\
\quad & \Lambda,\Gamma \in \mathcal{H}^n.
\end{aligned}
\end{equation}

For the final application, we consider the problem of finding the best rank one approximation of a given tensor $\mathcal{A} \in \R^{n_1\times \dots \times n_a}$. Setting $\mathcal{X} = q \,x_1 \otimes \dots \otimes x_a$ as an indeterminate rank one tensor, the problem can be formulated as,
\begin{equation}\label{eq:bestrankone}
\begin{aligned}
&\min_{\mathcal{X}} \; \|\mathcal{A} - \mathcal{X}\|^2 \\
&\st\;\; \rank(\mathcal{X})= 1 \\
& \qquad \; \mathcal{X} \in \R^{n_1\times \dots \times n_a}.
\end{aligned}
\end{equation}
In \cite{nie_rankone_2014}, this is transformed into the following GMP instance and its dual polynomial optimization problem,
\begin{equation}\label{eq:bestrankone_gmp}
\begin{aligned}
&\inf_{\mu}  \; \; \int_X \mathcal{A}(x) \di\mu  \qquad\qquad &&\sup_{\lambda} \; \; \lambda \\
&\st \;  \int 1 \di\mu = 1 &&\st \; \mathcal{A}(x) - \lambda \geq 0 \text{ on } X \\
&\qquad \mu \in\mathcal{M}_+(X) && \qquad \lambda \in \R,
\end{aligned}
\end{equation}
where $X$ is the product of real spheres, as in \eqref{eq:prodofsphs}. In the notation of \eqref{eq:gmp_primal}, we have $h\equiv1$ with $b=1$. Note that the primal problem in \eqref{eq:bestrankone_gmp} is referred to in the literature as a moment problem, i.e., not `generalized'. This does not interfere with the application of our results to it.

Recall that GMPs over the product of complex spheres can be changed to be over real variables, and hence over the product of real spheres, as described in Subsection \ref{ssec:complex_vars}. 
Thus, by Theorem \ref{th:gmp_int} and Corollary \ref{co:attainSOS_sphs}, under the assumption of $b$ being in the relative interior of $K$, the GMP formulations of the three problems presented above, and the relaxations thereof, have duality attainment.
\begin{corollary}\label{cor:sphs_app}
Under the relative interior assumption in Corollary \ref{co:attainSOS_sphs}, the 3 problems presented in this subsection have duality attainment. More specifically:
\begin{itemize}
    \item Suppose the bipartite state $\rho$ is in the relative interior of the cone of separable states, i.e., $\rho\in\relint(\sep)$. Then the GMP formulation of the DPS hierarchy, as in \eqref{eq:dps}, and the relaxations thereof, have duality attainment.
    \item Suppose the states $\tau$ and $\omega$ are in the relative interior of the cone of normalized states $\mathcal{D}(\C^n)$, i.e., $\tau$ and $\omega$ are positive definite. Then the GMP formulation of the order 2 quantum Wasserstein distance, as in \eqref{eq:qwgmp}, and the relaxations thereof, have duality attainment.
    \item Given a tensor $\mathcal{A} \in \R^{n_1\times \dots \times n_a}$, the GMP formulation to find its best rank one approximation, as in \eqref{eq:bestrankone_gmp}, and the relaxations thereof, have duality attainment.
\end{itemize}
\end{corollary}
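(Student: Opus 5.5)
The plan is to reduce each of the three items to Theorem \ref{th:gmp_int} (infinite dimensional attainment) and Corollary \ref{co:attainSOS_sphs} (attainment at every relaxation order). First, the complex GMPs \eqref{eq:dps} and \eqref{eq:qwgmp} are rewritten over the real product of spheres $X_{\Real}=\mathbb{S}^{2n-1}\times\mathbb{S}^{2n-1}$ via the change of variables \eqref{eq:change_vars}, as in Subsection \ref{ssec:complex_vars}. This yields instances of the form \eqref{eq:gmp_primal_CtoRsphs}, whose constraint polynomials are the real and imaginary parts $h_{i,\Real},h_{i,\Imaginary}$ and whose right-hand sides are $\Real(b_i),\Imaginary(b_i)$.

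Two hypotheses must then be checked for each problem. \emph{First}, a constraint $h_1\equiv 1$ with $b_1>0$ must be available. For the tensor problem this is explicit. For DPS, $\sum_{i,k} x_i\overline{x}_i y_k\overline{y}_k=\|x\|^2\|y\|^2\equiv 1$ on $X$. For Wasserstein, $\sum_i x_i\overline{x}_i\equiv 1$ on $X$. So the constant $1$ is a linear combination of the $h$'s modulo the ideal of $X$; since $\varphi_1,\varphi_2$ form a real radical Gröbner basis (Lemmas \ref{le:realrad}, \ref{le:grobner_basis}), we may add $h_1\equiv1$ with $b_1=\Tr\rho$ (resp.\ $\Tr\tau=1$) without changing either the GMP or, degree permitting, the relaxations. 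The new constraint is redundant in the feasible set of measures, and the relaxations are unchanged because the added constraint is implied through the ideal constraints $M_{t-1}(\varphi_l\mom)=0$.

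\emph{Second}, and this is the main obstacle, $b\in\relint(K)$ must be verified, with $K$ the conic hull of $h(X)$ by Proposition \ref{pr:tchakaloff}.
\begin{itemize}
\item \textbf{DPS.} The map $(x,y)\mapsto xx^*\otimes yy^*$ on $X$ has conic hull exactly $\sep$ (normalize $x,y$ and absorb the norms into the weights). Hence $K\cong\sep$ under the real linear identification $b\mapsto(\Real b,\Imaginary b)$, and $\rho\in\relint(\sep)$ is precisely the hypothesis.
\item \textbf{Tensors.} Here $K=\R_{\ge0}$ and $b=1$ lies in its interior.
\item \textbf{Wasserstein.} $K$ is the conic hull of $\{(xx^*,yy^*)\}$ with $\|x\|=\|y\|=1$. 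This is $\{(A,B)\succeq0 \colon \Tr A=\Tr B\}$: given such a pair with common trace $c>0$, take $\mu=c\,\mu_A\otimes\mu_B$ for decompositions $A/c=\sum w_\ell x_\ell x_\ell^*$ and $B/c=\sum v_k y_k y_k^*$. Its linear span is the hyperplane $\{\Tr A=\Tr B\}$ of $\mathcal{H}^n\times\mathcal{H}^n$. The relative interior within that hyperplane consists of pairs of positive definite matrices of equal trace, so $(\tau,\omega)$ with $\tau,\omega\succ0$ qualifies.
\end{itemize}

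One must also make sure the relative interior is insensitive to the real reparametrization and to the redundant $h_1$. Both are linear bijections onto their images, so relative interiors correspond.

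With these checks in place, Theorem \ref{th:gmp_int} gives attainment of the infinite dimensional duals \eqref{eq:gmp_dual_CtoRsphs}, and Corollary \ref{co:attainSOS_sphs} gives attainment of every relaxation \eqref{eq:dual_relax} with $\p=\bm\varphi_s$. The only delicate points are the identification of $K$ for the Wasserstein problem and the harmless insertion of the constant constraint $h_1\equiv1$.
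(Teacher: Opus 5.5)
Your proposal is correct and follows the paper's route: rewrite the complex instances over the real product of spheres, then invoke Theorem \ref{th:gmp_int} and Corollary \ref{co:attainSOS_sphs}. It is more complete than the paper, which only asserts the conclusion ``under the assumption of $b$ being in the relative interior of $K$''; you also produce the constant constraint $h_1\equiv 1$ for DPS and Wasserstein, and you identify $K\cong\sep$ and $K=\{(A,B)\succeq 0 : \Tr A=\Tr B\}$, which turns the stated hypotheses into $b\in\relint(K)$. You should still state that attainment for the dual with the extra multiplier $\lambda_0$ transfers back to the original dual via $\Lambda\mapsto\Lambda+\lambda_0 I$, because $1-\|x\|^2\|y\|^2=\|y\|^2\varphi_1+\varphi_2$ (resp.\ $1-\|x\|^2=\varphi_1$) lies in $\langle\bm{\varphi}_s\rangle_{4}$, and that $b_1=\Tr\rho>0$ since $\rho\in\relint(\sep)$ is a nonzero positive semidefinite matrix.
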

Note that \cite[Theorem 9]{chhatoi2025wasserstein} shows duality attainment of \eqref{eq:qwgmp}, but not of its relaxations.

\subsection{Hierarchies over sets with nonempty interior}\label{ssec:apps_nonempty}

We briefly introduce the problem of positive symmetric tensor decomposition and an associated GMP instance. We then show the application of Theorem \ref{th:gmp_int} and Theorem \ref{th:attainSOS}.

In \cite{gamertsfelder2025effective}, the problem of positive symmetric tensor decomposition is considered. Finding the positive decomposition of an order $a$ symmetric tensor $\mathcal{A}\in \R^{n+1 \times \dots \times n+1}$ consists of finding the $v_1,\dots,v_{\ell}\in \R^{n+1}$ and $w_1,\dots,w_{\ell} > 0$ with
\[
\mathcal{A}= \sum_{i=1}^{\ell} w_i v_i^{\otimes a},
\]
such that $\ell$ is minimal. Note that, if all the diagonal entries of $\mathcal{A}$ are zero and $\mathcal{A}$ is not zero, such a decomposition cannot exist. Hence, we assume that at least one nondiagonal entry is nonzero, and, without loss of generality, we suppose that the entry $\mathcal{A}_{n+1 \dots n+1}\neq0$.

In \cite[Section 3.1]{gamertsfelder2025effective}, this problem is handled by solving a GMP instance over the real ball, $X=\mathbb{B}^n = \{x\in \R^n \;|\; 1-\|x\|^2 \geq 0\}$. While the GMP instance is not equivalent to the tensor decomposition problem, the solution of the GMP can be used to retrieve a positive symmetric tensor decomposition. The connection between the tensor problem and a GMP instance is made by using the unique correspondence between the order $a$ symmetric tensor $\mathcal{A}\in \R^{n+1 \times \dots \times n+1}$ and the polynomial $\Tilde{\mathcal{A}}(x)$, as in \eqref{eq:tensor_poly_sym_dehomo}. Note that $\Tilde{\mathcal{A}}(x)$ is a polynomial in $n$ variables and is of degree at most $a$. With $\Psi$ being a generic SOS polynomial, the GMP and its dual are formulated as,
\begin{equation}\label{eq:tensor_decomp}
\begin{aligned}
&\inf_{\mu}  \; \; \int_X \Psi\, \di\mu  \qquad\qquad\qquad\qquad &&\sup_{\lambda}  \; \; \langle \Tilde{\mathcal{A}}(x),\sum_{\alpha\in \N^n_{a}}  x^{\alpha}\lambda_\alpha\rangle_a \\
&\st \;  \int_X x^{\alpha}\, \di\mu = \langle \Tilde{\mathcal{A}}(x),x^{\alpha}\rangle_a &&\st \; \Psi - \sum_{\alpha\in \N^n_{a}}  x^{\alpha}\lambda_\alpha \geq 0 \text{ on } X \\
&\qquad \mu \in\mathcal{M}_+(X) && \qquad \lambda \in \R^{\N^n_a},
\end{aligned}
\end{equation}
where the apolar product is defined as,
\[
\langle f,g \rangle_a = \sum_{\alpha\in \N^n_a} \begin{pmatrix}
a \\
\alpha
\end{pmatrix}^{-1} f_{\alpha} g_{\alpha},\;\;
\text{with} \;\; \begin{pmatrix}a \\ \alpha \end{pmatrix} = \frac{a!}{(a-|\alpha|)! \alpha_1! \dots \alpha_n!}.
\]
In the notation of \eqref{eq:gmp_primal}, we have $h_{\alpha}=x^{\alpha}$ with $b_{\alpha}= \langle \Tilde{\mathcal{A}}(x),x^{\alpha}\rangle_a$, where $\alpha\in \N^n_a$. Note that, we have $b_0=\langle \Tilde{\mathcal{A}}(x),x^{(0,\dots, 0)}\rangle_a = \mathcal{A}_{n+1 \dots n+1}$, which can be scaled to be equal to one.

To apply Theorem \ref{th:gmp_int} and Theorem \ref{th:attainSOS}, note that in this case
\[
K = \left\{ \left(\int_X x^{\alpha} \di \mu\right)_{\alpha\in\N^n_a} :  \mu \in \mathcal{M}_+(X) \right\},
\]
which is normally referred to as the truncated moment cone.
\begin{corollary}
Given an order $a$ symmetric tensor $\mathcal{A}\in\R^{n+1\times\dots\times n+1}$, and its associated polynomial $\Tilde{\mathcal{A}}(x)$, as in \eqref{eq:tensor_poly_sym_dehomo}, define the sequence \[\mom_{\mathcal{A}} = \Big(\langle \Tilde{\mathcal{A}}(x),x^{\alpha}\rangle_a \Big)_{\alpha\in \N^n_a}.
\]
Suppose $\mom_{\mathcal{A}}$ is in the relative interior of the truncated moment cone $K$. Then the GMP associated to the positive symmetric tensor decomposition problem for $\mathcal{A}$, as in \eqref{eq:tensor_decomp}, and the relaxations thereof, have duality attainment.
\end{corollary}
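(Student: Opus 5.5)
The plan is to cast the tensor decomposition GMP \eqref{eq:tensor_decomp} as an instance of \eqref{eq:gmp_primal} and then check the hypotheses of Theorem \ref{th:gmp_int} and of Theorem \ref{th:attainSOS}(ii). We set $f = \Psi$, $h_\alpha = x^\alpha$ for $\alpha \in \N^n_a$, and $b_\alpha = \langle \Tilde{\mathcal{A}}(x), x^\alpha\rangle_a$, so that $b = \mom_{\mathcal{A}}$. The constraint indexed by $\alpha = 0$ has $h_0 = x^0 \equiv 1$, and it plays the role of $h_1$. Its right-hand side is $b_0 = \mathcal{A}_{n+1\dots n+1}$.

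\textbf{Sign of $b_0$.} We need $b_0 > 0$, and the assumption only says $\mathcal{A}_{n+1\dots n+1} \neq 0$. This is where the relative interior hypothesis is used. By Proposition \ref{pr:tchakaloff}, $K$ is the conic hull of $h(X)$. Every point of $h(X)$ has first coordinate $1$, so every element of $K$ has $b_0 \ge 0$. If $b_0 = 0$, the element would have to be the zero vector. But $0 \notin \relint(K)$, because $K$ is pointed and nontrivial: it contains $h(x)$ for any $x \in X$. Hence $\mom_{\mathcal{A}} \in \relint(K)$ forces $b_0 > 0$. Equivalently, one may rescale $\mathcal{A}$ so that $b_0 = 1$, as the paper remarks; the rescaling preserves the relative interior condition because $K$ is a cone.

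\textbf{Infinite dimensional problem.} With $b_0 > 0$, $h_0 \equiv 1$ and $b \in \relint(K)$, Theorem \ref{th:gmp_int} applies directly. It shows that the dual in \eqref{eq:tensor_decomp} attains its optimal value. The objective there, $\langle \Tilde{\mathcal{A}}, \sum_\alpha x^\alpha \lambda_\alpha\rangle_a$, equals $\sum_\alpha b_\alpha \lambda_\alpha$ by bilinearity of the apolar product, so this dual is exactly \eqref{eq:gmp_dual} for our instance.

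\textbf{Relaxations.} We describe $X = \mathbb{B}^n$ via Assumption \ref{hyp:ball}(ii) with $\ell = 1$ and $g_1 = 1 - \|x\|^2$, that is, $R = 1$. The set $X$ is compact and has nonempty interior, since it contains the open unit ball. We take $\p = \{0\}$. Theorem \ref{th:attainSOS}(ii) then gives attainment of \eqref{eq:dual_relax} for every $t \ge t_{\min}$, where $t_{\min} = \max\{1, \lceil a/2\rceil, \lceil \deg\Psi/2\rceil\}$.

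The only step needing care is the sign of $b_0$ and the matching of $h_0$ with the normalisation $h_1 \equiv 1$ required by the theorems; everything else is a direct application of results already proved.
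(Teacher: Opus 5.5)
Your proposal is correct and follows the paper's route: you identify $f=\Psi$, $h_\alpha=x^\alpha$, $b=\mom_{\mathcal{A}}$, take $X=\mathbb{B}^n$ with $g_1=1-\|x\|^2$ and $\p=\{0\}$, and invoke Theorem \ref{th:gmp_int} and Theorem \ref{th:attainSOS}(ii). Your extra step, that $\mom_{\mathcal{A}}\in\relint(K)$ already forces $b_0>0$ because $K$ is pointed and nonzero so $0\notin\relint(K)$, is a welcome tightening: the paper only remarks that $b_0$ ``can be scaled to one'', which tacitly requires $b_0>0$.
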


Note that \cite[Theorem 3.6]{gamertsfelder2025effective} shows the attainment of the infinite dimensional dual GMP. 
This is also shown in \cite[Proposition 3.6]{nie_atruncKprob_2014}, under the assumption of `strict $X$ positivity' on the Riesz functional $L_{\mom_{\mathcal{A}}}$, meaning
\[
L_{\mom_{\mathcal{A}}}(p) > 0 \;\; \text{for all} \;\; p\in \R[x]_a \setminus \{0\} \;\; \st \;\; p \geq 0 \;\; \text{on} \;\; X.
\]
As mentioned in Remark \ref{rk:gmp_int}, this coincides with our condition regarding membership in the relative interior of $K$.

\section{Conclusion and perspectives}\label{sec:conclusion}

We studied duality attainment for the Generalized Moment Problem (GMP) with polynomial input data and measures supported on basic compact semialgebraic sets. Under a relative interior condition on the prescribed moment vector, we proved attainment of the infinite dimensional GMP dual. We also established dual attainment for every level of the associated moment–SOS hierarchy. 
For the latter, we leveraged two distinct proof strategies. 
Our first strategy relies on establishing that the truncated quadratic module involved in each dual SOS program is closed. 
Our second strategy relies on entropy-based constructions and $\Idiv$-projection techniques to build feasible exponential densities, which yield strictly feasible points for the primal pseudo-moment relaxations. 
The results apply both when the support set has nonempty interior and when it is contained in the vanishing set of prescribed polynomials forming a Gröbner basis of the ideal they generate, which is assumed to be real radical. 
They can be readily applied to various GMP instances arising from tensor approximation and quantum information. 

Our framework is currently restricted to GMP instances involving finitely many linear equality constraints and measures supported on sets described by polynomial inequalities. 
Several directions deserve further investigation. 

A first extension concerns GMPs involving multiple measures, often used in the context of semialgebraic set characterization, and analysis and control of nonlinear systems with polynomial dynamics. 
In this case, the underlying optimization problems often involve infinitely (countably) many linear equality constraints. 
When considering such instances, the existence of strictly feasible solutions at every relaxation level does not necessarily imply that the infinite dimensional GMP dual is attained, see, e.g., \cite{henrion2009approximate}. 
One subclass of interest would be GMP instances whose input data exhibit correlative sparsity; see, e.g., \cite[Chapter 4]{lasserre_momentsandapps_2010} or \cite[Chapter 3]{magron2023sparse}. 
For such instances one should be able to develop dual attainment guarantees compatible with sparse moment–SOS hierarchies. 

Eventually, one could naturally extend our framework to GMP instances involving measures supported on sets described by polynomial matrix inequalities. 
One such instance is presented in \cite[Section 3]{gribling2022momdps} for the problem of finding the best separable rank of a given separable bipartite state. A sequence of semidefinite relaxations of this GMP give a hierarchy of lower bounds on the best separable rank. This GMP, and the relaxations thereof, are over the product of two complex balls.
After a translation of the GMP, stated in \cite{gribling2022momdps}, to the form of \eqref{eq:gmp_primal}, one could likely apply our results  to show duality attainment. 

\appendix

\section{Algebraic results}\label{ap:alg_results}

The following appendices contain technical algebraic results and constructions used to complete the proof Subsection \ref{ssec:GMP_relax_density_proof}. In proving these results, a detailed formulation of an algebraic reduction of \eqref{eq:gmp_relax} and \eqref{eq:dual_relax} was needed. Such a formulation, in the case of the real vanishing set $V_{\R}(\Ip)$ being finite, is constructed in \cite[Section 2]{laurent2007varieties}. It is also mentioned, at a more abstract level, for the general case in \cite[Sections 3 and 4]{marshall_optim_2003}. It is also discussed in \cite[Section 6.2]{laurent_survey_2009}. To our knowledge, a detailed formulation of an algebraic reduction for a general vanishing set is not available in the literature. Thus, we deal with the technical details of such a formulation in Appendix \ref{ap:algred}.

Appendix \ref{ap:pfofthm} is then dedicated to finishing the proof of Theorem \ref{th:attainSOS2} started in Subsection \ref{ssec:GMP_relax_density_proof}, using the above mentioned algebraic reduction. Finally, Appendix \ref{ap:spheres_alg} contains technical lemmas used in the proof of Corollary \ref{co:attainSOS_sphs}, which deals with the special case of the product of spheres.

\subsection{Algebraic reduction of the GMP relaxations}\label{ap:algred}

We consider the case when the set $X$ has empty interior and $\p \neq\{0\}$.  We describe an algebraic reduction of \eqref{eq:gmp_relax} using the equality constraints $p_1,\dots,p_s=0$. For ease of notation, we fix $I:=\Ip = (p_1,\dots,p_s)$. 

Note that in Assumption \ref{hyp:ball}, we only have only one inequality constraint, which is redundant. The reduction described below is valid outside the duality attainment context of this paper and so we maintain generality and keep an arbitrary number of inequality constraints given by $g_1\dots,g_\ell$, as in the notation of \eqref{eq:dual_relax}.

The truncation of the polynomial ring $\R[x]_t$ is a finite dimensional real vector space. Since we are interested in $X= V_\R(I)$, we work over the quotient ring $\R[x]/I$, which is the set of elements of the form $f + I = \{f + k \mid k \in I\}$. To define a truncation of $\R[x]/I$, we first recall the truncation of the ideal $I$ as,
\[
I_{t} := I \cap \R[x]_t.
\]
Note that, one can consider the set $\langle \p \rangle_t = \{\sum_{l=1}^s k_l p_l  \mid \deg(k_l p_l) \leq t\}$ as another way to truncate $I$, which is equal to $I_t$ when the $p_l$'s form a Gröbner basis, see Lemma \ref{le:truncated_ideal}. The truncation of the quotient ring is then realized as the quotient vector space $\R[x]_t/I_t$. If two polynomials $f,g\in \R[x]_t$ are in the same equivalence class of $\R[x]_t/I_t$, we write $f\equiv g$, and denote the equivalence class by $[f]$. Note that equivalence classes of $\R[x]_t/I_t$ may be different from those of $\R[x]/I$.

Recall that, the set of monomials $\{x^{\alpha}\}_{\alpha\in \N^n_{t}}$ is a basis of $\R[x]_{t}$ and indexes the pseudo moment matrix $M_t(\mom)$, with $\mom\in\R^{\N^n_{2t}}$. To work over the truncated quotient $\R[x]_t/I_t$, we need to describe a basis for it. We have from \cite[Chapter 5.3]{cox_ideals_2015} that the following set gives a basis for the untruncated quotient ring $\R[x]/I$,
\[
\mathcal{B} = \{x^\alpha \mid x^\alpha \notin \LT(I)\},
\]
see also \cite[Chapter 2.3]{laurent_survey_2009}. Note that, this basis depends on the choice of monomial ordering, and recall that we work with the graded lexicographical ordering on $\R[x]$. We truncate
\[
\mathcal{B}_t = \mathcal{B} \cap \R[x]_t.
\]
\begin{lemma}
\label{lemma:bt}
We have that $\mathcal{B}_t$ is a basis of $\R[x]_t/I_t$, i.e., $\Span_{\R}\big(\mathcal{B}_t\big) = \R[x]_t/I_t$ and $\mathcal{B}_t$ is linearly independent.
\end{lemma}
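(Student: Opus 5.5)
We need to show that the classes $[x^\alpha]$, $x^\alpha \in \mathcal{B}_t$, form a basis of $\R[x]_t/I_t$. The plan is to use multivariate division by the Gröbner basis $\p$ with respect to $<_{\grlex}$, exploiting that a graded order never raises degree, together with the characterization $I_t = I\cap\R[x]_t$. Spanning and linear independence are handled separately.

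\emph{Spanning.} Fix $f \in \R[x]_t$. We apply the division algorithm of \cite[Chapter 2.3]{cox_ideals_2015} to $f$ by $p_1,\dots,p_s$. This yields
\[
f = \sum_{l=1}^s k_l p_l + r,
\]
where no monomial of $r$ is divisible by any $\LT(p_l)$, and $\LT(k_lp_l) \le_{\grlex} \LT(f)$ for every $l$. Since $<_{\grlex}$ is graded, each $k_l p_l$ has degree at most $\deg f \le t$. Hence $r \in \R[x]_t$ and $f - r \in I \cap \R[x]_t = I_t$. Because $\p$ is a Gröbner basis, $\LT(I)$ is generated by the $\LT(p_l)$. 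So a monomial outside $\langle \LT(p_1),\dots,\LT(p_s)\rangle$ is not in $\LT(I)$, and every monomial of $r$ therefore lies in $\mathcal{B}$. Since $\deg r\le t$, these monomials lie in $\mathcal{B}_t$. Thus $[f]=[r]\in\Span_\R\{[x^\alpha] : x^\alpha\in\mathcal{B}_t\}$.

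\emph{Linear independence.} Suppose $r=\sum_{x^\alpha\in\mathcal{B}_t} c_\alpha x^\alpha$ satisfies $[r]=0$ in $\R[x]_t/I_t$, that is, $r\in I_t\subset I$. If $r\neq 0$, then $\LT(r)\in\LT(I)$. But $\LT(r)$ is, up to a scalar, one of the monomials $x^\alpha\in\mathcal{B}$, and these satisfy $x^\alpha\notin\LT(I)$. This is a contradiction, so all $c_\alpha=0$. Here we read $\LT(I)$ as the monomial ideal generated by the leading monomials, which is the standard convention and the one used in the definition of $\mathcal{B}$.

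The main obstacle is the degree control in the spanning step: the remainder and the quotient terms must all stay in degree $\le t$ so that $f-r$ lies in $I_t$ and not merely in $I$. This is exactly where the choice of a graded order is essential, since with a lex order the division could produce higher-degree quotients. It is also where Lemma \ref{le:truncated_ideal} can alternatively be invoked: it gives $f-r=\sum k_lp_l$ with $\deg(k_lp_l)\le t$ directly from $f - r \in I\cap \R[x]_t$. Independence uses the Gröbner property only through the definition of $\mathcal{B}$.
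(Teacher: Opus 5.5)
Your proof is correct and follows the same route as the paper's. Spanning comes from dividing by a Gröbner basis under the graded order, so the remainder has degree at most $t$ and $f-r\in I\cap\R[x]_t=I_t$. Independence comes from the leading term of a nonzero element of $I$. The one difference is that you divide by $\p$ and use its Gröbner property, whereas the paper divides by an arbitrary Gröbner basis $G$ of $I$; the lemma does not require $\p$ itself to be Gröbner, and your argument works verbatim with $G$ in place of $\p$. The appeal to Lemma \ref{le:truncated_ideal} is unnecessary, since $I_t$ is defined as $I\cap\R[x]_t$ and only $\deg r\le t$ is needed.
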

\begin{proof}
First, we show that $\mathcal{B}_t$ spans $\R[x]_t / I_t$. Let $f \in \mathbb{R}[x]_t$, with equivalence class $[f] \in \R[x]_t / I_t$. Let $G = \{k_1,\dots, k_g\}$ be a Gröbner basis of $I$, with respect to the graded lexicographical ordering. By multivariate division with respect to $G$, we can write
\[
f = \sum_{l=1}^g \phi_l k_l + \psi,
\]
where $\psi$ is the remainder, whose monomials lie in $\mathcal{B}$.
Since the monomial order is degree-compatible, $\deg(\psi) \le t$, hence $\psi \in \Span_{\R}(\mathcal{B}_t)$.
Moreover, $f - \psi \in I$ and $\deg(f-\psi) \le t$, so $f - \psi \in I_t$. Thus $f \equiv \psi$ and $[f] \in \Span_{\R}(\mathcal{B}_t)$.

Next, we show linear independence. Suppose that
\[
f = \sum_{x^\alpha \in \mathcal{B}_t} c_\alpha x^\alpha \in I_t.
\]
Then $f \in I$. If $f \neq 0$, its leading term $\LT(f)$ must lie in $\LT(I)$. However, all monomials of $f$ belong to $\mathcal{B}$, hence none lies in $\LT(I)$, a contradiction. Therefore $f = 0$, and all coefficients $c_\alpha$ vanish. This proves linear independence. 
Hence $\mathcal{B}_t$ is a basis of $\mathbb{R}[x]_t / I_t$.
\end{proof}
We give an example of such a basis below, in Remark \ref{rk:spheres_basis}.

When working with pseudo moment matrices, we used $|\N^n_t|$ and $|\N^n_{2t}|$. Similarly, we will use $\mathcal{B}_t = \{q_1,\dots,q_{r_{t}}\}$ and $\mathcal{B}_{2t} = \{q_1,\dots,q_{r_{2t}}\}$. While $\mathcal{B}_t$ is unique up to the choice of monomial ordering, $r_{t} = |\mathcal{B}_t|$ is unique for any $t$.  Note that, we have that $r_t\leq |\N^n_t|$. 

For $k$ in $\R[x]_{2t}$, the above allows us to write $k \equiv \sum_{i=1}^{r_{2t}} c_i^{(k)} q_i$, for some real coefficients $c_i^{(k)}$. Set $c^{(k)} = (c_i^{(k)})$, as the vector of coefficients of the residual of $k$. We fix the notation 
\begin{equation}\label{eq:c_ialpha}
x^\alpha \equiv \sum_{i=1}^{r_{2t}} c_{i,\alpha} q_i.
\end{equation} 

The basis construction above allows us to define the reduced version of the objects discussed in the introduction, which we will need to formulate the reduced semidefinite programs. We follow a similar construction as the one in \cite{laurent2007varieties}, which deals with an untruncated finite dimensional quotient ring (when $V_{\R}(I)$ is zero dimensional). We define the \emph{reduced Riesz functional} of $\hat{\mom}\in \R^{\mathcal{B}_{2t}}$ as 
\[
\hat{L}_{\hat{\mom}}(k) = \sum_{i=1}^{r_{2t}} c_i^{(k)} \hat{\mom}_i,\;\; k\in \R[x]_{2t}/I_{2t}.
\]
This allows us to define the \emph{reduced pseudo moment matrix} as the $r_t\times r_t$ matrix with the following entries,
\begin{equation}\label{eq:reduced_mom_matrix}
\hat{M}_t(\hat{\mom})_{ij} = \hat{L}_{\hat{\mom}}(q_i q_j) = \sum_{e=1}^{r_{2t}} c_e^{(q_i q_j)} \hat{\mom}_e.
\end{equation}
Furthermore, given a polynomial $g \equiv \sum_l c^{(g)}_l q_l \in \R[x]_{2t}/I_{2t}$, set $t_g = \lceil\frac{\deg(g)}{2}\rceil$. We define the \emph{reduced shifted sequence}
\begin{equation}\label{eq:shifted_seq_reduced}
(g \hat{\mom})_i = \hat{L}_{\hat{\mom}}(g q_i) = \sum_l c^{(g)}_l \hat{L}_{\hat{\mom}}(q_i q_l) ,\;\; \text{with}\; i=1,\dots,r_{2(t-t_g)}.
\end{equation}
This in turn allows us to define the \emph{reduced localizing matrix} associated to $g$ and $\hat{z}$ as the reduced pseudo moment matrix of the sequence $g \hat{\mom}$, and is written as $\hat{M}_{(t-t_g)}(g\hat{\mom})$. Thus, 
\[
\hat{M}_{(t-t_g)}(g\hat{\mom})_{ij} = \big(\hat{L}_{\hat{\mom}}(g q_i q_j)\big)= \sum_{l} c_l^{(g)} \hat{L}_{\hat{\mom}}(q_i q_j q_l) ,\;\; \text{with}\; i,j=1,\dots,r_{2(t-t_g)}.
\]

Let $\Sigma[x]'_{2t}$ be the set of sums of squares of polynomials supported on $\mathcal{B}_t$. 
Similarly with $g$ as above, let $\Sigma[x]'_{2(t-t_{g})}$ be the set of sums of squares of polynomials supported on $\mathcal{B}_{t-t_{g}}$.

With these notions, we write the reduced versions of \eqref{eq:gmp_relax} and 
\eqref{eq:dual_relax} as
\begin{equation}\label{eq:reduced_primal}
\begin{aligned}
\primal_t^{\red} = &\inf_{\hat{\mom}}  \; \; \hat{L}_{\hat{\mom}}(f) \\
& \st \;  \hat{L}_{\hat{\mom}}(h_i) = b_i, \; i=1,\dots,m\\
& \qquad \hat{M}_t(\hat{\mom})\succeq 0, \;\hat{M}_{t-t_{g_j}}(g_j\hat{\mom}) \succeq 0,\\
& \qquad j = 1,\dots,\ell,\;\; \hat{\mom}\in \R^{\mathcal{B}_{2t}},
\end{aligned}
\end{equation}
and
\begin{equation}\label{eq:reduced_dual}
\begin{aligned}
\dual_t^{\red} = &\sup_{\lambda,\{\sigma_j\}}  \; \; \sum_{i=1}^m b_i \lambda_i \\
& \st \;  f - \sum_{i=1}^m  h_i\lambda_i \equiv \sigma_0 + \sum_{j=1}^\ell \sigma_j g_j \\
& \qquad  \sigma_0 \in \Sigma[x]'_{2t}, \;\; \sigma_j \in \Sigma[x]'_{2(t-t_{g_j})},\\
& \qquad j = 1,\dots,\ell,\;\;\lambda \in \R^m,
\end{aligned}
\end{equation}
respectively. 

We now draw a connection between the unreduced and reduced moment relaxations. Given a sequence $\hat{\mom}\in \R^{\mathcal{B}_{2t}}$, we extend it to a sequence $\mom \in \R^{\N^n_{2t}}$ by using \eqref{eq:c_ialpha}. For $\alpha\in \N^n_{2t}$, we set
\begin{equation}\label{eq:mom_extension}
\mom_{\alpha} := \sum_{i=1}^{r_{2t}} c_{i,\alpha} \,\hat{\mom}_i.
\end{equation}
Let $U_{2t}$ denote the $r_{2t} \times |\N^n_{2t}|$ matrix whose $(i,\alpha)$ entry is equal to $c_{i,\alpha}$. Hence, we have $\mom = U_{2t}^T \hat{\mom} \in \R^{\N^n_{2t}}$. For a polynomial $k= \sum_{\alpha\in \N_{2t}^n}k_{\alpha}x^{\alpha}$ in $\R[x]_{2t}$, we can write
\begin{equation}\label{eq:U_polynomial}
c^{(k)} = U_{2t} k,
\end{equation}
where the symbol $k$ is also used to refer to the vector of coefficients $(k_\alpha)_{\alpha\in \N_{2t}^n}$. This uses the uniqueness property of $\mathcal{B}_{2t}$, as follows,
\begin{equation}
k = \sum_\alpha k_\alpha x^\alpha \equiv \sum_\alpha k_\alpha \sum_{i=1}^{r_{2t}} c_{i,\alpha} q_i =\sum_{i=1}^{r_{2t}} \Big(  \sum_\alpha k_\alpha c_{i,\alpha} \Big) q_i \equiv \sum_{i=1}^{r_{2t}} c_i^{(k)} q_i.
\end{equation}
Note that $U_{2t}$ has full row rank. We similarly let $U_t$ be the $r_{t} \times |\N^n_{t}|$ submatrix of $U_{2t}$ given by the first $r_t$ rows and $|\N^n_{t}|$ columns. 

The following lemma resolves a potential conflict between the shifted sequence of an extension and the extension of a reduced shift sequence.

\begin{lemma}\label{le:reduced_shifted_seqs}
Let $\hat{\mom}$ be a sequence in $\R^{\mathcal{B}_{2t}}$, and $\mom$ be a sequence in $\R^{\N^n_{2t}}$, with $\mom = U_{2t}^T \hat{\mom}$, as in \eqref{eq:mom_extension}. Further let $g$ be a polynomial in $\R[x]_{2t}$, with $t_g = \lceil\frac{\deg(g)}{2}\rceil$.
\begin{enumerate}[(i)]
\item $\hat{L}_{\hat{\mom}}(g) = L_{\mom}(g)$.
\item The shifted sequence $g\mom$, as in \eqref{eq:shifted_seq}, is equal to the extension of the reduced shifted sequence $g\hat{\mom}$, as in  \eqref{eq:shifted_seq_reduced}, i.e.,
\[
(g\mom)_\alpha = \big( U^T_{2(t-t_g)} \, g\hat{\mom} \big)_\alpha,\;\;\text{for all}\;\; \alpha \in \N^n_{2(t-t_g)}.
\]
\end{enumerate}
\end{lemma}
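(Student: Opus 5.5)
Both items reduce to one identity: the extended functional $L_{\mom}$ is the reduced functional $\hat{L}_{\hat{\mom}}$ composed with the reduction map $k\mapsto c^{(k)}$. The identity follows directly from \eqref{eq:mom_extension} and \eqref{eq:U_polynomial}; item (ii) is then this identity applied to the products $gx^\alpha$.

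\textbf{Item (i).} Let $g=\sum_{\alpha\in\N^n_{2t}} g_\alpha x^\alpha$. By definition of $L_{\mom}$ and \eqref{eq:mom_extension},
\[
L_{\mom}(g)=\sum_{\alpha}g_\alpha \mom_\alpha=\sum_\alpha g_\alpha\sum_{i=1}^{r_{2t}}c_{i,\alpha}\hat{\mom}_i=\sum_{i=1}^{r_{2t}}\Big(\sum_\alpha c_{i,\alpha}g_\alpha\Big)\hat{\mom}_i .
\]
By \eqref{eq:U_polynomial}, the inner sum is $(U_{2t}g)_i=c^{(g)}_i$, so the right-hand side is $\hat{L}_{\hat{\mom}}(g)$. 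In matrix form this is just $g^T U_{2t}^T\hat{\mom}=(U_{2t}g)^T\hat{\mom}$. The coefficient vector $c^{(g)}$ is unambiguous because $\mathcal{B}_{2t}$ is a basis of $\R[x]_{2t}/I_{2t}$ (Lemma \ref{lemma:bt}). Consequently $\hat{L}_{\hat{\mom}}$ is well defined on classes, i.e.\ it is constant on $g+I_{2t}$.

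\textbf{Item (ii).} Fix $\alpha\in\N^n_{2(t-t_g)}$. Then $gx^\alpha\in\R[x]_{2t}$, so by \eqref{eq:shifted_seq} and item (i),
\[
(g\mom)_\alpha=L_{\mom}(gx^\alpha)=\hat{L}_{\hat{\mom}}(gx^\alpha).
\]
It remains to show that the right-hand side equals $\big(U^T_{2(t-t_g)}\,g\hat{\mom}\big)_\alpha=\sum_{i=1}^{r_{2(t-t_g)}}c_{i,\alpha}\,(g\hat{\mom})_i$. Write $x^\alpha\equiv\sum_i c_{i,\alpha}q_i$ modulo $I_{2(t-t_g)}$, with $q_i\in\mathcal{B}_{2(t-t_g)}$. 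Multiply this congruence by $g$. Since $x^\alpha-\sum_i c_{i,\alpha}q_i\in I$ has degree at most $2(t-t_g)$, multiplying by $g$ gives an element of $I$ of degree at most $2t$, hence of $I_{2t}$. Therefore $gx^\alpha\equiv\sum_i c_{i,\alpha}\, gq_i$ in $\R[x]_{2t}/I_{2t}$. By item (i) and linearity,
\[
\hat{L}_{\hat{\mom}}(gx^\alpha)=\sum_i c_{i,\alpha}\hat{L}_{\hat{\mom}}(gq_i)=\sum_i c_{i,\alpha}(g\hat{\mom})_i,
\]
which is the claim. Here I use the first expression in \eqref{eq:shifted_seq_reduced}, $(g\hat{\mom})_i=\hat{L}_{\hat{\mom}}(gq_i)$. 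Its second expression, via $g\equiv\sum_l c^{(g)}_lq_l$, agrees with the first by the same well-definedness of $\hat{L}_{\hat{\mom}}$ on classes modulo $I_{2t}$.

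\textbf{Main obstacle.} The delicate step is the degree bookkeeping in (ii). The coefficients $c_{i,\alpha}$ must be taken with respect to the truncation $\mathcal{B}_{2(t-t_g)}$, and then multiplying by $g$ must not leave $I_{2t}$. Both hold because the order is graded. First, division by the Gr\"obner basis does not raise degree, so for $|\alpha|\le 2(t-t_g)$ the coefficients $c_{i,\alpha}$ are supported on $\mathcal{B}_{2(t-t_g)}$; in particular they coincide with the corresponding entries of $U_{2t}$, so $U_{2(t-t_g)}$ is the matching submatrix. Second, $\deg g\le 2t_g$, so the product stays in degree at most $2t$.
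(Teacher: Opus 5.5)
Your proposal is correct and follows the paper's proof essentially line by line. In (i) it is the identity $g^T U_{2t}^T\hat{\mom} = (c^{(g)})^T\hat{\mom}$, and in (ii) it applies (i) to $gx^\alpha$, using linearity of $\hat{L}_{\hat{\mom}}$ together with $x^\alpha \equiv \sum_i c_{i,\alpha} q_i$. The one difference is that you spell out the degree bookkeeping that the paper leaves implicit in the step $\hat{L}_{\hat{\mom}}\big((\sum_i c_{i,\alpha}q_i)g\big) = \hat{L}_{\hat{\mom}}(gx^\alpha)$: namely that $g\,(x^\alpha - \sum_i c_{i,\alpha}q_i) \in I_{2t}$, and that for $|\alpha|\le 2(t-t_g)$ the coefficients $c_{i,\alpha}$ are supported on $\mathcal{B}_{2(t-t_g)}$.
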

\begin{proof}
\begin{enumerate}[(i)]
    \item Using \eqref{eq:U_polynomial}, we write,
\[
L_{\mom}(g) = g^T z = g^T (U^T_{2t} \hat{\mom}) = (c^{(g)})^T \hat{\mom} = \hat{L}_{\hat{\mom}}(g).
\]
\item For $\alpha \in \N^n_{2(t-t_g)}$, we have $(g\mom)_\alpha = L_\mom(g x^\alpha)$. On the other hand, we write
\[
\big( U^T_{2(t-t_g)} \, g\hat{\mom} \big)_\alpha =  \sum_{i=1}^{r_{2(t-t_g)}} c_{i,\alpha} \hat{L}_{\hat{\mom}}(q_i g) = \hat{L}_{\hat{\mom}}\Big( \big(\sum_{i=1}^{r_{2(t-t_g)}} c_{i,\alpha} q_i \big) g \Big) = \hat{L}_{\hat{\mom}}(g x^\alpha),
\]
and the claim follows by the first item. \qedhere
\end{enumerate}
\end{proof}

The next lemma shows a relation between the reduced and unreduced pseudo moment and localizing matrices of $\hat{\mom}$ and $\mom$, respectively.

\begin{lemma}\label{le:moment_matrices}
Consider the assumptions of Lemma \ref{le:reduced_shifted_seqs}.
\begin{enumerate}[(i)]
    \item We have that $M_t(\mom) = U_t^T\hat{M}_t(\hat{\mom})U_t$, and \[\hat{M}_t(\hat{\mom})\succeq 0\;\; \text{if and only if}\;\; M_t(\mom)\succeq 0.\]
    \item Similarly, we have that $M_{t-t_g}(g\mom) = U_{t-t_g}^T\hat{M}_{t-t_g}(g\hat{\mom})U_{t-t_g}$, and \[\hat{M}_{t-t_g}(g\hat{\mom})\succeq 0\;\; \text{if and only if}\;\; M_{t-t_g}(g\mom)\succeq 0.\]
\end{enumerate}
\end{lemma}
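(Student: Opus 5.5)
The plan is to prove the factorization identities entrywise and then deduce the equivalence of positive semidefiniteness from the fact that $U_t$ has full row rank.

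\textbf{Step 1: the identity in (i).} Fix $\alpha,\beta\in\N^n_t$. By definition $M_t(\mom)_{\alpha\beta}=L_\mom(x^{\alpha+\beta})$, and Lemma~\ref{le:reduced_shifted_seqs}(i) gives $L_\mom(x^{\alpha+\beta})=\hat L_{\hat\mom}(x^\alpha x^\beta)$. Since $\deg x^\alpha\le t$, the division argument of Lemma~\ref{lemma:bt} expresses $x^\alpha\equiv\sum_{i\le r_t}c_{i,\alpha}q_i$ modulo $I_t$, with the remainder supported on $\mathcal{B}_t$. Hence
\[
x^\alpha x^\beta \equiv \sum_{i,j} c_{i,\alpha}c_{j,\beta}\, q_iq_j \pmod{I_{2t}},
\]
because a product of an element of $I_t$ with a polynomial of degree at most $t$ lies in $I_{2t}$. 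The functional $\hat L_{\hat\mom}$ is well defined on $\R[x]_{2t}/I_{2t}$ and linear, so
\[
M_t(\mom)_{\alpha\beta}=\sum_{i,j}c_{i,\alpha}\,\hat M_t(\hat\mom)_{ij}\,c_{j,\beta}=\bigl(U_t^T\hat M_t(\hat\mom)U_t\bigr)_{\alpha\beta}.
\]

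One point needs checking here: that $U_t$, defined as the top-left block of $U_{2t}$, really holds the coefficients $c_{i,\alpha}$ for $|\alpha|\le t$, and that $c_{i,\alpha}=0$ for $i>r_t$. This holds if $\mathcal{B}_{2t}$ is ordered with $\mathcal{B}_t$ first, for instance by degree. The remainder of $x^\alpha$ under grlex division by a graded Gröbner basis has degree at most $|\alpha|$, so it lies in $\Span\mathcal{B}_t$. Reduction in $\R[x]_{2t}/I_{2t}$ agrees with reduction in $\R[x]_t/I_t$ on $\R[x]_t$, since $I_{2t}\cap\R[x]_t=I_t$.

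\textbf{Step 2: the equivalence in (i).} If $\hat M_t(\hat\mom)\succeq0$, the congruence identity gives $M_t(\mom)\succeq0$ directly. Conversely, $U_t$ has full row rank; every basis element $q_i\in\mathcal{B}_t$ is itself a monomial $x^{\alpha_i}$ with $c_{j,\alpha_i}=\delta_{ij}$, so $U_t$ contains an identity submatrix. Thus for any $v\in\R^{r_t}$ there is $w$ with $U_tw=v$, and then
\[
v^T\hat M_t(\hat\mom)v=w^TM_t(\mom)w\ge0.
\]
Concretely, $\hat M_t(\hat\mom)$ is a principal submatrix of $M_t(\mom)$.

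\textbf{Step 3: part (ii).} The argument is the same with the degree bound $t-t_g$ in place of $t$. Entrywise,
\[
M_{t-t_g}(g\mom)_{\alpha\beta}=L_\mom(gx^{\alpha+\beta})=\hat L_{\hat\mom}(gx^\alpha x^\beta).
\]
Replace $x^\alpha$ and $x^\beta$ by their reductions modulo $I_{t-t_g}$. The error term then has the form $g\cdot(\text{element of } I)$ of degree at most $2t$, so it lies in $I_{2t}$ and is killed by $\hat L_{\hat\mom}$. This yields $U_{t-t_g}^T\hat M_{t-t_g}(g\hat\mom)U_{t-t_g}$, and Lemma~\ref{le:reduced_shifted_seqs}(ii) provides the consistency of the shifted sequences. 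The PSD equivalence then follows exactly as in Step 2.

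The main obstacle is the bookkeeping in Step 1: ensuring that the truncated coefficients $c_{i,\alpha}$ and the block structure of $U_t$ are consistent across the degrees $t$, $t-t_g$ and $2t$. This relies essentially on the graded Gröbner property through Lemma~\ref{le:truncated_ideal}.
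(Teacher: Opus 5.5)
Your proposal is correct and follows the same route as the paper. Like the paper, you verify $M_t(\mom)=U_t^T\hat M_t(\hat\mom)U_t$ and its localizing analogue entry by entry, then derive the positive semidefiniteness equivalence from the full row rank of $U_t$; your use of the linearity of $\hat L_{\hat\mom}$ on $\R[x]_{2t}/I_{2t}$ together with Lemma~\ref{le:reduced_shifted_seqs}(i) is a compact form of the paper's uniqueness-of-coefficients computation. Your explicit checks (that $c_{i,\alpha}=0$ for $i>r_t$ when $|\alpha|\le t$, and that $\hat M_t(\hat\mom)$ is a principal submatrix of $M_t(\mom)$) fill in details the paper leaves implicit, but the degree control you need comes from grlex division as in Lemma~\ref{lemma:bt}, not from Lemma~\ref{le:truncated_ideal}, which concerns the generators $\p$ and plays no role here.
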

\begin{proof}
\begin{enumerate}[(i)]
\item The following computation is adapted from \cite[Lemma 13]{laurent2007varieties}. Fix $\alpha,\beta \in \N^n_t$. We have
\[
\begin{aligned}
\Big(U_t^T\hat{M}_t(\hat{\mom})U_t\Big)_{(\alpha,\beta)} =& \sum_{i,j=1}^{r_t} U_{t\,(i,\alpha)} \hat{M}_t(\hat{\mom})_{ij} U_{t\, (j, \beta)} \\
=& \sum_{i,j=1}^{r_t} c_{i,\alpha} \Big(\sum_{e=1}^{r_{2t}} c_e^{(q_i q_j)} \hat{\mom}_e\Big) c_{j, \beta}\\
=& \sum_{e=1}^{r_{2t}} \Big(\sum_{i,j=1}^{r_t} c_{i,\alpha} c_{j, \beta} c_e^{(q_i q_j)}\Big) \hat{\mom}_e.
\end{aligned}
\]
From \eqref{eq:c_ialpha}, we directly have that
\[
\begin{aligned}
x^{\alpha}x^{\beta} \equiv& \Big(\sum_{i=1}^{r_t} c_{i,\alpha}\, q_i\Big)\Big(\sum_{j=1}^{r_t} c_{j,\beta}\, q_j\Big)\\
=& \sum_{i,j=1}^{r_t} c_{i,\alpha} \,c_{j,\beta}\, q_i\, q_j\\
\equiv& \sum_{e=1}^{r_{2t}} \Big(\sum_{i,j=1}^{r_t} c_{i,\alpha} \, c_{j,\beta}\, c^{(q_i q_j)}_e\Big) q_e,
\end{aligned}
\]
which is equivalent to
\[
x^{\alpha+\beta} \equiv \sum_{i=1}^{r_{2t}} c_{i,\alpha+\beta} q_i.
\]
Together, on the moment side, the above gives
\[
M_t(\mom)_{(\alpha,\beta)} = \mom_{\alpha+\beta} =  \sum_{i=1}^{r_{2t}} c_{i,\alpha+\beta} \hat{\mom}_i = \sum_{e=1}^{r_{2t}} \Big(\sum_{i,j=1}^{r_t} c_{i,\alpha} \, c_{j,\beta}\, c^{(q_i q_j)}_e\Big) \hat{\mom}_e,
\]
where we use the uniqueness property of the basis $\mathcal{B}_{2t}$. Thus, we have shown that $M_t(\mom) = U_t^T\hat{M}_t(\hat{\mom})U_t$, and the claim follows using the fact that $U_t$ has full row rank.

\item The proof follows by a similar computation. With $g=\sum_{\gamma\in\N^n_{2t_g}} g_{\gamma}x^{\gamma} \equiv \sum_{l=1}^{r_{2t_g}} c^{(g)}_l q_l $, fix $\alpha,\beta \in \N^n_{t-t_g}$. We have
\[
\begin{aligned}
\Big(U_{t-t_g}^T\hat{M}_{t-t_g}(\hat{\mom})U_{t-t_g}\Big)_{(\alpha,\beta)} =& \sum_{i,j=1}^{r_{t-t_g}} U_{t-t_g\,(i,\alpha)} \hat{M}_{t-t_g}(\hat{\mom})_{ij} U_{t-t_g\, (j, \beta)} \\
=& \sum_{i,j=1}^{r_{t-t_g}} c_{i,\alpha} \Big(\sum_{l=1}^{r_{2t_g}} c_l^{(g)} \hat{L}_{\hat{\mom}}(q_i q_j q_l) \Big) c_{j, \beta}\\
=& \sum_{i,j=1}^{r_{t-t_g}} c_{i,\alpha} \,c_{j, \beta} \Big(\sum_{l=1}^{r_{2t_g}} c_l^{(g)} \big( \sum_{e=1}^{r_{2t}} c^{(q_i q_j q_l)}_e \hat{\mom}_e \big) \Big)\\
=& \sum_{i,j=1}^{r_{t-t_g}} \sum_{l=1}^{r_{2t_g}} \sum_{e=1}^{r_{2t}}  \sum_{\gamma\in\N^n_{2t_g}} c_{i,\alpha}\, c_{j, \beta} \,c_{l,\gamma} \,g_{\gamma} \,c^{(q_i q_j q_l)}_e \hat{\mom}_e,
\end{aligned}
\]
where the last equality holds by \eqref{eq:U_polynomial}. For $\gamma\in\N^n_{2t_g}$, we have directly from \eqref{eq:c_ialpha} that
\[
\begin{aligned}
x^{\alpha}x^{\beta}x^{\gamma} \equiv& \Big(\sum_{i=1}^{r_{t-t_g}} c_{i,\alpha}\, q_i\Big)\Big(\sum_{j=1}^{r_{t-t_g}} c_{j,\beta}\, q_j\Big)\Big(\sum_{l=1}^{r_{2t_g}} c_{l,\gamma}\, q_l\Big)\\
=& \sum_{i,j=1}^{r_{t-t_g}}\sum_{l=1}^{r_{2t_g}} c_{i,\alpha} \,c_{j,\beta}\, c_{l,\gamma} q_i\, q_j\, q_l\\
\equiv& \sum_{e=1}^{r_{2(t-t_g)}} \Big(\sum_{i,j=1}^{r_{t-t_g}} \sum_{l=1}^{r_{2t_g}} c_{i,\alpha} \, c_{j,\beta}\,c_{l,\gamma} \,c^{(q_i q_j q_l)}_e\Big) q_e,
\end{aligned}
\]
which is equivalent to
\[
x^{\alpha+\beta+\gamma} \equiv \sum_{d=1}^{r_{2t}} c_{d,\alpha+\beta+\gamma}\, q_d.
\]
Together, on the moment side, the above gives
\[
\begin{aligned}
M_{t-t_g}(\mom)_{(\alpha,\beta)} =& \sum_{\gamma\in\N^n_{2t_g}} g_\gamma \mom_{\alpha+\beta+\gamma} \\
=& \sum_{\gamma\in\N^n_{2t_g}} g_\gamma \Big( \sum_{d=1}^{r_{2(t-t_g)}} c_{d,\alpha+\beta+\gamma} \hat{\mom}_d \Big)\\ 
=& \sum_{\gamma\in\N^n_{2t_g}} g_\gamma \Big(\sum_{e=1}^{r_{2t}} \Big(\sum_{i,j=1}^{r_{t-t_g}} \sum_{l=1}^{r_{2t_g}} c_{i,\alpha} \, c_{j,\beta}\,c_{l,\gamma} \,c^{(q_i q_j q_l)}_e\Big) \hat{\mom}_e \Big).
\end{aligned}
\]
Thus, we have similarly shown that $M_{t-t_g}(\mom) = U_{t-t_g}^T\hat{M}_{t-t_g}(\hat{\mom})U_{t-t_g}$ and the claim follows since $U_{t-t_g}$ has full row rank. \qedhere
\end{enumerate}
\end{proof}

The following statement shows the relation between the reduced and unreduced semidefinite programs.

\begin{proposition}\label{pr:reduced_programs}
Suppose the given generating set $p_1\dots,p_s$ of $I$ forms a Gröbner basis with respect to the graded lexicographic order. 
With $\primal_t$ as in \eqref{eq:gmp_relax} and $\primal_t^{\red}$ as in  \eqref{eq:reduced_primal}, we have that $\primal_t \leq \primal_t^{\red}$. 
With $\dual_t$ as in \eqref{eq:dual_relax} and $\dual_t^{\red}$ as in  \eqref{eq:reduced_dual}, we have that $\dual_t^{\red} = \dual_t$.
\end{proposition}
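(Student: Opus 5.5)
For the primal inequality $\primal_t \leq \primal_t^{\red}$, I will show that every feasible point $\hat{\mom}$ of \eqref{eq:reduced_primal} extends to a feasible point of \eqref{eq:gmp_relax} with the same objective value. The extension is $\mom = U_{2t}^T\hat{\mom}$ as in \eqref{eq:mom_extension}, and the verification goes constraint by constraint.

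\textbf{Objective and linear constraints.} By Lemma \ref{le:reduced_shifted_seqs}(i), $L_\mom(f)=\hat{L}_{\hat{\mom}}(f)$ and $L_\mom(h_i)=\hat{L}_{\hat{\mom}}(h_i)=b_i$.

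\textbf{Semidefinite constraints.} By Lemma \ref{le:moment_matrices}, $M_t(\mom)=U_t^T\hat{M}_t(\hat{\mom})U_t\succeq 0$ and $M_{t-t_{g_j}}(g_j\mom)\succeq 0$.

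\textbf{Ideal constraints.} It remains to check $M_{t-t_{p_l}}(p_l\mom)=0$. Its $(\alpha,\beta)$ entry is $L_\mom(p_l x^{\alpha+\beta})$, and $\deg(p_lx^{\alpha+\beta})\le 2t$. Hence $p_lx^{\alpha+\beta}\in I_{2t}$, whose class in $\R[x]_{2t}/I_{2t}$ is zero. So $c^{(p_lx^{\alpha+\beta})}=0$, and by \eqref{eq:U_polynomial} together with Lemma \ref{le:reduced_shifted_seqs}(i) the entry equals $(c^{(p_lx^{\alpha+\beta})})^T\hat{\mom}=0$.

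Taking the infimum over feasible $\hat{\mom}$ then gives $\primal_t\le\primal_t^{\red}$.

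For the dual equality $\dual_t^{\red}=\dual_t$, I will show that the two programs have the same feasible sets of $\lambda$.

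\textbf{Direction $\dual_t\le\dual_t^{\red}$.} Take $\lambda$ feasible for \eqref{eq:dual_relax}, so $f-\sum_i\lambda_ih_i=\sigma_0+\sum_j\sigma_jg_j+q$ with $q\in\langle\p\rangle_{2t}\subseteq I_{2t}$. I will repeat the decomposition used in the proof of Proposition \ref{prop:moduleclosed}(i). Write each square root $\phi$ of $\sigma_0$ (degree $\le t$) as $\phi=\phi'+y$ with $\phi'\in\Span_\R(\mathcal{B}_t)$ and $y\in I_t$; this is possible by Lemma \ref{lemma:bt} and the division argument in its proof. Then $\phi^2=\phi'^2+(2\phi'y+y^2)$, and the correction term has degree $\le 2t$ and lies in $I$, hence in $I_{2t}$. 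Doing the same for the $\sigma_j$ in degree $t-t_{g_j}$ gives correction terms $(2\phi'y+y^2)g_j$ of degree $\le 2t$ lying in $I_{2t}$. Therefore $f-\sum_i\lambda_ih_i\equiv\sigma_0'+\sum_j\sigma_j'g_j$ modulo $I_{2t}$, with $\sigma_0'\in\Sigma[x]'_{2t}$ and $\sigma_j'\in\Sigma[x]'_{2(t-t_{g_j})}$, so $\lambda$ is feasible for \eqref{eq:reduced_dual}.

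\textbf{Direction $\dual_t^{\red}\le\dual_t$.} If $f-\sum\lambda_ih_i\equiv\sigma_0+\sum_j\sigma_jg_j$ with reduced SOS multipliers, the difference is a polynomial of degree $\le 2t$ lying in $I$, i.e.\ in $I_{2t}$. By Lemma \ref{le:truncated_ideal}, $I_{2t}=\langle\p\rangle_{2t}$, and since $\Sigma[x]'\subseteq\Sigma[x]$ the same $\lambda$ is feasible for \eqref{eq:dual_relax}.

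Both programs share the objective $\sum b_i\lambda_i$, so the suprema coincide.

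\textbf{Main obstacle.} The only delicate point is degree bookkeeping, because $\equiv$ is taken in the truncated quotient $\R[x]_{2t}/I_{2t}$ rather than in $\R[x]/I$. I must make sure every element claimed to lie in $I_{2t}$ genuinely has degree $\le 2t$; in particular the products $y\cdot g_j$ must satisfy this, which holds since $\deg y\le t-t_{g_j}$ and $\deg g_j\le 2t_{g_j}$. The Gröbner basis hypothesis is then what converts membership in $I_{2t}$ into membership in $\langle\p\rangle_{2t}$.
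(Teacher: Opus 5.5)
Your proof is correct and follows the paper's route. On the primal side you extend $\hat{\mom}$ by $\mom = U_{2t}^T\hat{\mom}$, invoke Lemmas \ref{le:reduced_shifted_seqs} and \ref{le:moment_matrices}, and use $c^{(k)}=0$ for $k\in I_{2t}$; on the dual side you identify the feasible $\lambda$-sets via Lemma \ref{le:truncated_ideal}. Your explicit reduction $\phi=\phi'+y$ of the SOS multipliers to $\Sigma[x]'$ (as in Proposition \ref{prop:moduleclosed}(i)) also fills in the unreduced-to-reduced direction, which the paper's proof asserts without detail.
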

\begin{proof}
We show $\primal_t \leq \primal_t^{\red}$ by fixing a feasible solution $\hat{\mom}\in\R^{\mathcal{B}_{2t}}$ of \eqref{eq:reduced_primal} and showing it can be extended to a feasible solution of \eqref{eq:gmp_relax}, with the same objective value. Let $\mom\in \R^{\N_{2t}^n}$ be the extension of $\hat{\mom}$, as in \eqref{eq:mom_extension}. By Lemma \ref{le:moment_matrices}, the positive semidefinite constraints in \eqref{eq:gmp_relax} are satisfied. Furthermore, we have that $\hat{L}_{\hat{\mom}}(k) = L_{\mom}(k)$ for any polynomial $k\in \R[x]_{2t}$, by Lemma \ref{le:reduced_shifted_seqs}. This implies that the linear moment equality constraints are satisfied, and that the objective values of $\mom$ and $\hat{\mom}$, in their respective programs, are the same.

It remains to show that the constraints $M_{t-t_{p_j}}(p_j \mom) = 0$ with $j=1,\dots,s$ in \eqref{eq:gmp_relax} are satisfied. Fix $k\in I_{2t}$, and set $t_k = \lceil\frac{\deg k}{2}\rceil$. Since the residue of $k$ is zero in $\R[x]_{2t}/I_{2t}$, we have that $c^{(k)}\in \R^{r_{2t}}$ is the zero vector. For $\alpha\in \N^n_{2(t-t_k)}$, we have
\[
(k\mom)_\alpha = \sum_{\gamma\in \N^n_{t_k}} k_\gamma \mom_{\alpha+\gamma},
\]
using \eqref{eq:shifted_seq}. Now using the definition of the extension \eqref{eq:mom_extension} and the definition of the matrix $U_{2t}$, the above is equal to, 
\[
\sum_{\gamma\in \N^n_{t_k}} \sum_{i=1}^{r_{2t}} k_\gamma c_{i,\alpha + \gamma} \hat{\mom}_i =  \sum_{i=1}^{r_{2t}} \Big( \sum_{\gamma\in \N^n_{t_k}} (U_{2t})_{(i, \alpha+\gamma)} k_{\gamma} \Big) \hat{\mom}_i.
\]
Finally, using \eqref{eq:U_polynomial}, we have 
\[
\sum_{\gamma\in \N^n_{t_k}} (U_{2t})_{(i, \alpha+\gamma)} k_{\gamma} = c^{(x^\alpha k)}_i = 0,
\]
where the last equality holds since $x^\alpha k\in I_{2t}$, for all $\alpha\in \N^n_{2(t-t_k)}$. Therefore, each entry of the shifted sequence $k\mom$ is zero, and so $M_{t-t_k}(k\mom) =0$ for any $k\in I_{2t}$. Since the polynomials $p_1,\dots,p_s$ are in $I_{2t}$ by definition, the constraints in \eqref{eq:gmp_relax} hold. Hence, $\mom$ is feasible for \eqref{eq:gmp_relax} and $\primal_t \leq \primal_t^{\red}$.

For the dual programs, first note that the objective functions yielding the values $\dual_t^{\red}$ and $\dual_t$ are the same. Furthermore, we have that a feasible solution of the reduced problem gives a feasible solution to the unreduced problem, and vice versa. This is by the equality stated in Lemma \ref{le:truncated_ideal}, which is assured by the Gröbner basis assumption on the given generating set of $I$,
\[
I_{2t} = \left\{\sum_{l=1}^s k_l p_l  \mid \deg(k_l p_l)\leq 2t\right\}.
\]
Hence, we have that $\dual_t^{\red} = \dual_t$.
\end{proof}

\subsection{Proof of the case of empty interior}\label{ap:pfofthm}

We use the notions developed in the previous appendix, to complete the proof in Subsection \ref{ssec:GMP_relax_density_proof}. We repeat the statement.

\begin{theorem*}[Repetition of Theorem \ref{th:attainSOS}(i)]
Let $X$ be a compact nonempty set which satisfies Assumption \ref{hyp:ball} (i). 
Suppose the ideal $\Ip=(p_1,\dots,p_s)$ is real radical, and the polynomials $p_i$ form a Gröbner basis with respect to the graded lexicographic order. Assume further, that $h_1\equiv1$, $b_1 > 0$ and that $b = (b_1,\dots,b_m)$ is in the relative interior of $K$. Then the semidefinite program \eqref{eq:dual_relax} attains its optimal value for all $t \geq t_{\min}$.
\end{theorem*}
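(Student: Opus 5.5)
We prove attainment for the reduced dual \eqref{eq:reduced_dual} and transfer it to \eqref{eq:dual_relax} via Proposition~\ref{pr:reduced_programs}. The transfer works because $\dual_t^{\red}=\dual_t$ and both programs have the same objective and the same feasible set of $\lambda$'s, since $I_{2t}=\langle\p\rangle_{2t}$ by Lemma~\ref{le:truncated_ideal}. Attainment for the reduced pair will come from Theorem~\ref{th:strong_duality}, applied to the reduced primal--dual pair \eqref{eq:reduced_primal}--\eqref{eq:reduced_dual}. That pair is again a standard semidefinite program: \eqref{eq:reduced_primal} has linear equalities in $\hat{\mom}$ and PSD constraints on $\hat{M}_t(\hat{\mom})$ and $\hat{M}_{t-t_{g_j}}(g_j\hat{\mom})$, with no ideal equality constraints left. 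So we need two things: a strictly feasible $\hat{\mom}$, and a lower bound on $\primal_t^{\red}$.

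\emph{Boundedness.} By Proposition~\ref{pr:reduced_programs}, $\primal_t^{\red}\ge \primal_t$. We showed in the proof of Theorem~\ref{th:attainSOS1} that $\primal_t$ is finite, using \cite[Lemma 5]{tacchi2022convergence} together with $h_1\equiv1$, $b_1>0$ and the ball constraint. Hence \eqref{eq:reduced_primal} is bounded below.

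\emph{Strict feasibility.} Let $\nu$ be the measure from Corollary~\ref{cor:feasible_measure}, which is feasible for \eqref{eq:gmp_primal} because $b\in\relint(K)$. Define $\hat{\mom}_i:=\int_X q_i\,\di\nu$ for $q_i\in\mathcal{B}_{2t}$, and let $\mom$ be the full truncated moment sequence of $\nu$.
\begin{itemize}
\item Every $x^\alpha$ with $|\alpha|\le 2t$ satisfies $x^\alpha-\sum_i c_{i,\alpha}q_i\in I_{2t}$, and this difference vanishes on $X=V_\R(\Ip)$. Integrating gives $\mom=U_{2t}^T\hat{\mom}$, so $\hat{\mom}$ is the reduced sequence whose extension is $\mom$.
\item By Lemma~\ref{le:reduced_shifted_seqs}(i), $\hat L_{\hat{\mom}}(h_i)=L_{\mom}(h_i)=b_i$, so the linear constraints hold.
\item For definiteness of the moment matrix, take $\phi=\sum_{q_i\in\mathcal{B}_t}\phi_iq_i$. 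Then $\phi^T\hat{M}_t(\hat{\mom})\phi=\hat L_{\hat{\mom}}(\phi^2)=\int_X\phi^2\,\di\nu$. The first equality needs care: $\hat L_{\hat{\mom}}$ of a product of basis elements is computed through its residue, and the residue integrates to the same value against $\nu$ because it differs from the product by an element of $I_{2t}$.
\item By Corollary~\ref{cor:feasible_measure}, $\int_X\phi^2\,\di\nu=0$ forces $\phi$ to vanish on $X$. Real radicality then gives $\phi\in I\cap\R[x]_t=I_t$. Linear independence of $\mathcal{B}_t$ modulo $I_t$ (Lemma~\ref{lemma:bt}) then gives $\phi=0$. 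Hence $\hat{M}_t(\hat{\mom})\succ0$.
\item The only inequality is $g_1=R-\|x\|^2$, which is strictly positive on $X$ by Assumption~\ref{hyp:ball}(i). So $\int_X g_1\phi^2\,\di\nu=0$ again forces $\phi\equiv0$ on $X$, and the same argument gives $\hat{M}_{t-1}(g_1\hat{\mom})\succ0$.
\end{itemize}

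Applying Theorem~\ref{th:strong_duality} to the reduced pair, $\dual_t^{\red}$ is attained. A maximizer $(\lambda,\sigma_0,\sigma_1)$ satisfies $f-\sum_i\lambda_ih_i-\sigma_0-\sigma_1g_1\in I_{2t}=\langle\p\rangle_{2t}$. This makes $\lambda$ feasible for \eqref{eq:dual_relax}, with value $\dual_t^{\red}=\dual_t$. So \eqref{eq:dual_relax} attains its optimum.

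The main obstacle is the identity $\hat L_{\hat{\mom}}(q_iq_j)=\int_X q_iq_j\,\di\nu$ and its localizing analogue. Here the Gröbner and graded-order assumptions are essential: they ensure that residues of degree-$\le 2t$ products stay in $\Span(\mathcal{B}_{2t})$ and differ from the products by elements of $I_{2t}$, which vanish on $X$. Equivalently, this step is the content of Lemma~\ref{le:moment_matrices} combined with the fact that the moment sequence of a measure on $V_\R(\Ip)$ annihilates $I_{2t}$.
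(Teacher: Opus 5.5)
Your proposal is correct and follows essentially the same route as the paper. You take the moments $\hat{\mom}_i=\int_X q_i\,\di\nu$ of the measure from Corollary~\ref{cor:feasible_measure}, show via real radicality and Lemma~\ref{lemma:bt} that they give a Slater point of \eqref{eq:reduced_primal}, apply SDP strong duality to the reduced pair, and transfer the maximizer back via Proposition~\ref{pr:reduced_programs} and $I_{2t}=\langle\p\rangle_{2t}$. You are in fact slightly more explicit than the paper's proof, which leaves implicit both the lower bound $\primal_t^{\red}\ge\primal_t>-\infty$ required by Theorem~\ref{th:strong_duality} and the real-radicality step in the positive-definiteness argument.
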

\begin{proof}
From the weak duality of semidefinite programs, we have that $\dual_t\leq \primal_t$. From Proposition \ref{pr:reduced_programs}, we have that $\primal_t \leq \primal_t^{\red}$ and $\dual_t^{\red} = \dual_t$. 
We now show, using the strong duality of semidefinite programs, that $\primal_t^{\red}=\dual_t^{\red}$ and that the optimal value $\dual_t^{\red}$ is attained. 

By Corollary \ref{cor:feasible_measure}, we have the existence of a feasible measure $\nu$ on $X$. Let $\mom^{\nu}$ be the associated moment sequence, i.e., $z^\nu_\alpha = \int x^\alpha \di \nu$, with $\alpha\in \N^n_{2t}$. With $k\in \Ip$ and using \eqref{eq:c_ialpha}, we can write
\[
z^\nu_\alpha = \int x^\alpha \di \nu = \int \big(\sum_{i=1}^{r_{2t}} c_{i,\alpha} q_i + k \big) \di \nu = \sum_{i=1}^{r_{2t}} c_{i,\alpha} \underbrace{\int q_i  \di \nu}_{=: \hat{\mom}_{i}^{\nu}} ,
\]
since $k$ vanishes on $X$, where $\nu$ is supported. Hence, we have that $\mom^{\nu} = U_{2t}^T \hat{\mom}^{\nu}$, and, by the argument in the proof of Proposition \ref{pr:reduced_programs}, $\hat{\mom}^{\nu}$ is feasible for \eqref{eq:reduced_primal}. 

It remains to show that $\hat{\mom}^{\nu}$ is strictly feasible, that is, $\hat{M}_t(\hat{\mom}^{\nu}) \succ 0$ and $\hat{M}_{t-t_{g_1}}(\hat{\mom}^{\nu}) \succ 0$. To show $\hat{M}_t(\hat{\mom}^{\nu}) \succ 0$, it is enough to show $(c^{(\psi)})^T \hat{M}_t(\hat{\mom}^{\nu}) c^{(\psi)}=0$ implies $c^{(\psi)}=0$, for $\psi = \sum_i c^{(\psi)}_i q_i \in \R[x]_t/I_{t}$. We write
\[
0 = (c^{(\psi)})^T \hat{M}_t(\hat{\mom}^{\nu}) c^{(\psi)} = \int \psi^2 + k \di \nu = 0, \;\; \text{for some}\;\; k\in \Ip.
\]
Since $k$ vanishes on $X$, we have that $\psi = 0$ on $X$, and thus $c^{(\psi)} = 0$. By Assumption \ref{hyp:ball}, we have that $g_1>0$ on $V_{\R}(\Ip)$, and so $\hat{M}_{t-t_{g_1}}(g_1 \hat{\mom}^{\nu}) \succ 0$ follows by a similar computation.

Thus, the Slater condition is satisfied by the primal \eqref{eq:reduced_primal}, so the strong duality of semidefinite programs implies that the dual problem attains its optimal value and $\primal_t^{\red}=
\dual_t^{\red}$. So, by the above, we have that $\primal_t=
\dual_t$.

It remains to show the attainment of the optimal value $\dual_t$. For this we need to show that the optimal solution to the program \eqref{eq:reduced_dual} (the one which attains the value $\dual_t^{\red}$) also gives a solution to the unreduced problem. This was already shown in the proof of Proposition \ref{pr:reduced_programs}, yielding the desired result. 
\end{proof}

\begin{remark}
The proof above strengthens the result of Proposition \ref{pr:reduced_programs}, in that it shows that $\primal_t = \primal_t^{\red}$. This equality can be shown directly (without referring to the dual program) by taking a feasible $\mom \in \R^{\N^n_{2t}}$, and showing that there exists $\hat{\mom}\in \R^{\mathcal{B}_{2t}}$ such that $\mom = U_{2t}^T \hat{\mom}$. This can be shown using the fact that $\mom$ is feasible and hence satisfies the equality constraints in \eqref{eq:gmp_relax}.
\end{remark}

\subsection{Algebraic properties of products of spheres}\label{ap:spheres_alg}

In Subsection \ref{ssec:sphs}, we apply Theorem \ref{th:attainSOS} to the case of the product of spheres. To do this, we need the following two statements, which fulfill the assumptions needed. The following lemma proves the real radicality assumption of Theorem \ref{th:attainSOS}. We work with the notation introduced in Subsection \ref{ssec:sphs}. 

\begin{lemma}\label{le:realrad}
The ideal 
\[I(\bm{\varphi}_s) = (\varphi_1,\dots,\varphi_s) =\big(1-\|x_1\|^2, \dots, 1-\|x_s\|^2\big)\] in $\R[x_1,\dots,x_s]$ is real radical.
\end{lemma}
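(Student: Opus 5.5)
We prove that the ideal $I(\bm{\varphi}_s)$ is real radical via the standard criterion: an ideal $J \subseteq \R[x]$ is real radical if $J$ is radical and every irreducible component of its complex variety has a Zariski dense set of real points. More precisely, if $J$ is prime and $V_{\R}(J)$ contains a nonsingular point of $V_{\C}(J)$ at which the local dimension equals $\dim V_{\C}(J)$, then $J$ is real radical.

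The first step is primality. We claim that $J=(\varphi_1,\dots,\varphi_s)$ is prime. The quotient is
\[
\R[x]/J \cong \R[x_1]/(\varphi_1)\otimes_{\R}\cdots\otimes_{\R}\R[x_s]/(\varphi_s).
\]
Each factor $\R[x_l]/(1-\|x_l\|^2)$ is a domain, since $1-\|x_l\|^2$ is irreducible over $\C$ when $n_l\ge 2$. This holds because the quadratic form $\|x_l\|^2$ has rank $n_l\ge 2$, so $1-\|x_l\|^2$ is a nondegenerate affine quadric. Primality of the tensor product is the delicate point, because tensor products of domains over a non-algebraically closed field need not be domains. We therefore argue over $\C$: each $\C[x_l]/(\varphi_l)$ is a domain, and a tensor product of domains that are finitely generated over an algebraically closed field is again a domain. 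Then $\R[x]/J$ embeds into $\C[x]/J\C[x]$, which is a domain, so $J$ is prime.

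The second step is to find a smooth real point. The complex variety $V_{\C}(J)$ is the product of the complex quadrics $\{\|x_l\|^2=1\}$, each of dimension $n_l-1$. Its Jacobian has block-diagonal rows $-2x_l^T$, which have full rank $s$ at every point of $V_{\C}(J)$, because $x_l\neq 0$ whenever $\|x_l\|^2=1$. Consequently, every real point of $X=\mathbb{S}^{n_1-1}\times\cdots\times\mathbb{S}^{n_s-1}$ is a nonsingular point of the irreducible variety $V_{\C}(J)$, which has codimension $s$. By the implicit function theorem, $X$ is a real manifold of dimension $N-s=\dim_{\C}V_{\C}(J)$.

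The final step deduces real radicality. Suppose $k$ vanishes on $X$. The Zariski closure of $X$ in $\C^N$ is an irreducible subvariety of $V_{\C}(J)$. It has dimension $N-s$ because it contains a real-analytic manifold of that dimension, so it equals $V_{\C}(J)$. Hence $k$ vanishes on $V_{\C}(J)$. By Hilbert's Nullstellensatz, $k\in\sqrt{J\C[x]}=J\C[x]$, and then $k\in J$ because $k$ has real coefficients. The Realnullstellensatz then gives $\sqrt[\R]{J}=\mathcal{I}(V_{\R}(J))=J$.

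The main obstacle is the dimension/density step, namely that a real manifold of dimension $d$ inside an irreducible complex variety of dimension $d$ is Zariski dense. If a cleaner route is preferred, one can cite the simple point criterion directly; see, e.g., Bochnak--Coste--Roy, Theorem 4.5.1.
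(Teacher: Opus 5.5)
Your argument is correct, but it takes a different route from the paper's proof.

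\textbf{The paper's route.} The paper argues by induction on $s$. Only the base case $s=1$ uses geometric input: the quadric $1-\|x_1\|^2$ is irreducible and has a smooth real point, so its real points are Zariski dense by the cited result of Sottile. The inductive step is a fiberwise argument. It fixes a point of $\mathbb{S}^{n_1-1}\times\cdots\times\mathbb{S}^{n_{s-1}-1}$, divides the restricted polynomial by $1-\|x_s\|^2$, and asserts that the quotient depends polynomially on the fixed point. It then applies the induction hypothesis in the remaining variables.

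\textbf{Your route.} You treat the whole product at once:
\begin{enumerate}[(i)]
\item primality of $J$, via the tensor product of the domains $\C[x_l]/(\varphi_l)$ over $\C$ and the embedding $\R[x]/J\hookrightarrow\C[x]/J\C[x]$;
\item smoothness of every point of $V_\C(J)$, from the block-diagonal Jacobian;
\item a single application of the density, or simple-point, criterion, equivalently BCR Theorem 4.5.1 with $d=\dim \R[x]/J=N-s$.
\end{enumerate}

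\textbf{Trade-offs.} Your route is shorter. It also sidesteps the least rigorous step of the paper's induction, the claimed polynomial dependence of the quotients; that step would be cleanest done by division by $\varphi_s$ with coefficients in $\R[x_1,\dots,x_{s-1}]$. The cost is two extra standard facts. First, tensor products of finitely generated domains over an algebraically closed field are domains. Second, a $d$-dimensional real analytic manifold has complex Zariski closure of dimension at least $d$. The paper needs the second fact only for a single hypersurface.

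\textbf{Two small points.}
\begin{itemize}
\item Irreducibility of $1-\|x_l\|^2$ over $\C$ is best justified by the rank $n_l+1\ge 3$ of the homogenized form $x_0^2-\|x_l\|^2$. The rank of the quadratic part alone is not the right invariant.
\item You never need irreducibility of the Zariski closure of $X$, which you assert without proof. Any closed subset of the irreducible $V_\C(J)$ of dimension $N-s$ already equals $V_\C(J)$.
\end{itemize}
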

\begin{proof}
For ease of notation, we set $I(\bm{\varphi}_s) = J_s$. We proceed by induction on $s$. For the case $s=1$, we have that $J_1=(1-\|x_1\|^2)$ is a prime ideal, since the polynomial $1-\|x_1\|^2$ is irreducible. Hence, it is radical, i.e., $\mathcal{I}\big(V_{\C}(J_1)\big) = J_1$. Further, we have that $V_{\C}(J_1)$ contains a real smooth point, and so the real vanishing set of $J_1$, $V_{\R}(J_1)$, is Zariski dense in the complex vanishing set, see \cite[Theorem 5.1]{sottile2019realalggeo}. This immediately implies that $\mathcal{I}\big(V_{\C}(J_1)\big) = \mathcal{I}\big(V_{\R}(J_1)\big)$. By the Realnullstellensatz and the radicalness of $J_1$, the result follows. For details, see \cite[Section 3]{schweighofer2022realalggeo}.

For the inductive step, consider $k\in \R[x_1,\dots,x_s]$ such that $k$ vanishes on the product of spheres $X$. We write 
\[
k(x_1,\dots,x_s) = \sum_{\alpha\in \N^{n_s}} \phi_{\alpha}(x_1,\dots,x_{s-1})x_s^{\alpha},
\]
for $\phi_\alpha\in \R[x_1,\dots,x_{s-1}]$. Fix $(\hat{x}_1,\dots,\hat{x}_{s-1}) \in \mathbb{S}^{n_1-1}\times \dots \times \mathbb{S}^{n_{s-1}-1}$, so that
\[
\hat{k}(x_s) = k(\hat{x}_1,\dots,\hat{x}_{s-1},x_s) = \sum_{\alpha\in \N^{n_s}} \phi_{\alpha}(\hat{x}_1,\dots,\hat{x}_{s-1})x_s^{\alpha},
\]
is an element of $\R[x_s]$, which vanishes on $\mathbb{S}^{n_s-1}$. By the argument above, we have that the ideal $I = (1-\|x_s\|^2)\subset \R[x_s]$ is real radical. Thus $\hat{k}\in I$, and $\hat{k}(x_s) = (1-\|x_s\|^2)\theta_{\hat{x}_1\dots\hat{x}_{s-1}}(x_s)$, for some $\theta_{\hat{x}_1\dots\hat{x}_{s-1}}\in \R[x_s]$. Setting $\theta_{\hat{x}_1\dots\hat{x}_{s-1}}(x_s) = \sum_{\beta\in \N^{n_s}} \theta_{\hat{x}_1\dots\hat{x}_{s-1},\beta}\,x_s^{\beta}$, we can write
\[
\begin{aligned}
\hat{k}(x_s) &= (1-\|x_s\|^2)\theta_{\hat{x}_1\dots\hat{x}_{s-1}}(x_s) \\
&= (1-\|x_s\|^2) \sum_{\beta\in \N^{n_s}} \theta_{\hat{x}_1\dots\hat{x}_{s-1},\beta}\,x_s^{\beta} = \sum_{\alpha\in \N^{n_s}} \phi_{\alpha}(\hat{x}_1,\dots,\hat{x}_{s-1})\,x_s^{\alpha}.
\end{aligned}
\]
Expanding the left hand side, gives a sum over $\beta\in \N^{n_s}$, where the coefficients of the $x^{\beta}$'s are simple linear combinations of the $\theta_{\hat{x}_1\dots\hat{x}_{s-1},\beta}$'s. So by comparing coefficients, we obtain a system of equations, which, when solved, shows that the $\theta_{\hat{x}_1\dots\hat{x}_{s-1},\beta}$'s, have polynomial dependence on  $\hat{x}_1,\dots,\hat{x}_{s-1}$. Thus, we can write $\theta_{\hat{x}_1\dots\hat{x}_{s-1}}(x_s) = q_s(\hat{x}_1,\dots,\hat{x}_{s-1},x_s)$ which gives,
\[
k(\hat{x}_1,\dots,\hat{x}_{s-1}, x_s) = (1-\|x_s\|^2) q_s(\hat{x}_1,\dots,\hat{x}_{s-1},x_s),
\]
for $q_s\in \R[x_1,\dots,x_s]$. Now for a fixed $\hat{x}_s\in \R^{n_s}$, we have that the polynomial,
\begin{equation}\label{eq:radical_polynomial}
x_1,\dots, x_{s-1} \longmapsto k(x_1,\dots,x_{s-1}, \hat{x}_s) - (1-\|\hat{x}_s\|^2) q_s(x_1,\dots,x_{s-1},\hat{x}_s)
\end{equation}
vanishes on the set $\mathbb{S}^{n_1-1}\times \dots \times \mathbb{S}^{n_{s-1}}$. By the inductive hypothesis, the polynomial described in \eqref{eq:radical_polynomial} is in the ideal $J_{s-1}=\big(1-\|x_1\|^2, \dots, 1-\|x_{s-1}\|^2\big)$. So we have that 
\[
\begin{aligned}
k(x_1,\dots,x_{s-1}, \hat{x}_s) &- (1-\|\hat{x}_s\|^2) q_s(x_1,\dots,x_{s-1},\hat{x}_s) \\
&= \sum_{i=1}^{s-1} (1-\|x_i\|^2) q_{\hat{x}_s, i}(x_1,\dots,x_{s-1}),
\end{aligned}
\]
with $q_{\hat{x}_s, i} \in \R[x_1,\dots,x_{s-1}]$, for $i=1,\dots,s-1$. By a similar argument as above, we have that the $q_{\hat{x}_s, i}$'s have polynomial dependence on $\hat{x}_s$, i.e., $q_{\hat{x}_s, i} (x_1,\dots,x_{s-1}) = q_i(x_1,\dots,x_{s-1},\hat{x}_s)$, for any $\hat{x}_s\in \mathbb{S}^{n_s-1}$. This shows that $k\in J_s$ and proves the claim.
\end{proof}

The next lemma proves the Gröbner assumption of Theorem \ref{th:attainSOS} for the generating set $\{\varphi_1,\dots,\varphi_s\}$. For this we introduce the notion of coprime monomials. We say two monomials $x^\alpha$ and $x^\beta$, with $\alpha,\beta \in \mathcal{N}$, are \emph{coprime}, if their greatest common divisor is one.

\begin{lemma}\label{le:grobner_basis}
The set of polynomials $\Gb = \{\varphi_1,\dots,\varphi_s\}\subset \R[x]$, with $\varphi_i = 1 - \|x_i\|^2$, form a Gröbner basis of the ideal $J_s = (\varphi_1,\dots,\varphi_s)$.
\end{lemma}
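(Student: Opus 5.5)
The plan is to use Buchberger's criterion with the standard coprime-leading-terms refinement: if the leading monomials of $f$ and $g$ are coprime, then the S-polynomial $S(f,g)$ reduces to zero modulo $\{f,g\}$ (see \cite[Chapter 2, \S9, Proposition 4]{cox_ideals_2015}). Since $\Gb$ generates $J_s$ by definition, it is a Gröbner basis as soon as every S-polynomial $S(\varphi_i,\varphi_j)$ with $i\neq j$ reduces to zero modulo $\Gb$. So it suffices to show that the leading terms $\LT(\varphi_i)$ are pairwise coprime.

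First, I identify the leading terms with respect to $<_{\grlex}$. Each $\varphi_l = 1 - \sum_{k=1}^{n_l} x_{lk}^2$ has degree $2$, and its degree-$2$ part is $-\sum_k x_{lk}^2$. The lexicographic tie-break among the squares gives $\LT(\varphi_l) = -x_{l1}^2$, using the variable ordering $x_{11} >_{\lex} x_{12} > \cdots$ inherited from $x=(x_1,\dots,x_s)$. The precise choice of the largest square does not matter. All that matters is that $\LT(\varphi_l)$ is a power of a variable belonging to the block $x_l$.

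Second, the blocks $x_1,\dots,x_s$ consist of disjoint sets of variables. Hence for $i\neq j$ the monomials $x_{i1}^2$ and $x_{j1}^2$ share no variable, so their greatest common divisor is $1$ and they are coprime.

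Finally, I invoke the coprime criterion and conclude. For completeness I can verify the reduction directly. We have
\[
S(\varphi_i,\varphi_j) = x_{j1}^2\varphi_i - x_{i1}^2\varphi_j,
\]
up to sign normalization. Writing $\varphi_i = -x_{i1}^2 + r_i$, this equals $x_{j1}^2 r_i - x_{i1}^2 r_j$, which can be rewritten as $-r_j\varphi_i + r_i\varphi_j$. This is a standard representation with multidegrees below $\mathrm{lcm}(\LT(\varphi_i),\LT(\varphi_j))$, so it reduces to zero. I expect no real obstacle here. The only care needed is to confirm the leading terms under the fixed ordering, together with the trivial case $s=1$, where the single generator is automatically a Gröbner basis of its principal ideal.
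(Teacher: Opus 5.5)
Your proposal is correct and follows the paper's own argument. You identify $\LT(\varphi_l)$ as a scalar multiple of $x_{l1}^2$ under $<_{\grlex}$, note that these monomials are pairwise coprime because the variable blocks are disjoint, and invoke the coprime-leading-terms criterion; the paper cites \cite[Corollary 2.5.10]{kreuzer_compalg_2008} for this criterion rather than \cite{cox_ideals_2015}. One harmless sign slip in your optional direct check: $x_{j1}^2 r_i - x_{i1}^2 r_j$ equals $r_j\varphi_i - r_i\varphi_j$, not its negative, which does not affect the lcm-representation or the reduction to zero.
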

\begin{proof}
We use the fact that, if the leading terms of the polynomials $\varphi_1,\dots,\varphi_s$ are pairwise coprime, then $\Gb$ is a Gröbner basis for $J_s$. See \cite[Corollary 2.5.10]{kreuzer_compalg_2008}. Since $\LT(\varphi_i) = x_{i1}^2$, for each $i=1,\dots,s$, this property is satisfied.
\end{proof}

From Subsection \ref{ap:algred}, recall the notion of a basis $\mathcal{B}_t$ of the truncated quotient $\R[x]_t/(J_s)_t$. To close, we give an example of such a basis for the case of the product of spheres, where the elements of $\mathcal{B}_t$ are monomials. This is adapted from the proof of \cite[Lemma 1]{henrion2012innerapprox}.

\begin{remark}\label{rk:spheres_basis}
Recall that $\mathcal{N} = \N^{n_1+\dots + n_s}$ and $\mathcal{N}_t = \N_t^{n_1+\dots + n_s}$. 
By Lemma \ref{le:grobner_basis}, the set of polynomials $\Gb = \{\varphi_1,\dots,\varphi_s\}$, with $\varphi_i = 1 - \|x_i\|^2$, form a Gröbner basis of the ideal $J_s = (\varphi_1,\dots,\varphi_s)$. 
In addition, $\LT(\varphi_i) = x_{i1}^2$, for each $i=1,\dots,s$. 
By the property of Gröbner bases, one has $\LT(J_s) = \LT(\Gb)$, so $\mathcal{B} = \big\{x^{\alpha} \; | \; \alpha=(\alpha_1, \dots, \alpha_s) \in \mathcal{N}, \; \alpha_{i 1} \leq 1, \; \text{for each}\; i=1,\dots,s \big\}$. 
Then one has 
$\mathcal{B}_t = \big\{x^{\alpha} \; | \; \alpha=(\alpha_1, \dots, \alpha_s) \in \mathcal{N}_t, \; \alpha_{i 1} \leq 1, \; \text{for each}\; i=1,\dots,s \big\}$. 
\end{remark}


\end{document}